\newtheorem{Th}{Theorem}
\newtheorem{Lem}{Lemma}
\begin{document}

\thispagestyle{empty}

\title[Nonlinear resonance in oscillatory systems with decaying perturbations]{Nonlinear resonance in oscillatory systems with decaying perturbations}

\author[O.A. Sultanov]{Oskar A. Sultanov}

\address{
%Chebyshev Laboratory, St. Petersburg State University, 14th Line V.O., 29, Saint Petersburg 199178 Russia;\\
Institute of Mathematics, Ufa Federal Research Centre, Russian Academy of Sciences, Chernyshevsky street, 112, Ufa 450008 Russia.}
\email{oasultanov@gmail.com}

%\thanks{\it \today}

\maketitle

{\small
\begin{quote}
\noindent{\bf Abstract.} 
Time-decaying perturbations of nonlinear oscillatory systems in the plane are considered. It is assumed that the unperturbed systems are non-isochronous and the perturbations oscillate with an asymptotically constant frequency. Resonance effects and long-term asymptotic regimes for solutions are investigated. In particular, the emergence of stable states close to periodic ones is discussed. By combining the averaging technique and stability analysis, the conditions on perturbations are described that guarantee the existence and stability of the phase-locking regime with a resonant amplitude. The results obtained are applied to the perturbed Duffing oscillator.
 \medskip

\noindent{\bf Keywords: }{Nonlinear oscillator, decaying perturbation, nonlinear resonance,
phase locking, averaging, stability, asymptotic behaviour}

\medskip
\noindent{\bf Mathematics Subject Classification: }{34C15, 34C29, 34D20, 34E10}
%	34C15  	Nonlinear oscillations and coupled oscillators for ordinary differential equations
%	34C29  	Averaging method for ordinary differential equations
%	34E10  	Perturbations, asymptotics of solutions to ordinary differential equations
% 34D20  	Stability of solutions to ordinary differential equations

\end{quote}
}
{\small

\section*{Introduction}
Perturbation of nonlinear oscillatory systems is a classical problem with a wide range of applications~\cite{GH83,AF06}. In this paper, time-decaying perturbations are considered and a class of asymptotically autonomous systems in the plane is investigated. Note that asymptotically autonomous systems arise, for example, when studying steady-state modes in multidimensional problems by reducing the dimension~\cite{CCT95,DS22}, when constructing the asymptotics of strongly nonlinear non-autonomous systems by isolating growing terms of solutions~\cite{BG08,KF13}, and in various problems with time-dependent damping~\cite{SFR19,JM23}.

Qualitative properties of asymptotically autonomous systems have been studied in many papers~\cite{LM56,LRS02,KS05,MR08,OS22Non}. In particular, it follows from~\cite{LDP74} that under certain conditions, time-decaying perturbations may not disturb the global dynamics of oscillatory systems. However, in the general case, the dynamics of perturbed and unperturbed systems can differ significantly~\cite{HT94,OS21IJBC}. 

This paper studies the effect of damped oscillatory perturbations with an asymptotically constant frequency on non-isochronous systems. Note that similar problems has been considered in several papers. In particular, the asymptotic analysis of linear systems with damped oscillatory perturbations was discussed in~\cite{HL75,MP85,PN07,BN10}. The asymptotic behaviour of solutions to nonlinear equations in the vicinity of the equilibrium was investigated in~\cite{OS23JMS}. Bifurcations in such systems related to the stability of the equilibrium were discussed in~\cite{OS21DCDS,OS21JMS}. To the best of the author’s knowledge, the influence of such perturbations on the behaviour of nonlinear systems far from equilibrium has not yet been discussed. This is the subject of the present paper. In particular, we study the emergence of near-periodic stable states due to resonance phenomena with damped oscillatory perturbations. Note also that similar effects in the problems with a small parameter are usually associated with nonlinear resonance and are considered to be well studied~\cite{BVC79,SUZ88,Sos97,JCRS11}. However, in this paper, the presence of a small parameter is not assumed. We discuss the role of time-decaying perturbations in the emergence and stability of long-term asymptotics regimes.

The paper is organized as follows. Section~\ref{sec1} provides the statement of the problem and a motivating example. The main results are presented in Section~\ref{sec2}. The justification is contained in subsequent sections. First, in Section~\ref{sec3}, we construct a near-identity transformation averaging the system in the first asymptotic terms. Section~\ref{sec4} analyses the truncated system obtained from the full system by dropping the remainder terms and describes possible asymptotic regimes. Section~\ref{sec5} discusses the persistence of these regimes in the full system by constructing Lyapunov functions. In Section~\ref{sEx}, the proposed theory is applied to examples of asymptotically autonomous systems. The paper concludes with a brief discussion of the results obtained.

\section{Problem statement}\label{sec1}
 Consider a system of two differential equations
\begin{gather}\label{PS}
\frac{dr}{dt}=f(r,\varphi,S(t),t), \quad \frac{d\varphi}{dt}=\omega(r)+g(r,\varphi,S(t),t),
\end{gather}
where the functions $\omega(r)>0$, $f(r,\varphi,S,t)$ and $g(r,\varphi,S,t)$ are infinitely differentiable, defined for all $|r|\leq \mathcal R={\hbox{\rm const}}$, $(\varphi,S)\in\mathbb R^2$, $t>0$, and are $2\pi$-periodic with respect to $\varphi$ and $S$, $\omega'(r)\not\equiv 0$. The functions $f(r,\varphi,S,t)$ and $g(r,\varphi,S,t)$ play the role of perturbations of the autonomous system 
\begin{gather}\label{US}
\frac{d\hat r}{dt}=0, \quad \frac{d\hat \varphi}{dt}=\omega(\hat r),
\end{gather}
describing non-isochronous oscillations on the plane $(x,y)=(\hat r\cos\hat \varphi,-\hat r\sin\hat \varphi)$ with a constant amplitude $\hat r(t)\equiv r_0$, $|r_0|< \mathcal R$ and a natural frequency $\omega(r_0)$. The solutions $r(t)$ and $\varphi(t)$ of system \eqref{PS} corresponds to the amplitude and the phase of the perturbed oscillations. 

It is assumed that the frequency of perturbations is asymptotically constant: $S'(t)\sim s_0$ as $t\to\infty$ with $s_0={\hbox{\rm const}}>0$, and the intensity decays with time: for each fixed $r$ and $\varphi$
\begin{gather*}
f(r,\varphi,S(t),t)\to 0, \quad g(r,\varphi,S(t),t)\to 0, \quad t\to\infty.
\end{gather*}
In this case, the perturbed system \eqref{PS} is asymptotically autonomous with the limiting system \eqref{US}. The goal of the paper is to study the resonant effects of perturbations $f(r,\varphi,S(t),t)$ and $g(r,\varphi,S(t),t)$ on the dynamics far from the equilibrium of the limiting system and to describe possible asymptotic regimes for solutions.

Let us specify the considered class of perturbations. We assume that 
\begin{gather}\label{fgas}\begin{split}
&f(r,\varphi,S,t)\sim  \sum_{j=1}^\infty t^{-\frac{j}{q}} f_j(r,\varphi,S), \\ 
&g(r,\varphi,S,t)\sim  \sum_{j=1}^\infty t^{-\frac{j}{q}} g_j(r,\varphi,S), \quad 
t\to\infty,
\end{split}
\end{gather}
for all $|r|< \mathcal R$ and $(\varphi,S)\in\mathbb R^2$, where $f_j(r,\varphi,S)$ and $g_j(r,\varphi,S)$ are $2\pi$-periodic with respect to $\varphi$ and $S$, and $q\in\mathbb Z_+$. 
The phase of perturbations is considered in the form
\begin{gather}\label{Sform}
S(t)=s_0 t + \sum_{j=1}^{q-1} s_j t^{1-\frac{j}{q}}+s_q \log t,
\end{gather}
where $s_j={\hbox{\rm const}}$. Moreover, it is assumed that there exist $0<|a|< \mathcal R$ and coprime integers $\kappa,\varkappa\in\mathbb Z_+$ such that the resonant condition holds:
\begin{gather}\label{rc}
	\kappa s_0=\varkappa\omega(a), \quad \eta:=\omega'(a)\neq 0.
\end{gather}
Note that the series in \eqref{fgas} are asymptotic as $t\to\infty$, and for all $N\geq 1$ the following estimates hold: $f(r,\varphi,S,t)-\sum_{j=0}^{N-1}t^{-j/q} f_j(r,\varphi,S)=\mathcal O(t^{-N/q})$ and $g(r,\varphi,S,t)-\sum_{j=0}^{N-1}t^{-j/q} g_j(r,\varphi,S)=\mathcal O(t^{-N/q})$ as $t\to\infty$ uniformly for all $|r|\leq \mathcal R$ and $(\varphi,S)\in\mathbb R^2$. Note also that instead of power functions one could consider another asymptotic scale, but in this case the calculations would be more complex and cumbersome.

Consider the example
\begin{gather}\label{Ex0}
\frac{dx}{dt}=y, \quad 
\frac{dy}{dt}=-x+\vartheta x^3+ t^{-\frac{1}{2}} Z(x,y,S(t)),
\end{gather}
where  $Z(x,y,S)\equiv \alpha(S)x+\beta(S)y$, $\alpha(S)\equiv \alpha_0+\alpha_1 \sin S$, $\beta(S)\equiv \beta_0+ \beta_1\sin S$, $S(t)\equiv 3 t/2$ with parameters $\alpha_i,\beta_i,\in\mathbb R$ and $\vartheta>0$. Let us show that this system corresponds to \eqref{PS}. The limiting system $d\hat x/dt=\hat y$, $d\hat y/dt=- U'(\hat x)$ with $U(x)\equiv x^2/2-\vartheta x^4/4$ has a stable equilibrium at $(0,0)$, and the level lines $\{(x,y)\in\mathbb R: U(x)+y^2/2=r^2/2\}$ for all $0<|r|< (2\vartheta)^{-1/2}$ correspond to $T(r)$-periodic solutions 
\begin{gather}
\nonumber 
\hat x_0(t,r)\equiv r\,{\hbox{\rm sn}}\left(\frac{t}{\sqrt{k_r^2+1}},k_r\right)\sqrt{k_r^2+1}, \quad 
\hat y_0(t,r)\equiv r\,{\hbox{\rm cn}}\left(\frac{t}{\sqrt{k_r^2+1}},k_r\right){\hbox{\rm dn}}\left(\frac{t}{\sqrt{k_r^2+1}},k_r\right), \\
\label{omegaeq} 
T(r)\equiv 4K(k_r)\sqrt{k_r^2+1}, \quad 
\omega(r)\equiv \frac{2\pi}{T(r)},
\end{gather}
where ${\hbox{\rm sn}}(u,k)$, ${\hbox{\rm cn}}(u,k)$, ${\hbox{\rm dn}}(u,k)$ are the Jacoby elliptic functions, $K(k)$ is the complete elliptic integral of the first kind, and $k_r\in (0,1)$ is a root of $(k_r+k_r^{-1})^{-2}=\vartheta  r^2/2$.
%2\int\limits_{x_-(r)}^{x_+(r)}\frac{dz}{\sqrt{r^2-z^2+\vartheta z^4/2}}, \quad 
%x_\pm(r)=\pm\sqrt{\frac{1-\sqrt{1-2\vartheta r^2}}{\vartheta}}.
Define auxiliary $2\pi$-periodic functions 
\begin{gather*}
X(\varphi,r)\equiv \hat x_0 \left(\frac{\varphi}{\omega(r)},r\right), \quad
Y(\varphi,r)\equiv \hat y_0 \left(\frac{\varphi}{\omega(r)},r\right).
\end{gather*}
It can easily be checked that $\omega(r)\partial_\varphi X=Y$, $\omega(r)\partial_\varphi Y=-U(X)$, $U(X)+Y^2/2=r^2/2$ and 
\begin{gather*}
{\hbox{\rm det}}\frac{\partial (X,Y)}{\partial (\varphi,r)}\equiv \begin{vmatrix} \partial_\varphi X & \partial_\varphi Y\\ \partial_r X & \partial_r Y \end{vmatrix} \equiv \frac{r}{\omega(r)}.
\end{gather*}
Thus, system \eqref{Ex0} in the variables $(r,\varphi)$ takes the form \eqref{PS} with $q=2$, $s_0=3/2$, $s_i=0$,
\begin{gather}\label{fgex0}
f(r,\varphi,S,t)\equiv t^{-\frac{1}{2}}f_1(r,\varphi,S), \quad 
g(r,\varphi,S,t)\equiv t^{-\frac{1}{2}}g_1(r,\varphi,S),
\end{gather}
where
\begin{align*}
 f_1(r,\varphi,S)&\equiv  r^{-1}Y(\varphi,r)Z(X(\varphi,r),Y(\varphi,r),S), \\ 
 g_1(r,\varphi,S)&\equiv - r^{-1}\omega(r)\partial_r X(\varphi,r)Z(X(\varphi,r),Y(\varphi,r),S).
\end{align*}
Note that $0<\omega(r)<1$ for all $0<|r|<(2\vartheta)^{-1/2}$. Hence, there exist $\kappa$, $\varkappa\in\mathbb Z_+$ and $0<|r|<(2\vartheta)^{-1/2}$ such that the condition \eqref{rc} holds. If $Z(x,y,S)\equiv0$, then $r(t)\equiv r_0$ and $\varphi(t)\equiv \omega(r_0)t+\phi_0$ with arbitrary constants $r_0$ and $\phi_0$. In the absence of the oscillatory part of the perturbation ($\alpha_1=\beta_1=0$), the amplitude of the solutions may tend to zero or to infinity, depending on the sign of $\beta_0$ (see~Fig.~\ref{FigEx0}, a). Under some conditions on the parameters, this qualitative behaviour can be preserved in the system with the oscillating perturbations (see~Fig.~\ref{FigEx0}, b), or violated with the appearance of new attracting states (see~Fig.~\ref{FigEx0}, c). The paper discusses the conditions that guarantee the existence and stability of such states in perturbed systems of the form \eqref{PS} with perturbations satisfying \eqref{fgas} and \eqref{Sform}.
\begin{figure}
\centering
\subfigure[$\alpha_1=0$]{
\includegraphics[width=0.3\linewidth]{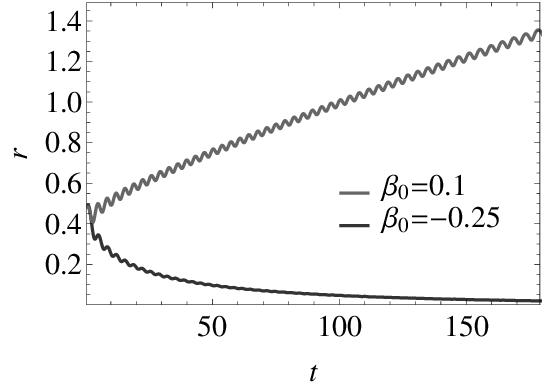}
}
\hspace{1ex}
 \subfigure[$\alpha_1=0.4$]{
 \includegraphics[width=0.3\linewidth]{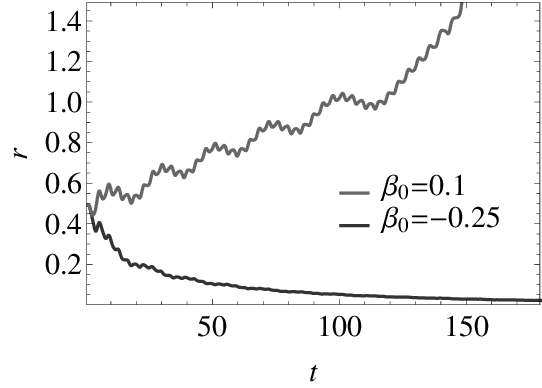}
}
\hspace{1ex}
\subfigure[$\alpha_1=0.6$]{
 \includegraphics[width=0.3\linewidth]{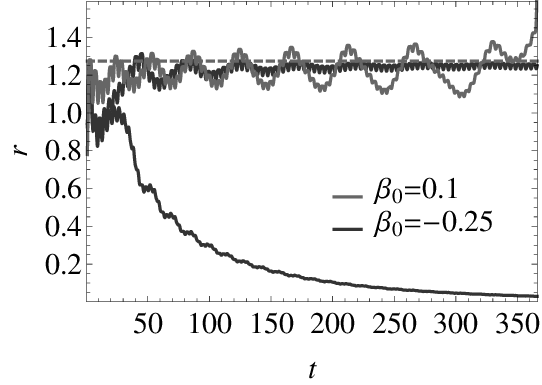}
}
\caption{\small The evolution of $r(t)=\sqrt{2U(x(t))+y^2(t)}$ for solutions of system \eqref{Ex0} with $\vartheta=1/4$, $\alpha_0=0.5$, $\beta_1=0$ and different values of the parameters $\alpha_1$, $\beta_0$. The dashed curve corresponds to $r(t)\equiv 1.27$.} \label{FigEx0}
\end{figure}

\section{Main results}\label{sec2}

Define the domain
\begin{align*}
&\mathcal D^\epsilon_{\varsigma,\tau}:=\{(R,\Psi,t)\in\mathbb R^3: |R t^{-\frac{1-\varsigma}{2q}}+a|< \mathcal R - \epsilon t^{-\frac{1-\varsigma}{2q}}, \ \  t\geq \tau \}
\end{align*} 
with some $\varsigma\in (0,1)$, $\epsilon\geq 0$ and $\tau>0$.  Let the angle brackets denote averaging of any function $F(S)$ over $S$ for the period $2\pi\varkappa$,
\begin{gather*}
\langle F(S)\rangle_{\varkappa S}\equiv \frac{1}{2\pi\varkappa}\int\limits_0^{2\pi\varkappa} F(S)\,dS.
\end{gather*}
Then, we have the following:
\begin{Th}\label{Th1}
Let system \eqref{PS} satisfy \eqref{fgas}, \eqref{Sform} and \eqref{rc}. Then for all $N\in\mathbb Z_+$, $\varsigma\in (0,1)$ and $\epsilon\in(0,\mathcal R)$ there exist $t_0\geq 1$ and the transformations $(r,\varphi)\mapsto (R,\Psi)\mapsto (\rho,\psi)$,
\begin{gather}
\label{ch1} r(t)=a+t^{-\frac{1}{2q}} R(t), \quad \varphi(t)=\frac{\kappa}{\varkappa}S(t)+\Psi(t), \\
\label{ch2} \rho(t)=R(t)+\tilde \rho_N(R(t),\Psi(t),t), \quad \psi(t)=\Psi(t)+\tilde \psi_N(R(t),\Psi(t),t),
\end{gather}
where $\tilde \rho_N(R,\Psi,t)$, $\tilde \psi_N(R,\Psi,t)$ are $2\pi$-periodic in $\Psi$ and satisfy the inequalities
\begin{gather*}
|\tilde \rho_N(R,\Psi,t)|\leq \epsilon, \quad |\tilde \psi_N(R,\Psi,t)|\leq \epsilon, \quad (R,\Psi,t)\in\mathcal D^0_{\varsigma,t_0},
\end{gather*}
such that for all $0<|r|< \mathcal R$ and $\varphi\in\mathbb R$ system \eqref{PS} can be transformed into
\begin{gather}\label{rhopsi}
\frac{d\rho}{dt}=\Lambda_{N}(\rho,\psi,S(t),t), \quad 
\frac{d\psi}{dt}=\Omega_N(\rho,\psi,S(t),t),
\end{gather}
with $\Lambda_{N}(\rho,\psi,S,t)\equiv \hat\Lambda_N(\rho,\psi,t)+ \tilde\Lambda_N(\rho,\psi,S,t)$, $\Omega_{N}(\rho,\psi,S,t)\equiv \hat\Omega_N(\rho,\psi,t)+\tilde\Omega_N(\rho,\psi,S,t)$, defined for all $(\rho,\psi,t)\in\mathcal D^\epsilon_{\varsigma,t_0}$ and $S\in\mathbb R$, such that
\begin{gather}
\label{tildeLON}
\hat\Lambda_N(\rho,\psi,t)\equiv \sum_{k=1}^N t^{-\frac{k}{2q}}\Lambda_k(\rho,\psi), \quad 
\hat\Omega_N(\rho,\psi,t)\equiv \sum_{k=1}^N t^{-\frac{k}{2q}}\Omega_k(\rho,\psi),\\
\label{tildeLO}\tilde\Lambda_N(\rho,\psi,S,t)=\mathcal O(t^{-\frac{N+1}{2q}}), \quad
\tilde\Omega_N(\rho,\psi,S,t)=\mathcal O(t^{-\frac{N+1}{2q}})
\end{gather}
as $t\to\infty$ uniformly for all $|\rho|<\infty$ and $(\psi,S)\in\mathbb R^2$, where $\Lambda_k(\rho,\psi)$ and  $\Omega_k(\rho,\psi)$ are polynomials in $\rho$ of degree $k-1$ and $k$, respectively. 
In particular, $\Lambda_1(\rho,\psi)\equiv \langle f_1(a,\kappa S/\varkappa+\psi,S)\rangle_{\varkappa S}$ and $\Omega_1(\rho,\psi)\equiv \eta\rho$. Moreover,   
\begin{gather}\label{LOas}
\begin{split}
	&\tilde \Lambda_N(\rho,\psi,S,t)\sim  \sum_{k=N+1}^\infty t^{-\frac{k}{2q}}\tilde \Lambda_{N,k}(\rho,\psi,S), \\ 
	&\tilde \Omega_N(\rho,\psi,S,t) \sim \sum_{k=N+1}^\infty t^{-\frac{k}{2q}} \tilde \Omega_{N,k}(\rho,\psi,S), \quad t\to\infty,
	\end{split}
\end{gather}
where $\tilde \Lambda_{N,k}(\rho,\psi,S)=\mathcal O(\rho^{k-1})$ and $\tilde \Omega_{N,k}(\rho,\psi,S)=\mathcal O(\rho^{k})$ as $\rho\to\infty$ uniformly for all $(\psi,S)\in\mathbb R^2$. 
\end{Th}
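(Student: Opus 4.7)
My approach proceeds in two stages: a resonant blow-up rescaling $r$ near the resonant amplitude $a$ and subtracting off the resonant phase, followed by an iterative averaging that eliminates the fast variable $S$ order by order.

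First, I perform the substitution \eqref{ch1}. Differentiating and using \eqref{PS} gives
\begin{align*}
\frac{dR}{dt} &= t^{\frac{1}{2q}} f\bigl(a + t^{-\frac{1}{2q}}R,\tfrac{\kappa}{\varkappa}S + \Psi,S,t\bigr) + \frac{R}{2qt}, \\
\frac{d\Psi}{dt} &= \omega\bigl(a + t^{-\frac{1}{2q}}R\bigr) - \tfrac{\kappa}{\varkappa}S'(t) + g\bigl(a + t^{-\frac{1}{2q}}R,\tfrac{\kappa}{\varkappa}S + \Psi,S,t\bigr).
\end{align*}
The resonance condition \eqref{rc} annihilates the constant $\omega(a) - (\kappa/\varkappa)s_0$. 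Taylor-expanding $\omega$, $f_j$, $g_j$ in the first argument around $a$ and combining with the expansions \eqref{fgas} and $S'(t) = s_0 + \mathcal O(t^{-1/q})$ from \eqref{Sform} produces formal asymptotic series for $dR/dt$ and $d\Psi/dt$ in powers of $t^{-1/(2q)}$. Pairing a factor $t^{-j/q}$ from \eqref{fgas} with $(t^{-1/(2q)}R)^m$ from the Taylor expansion shows that the coefficient of $t^{-k/(2q)}$ in $dR/dt$ is a polynomial in $R$ of degree $k-1$, while in $d\Psi/dt$ the term $\omega^{(k)}(a)R^k/k!$ forces the coefficient to have degree $k$. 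At leading order the equations read $dR/dt = t^{-1/(2q)} f_1(a, \kappa S/\varkappa + \Psi, S) + \mathcal O(t^{-1/q})$ and $d\Psi/dt = \eta R \, t^{-1/(2q)} + \mathcal O(t^{-1/q})$.

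Next, I construct the transformation \eqref{ch2} as
\begin{gather*}
\tilde\rho_N = \sum_{k=1}^N t^{-\frac{k}{2q}} u_k(R,\Psi,S), \qquad \tilde\psi_N = \sum_{k=1}^N t^{-\frac{k}{2q}} v_k(R,\Psi,S),
\end{gather*}
with $u_k, v_k$ of period $2\pi$ in $\Psi$ and $2\pi\varkappa$ in $S$. Requiring \eqref{rhopsi} and equating coefficients of $t^{-k/(2q)}$ yields homological equations
\begin{gather*}
s_0 \partial_S u_k = \Lambda_k(R,\Psi) - F_k(R,\Psi,S), \qquad s_0 \partial_S v_k = \Omega_k(R,\Psi) - G_k(R,\Psi,S),
\end{gather*}
where $F_k, G_k$ gather the order $t^{-k/(2q)}$ terms arising from step one together with the chain-rule corrections involving $u_j, v_j$ for $j<k$. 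Setting $\Lambda_k := \langle F_k \rangle_{\varkappa S}$ and $\Omega_k := \langle G_k \rangle_{\varkappa S}$ makes the right-hand sides have zero mean over $S \in [0, 2\pi\varkappa]$, so $u_k, v_k$ are obtained as zero-mean antiderivatives in $S$ and remain polynomial in $R$ of the same degree as $F_k, G_k$. At $k=1$ this directly gives $\Lambda_1 = \langle f_1(a, \kappa S/\varkappa + \psi, S)\rangle_{\varkappa S}$ and $\Omega_1 = \eta\rho$.

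Finally, I bound the transformation and the remainders in $\mathcal D^0_{\varsigma, t_0}$. There $|R|$ is at most of order $t^{(1-\varsigma)/(2q)}$, so each summand $t^{-k/(2q)} u_k$ of degree $k-1$ in $R$ is $\mathcal O(t^{-(1+(k-1)\varsigma)/(2q)})$; the total is $\mathcal O(t^{-1/(2q)})$, hence smaller than $\epsilon$ for $t_0$ large, and an implicit function argument gives the inverse transformation with the same bound. The truncation remainders satisfy \eqref{tildeLO} by the uniform bounds on \eqref{fgas}, and pushing the construction one step past $N$ identifies the coefficients $\tilde\Lambda_{N,k}, \tilde\Omega_{N,k}$ of \eqref{LOas} together with their polynomial growth in $\rho$. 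The main technical obstacle is the bookkeeping of polynomial degrees through the iteration: each chain-rule contribution $\partial_R u_j \cdot dR/dt$ or $\partial_\Psi v_j \cdot d\Psi/dt$ must be shown to enter $F_k, G_k$ with degree at most $k-1$ and $k$ respectively, and the homological integration in $S$ must preserve these bounds, which it does because antidifferentiation acts linearly on the $R$-polynomial coefficients.
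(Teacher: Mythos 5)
Your proposal is correct and follows essentially the same route as the paper: the shift and rescaling \eqref{ch1} using the resonance condition \eqref{rc}, a near-identity transformation with coefficients $u_k,v_k$ periodic in $\Psi$ and $S$, homological equations $s_0\partial_S u_k=\Lambda_k-F_k$ (with the lower-order chain-rule corrections absorbed into $F_k$, which the paper writes separately as $\tilde F_k$), the choice $\Lambda_k=\langle F_k\rangle_{\varkappa S}$ to ensure zero-mean right-hand sides, and the same degree bookkeeping and domain estimates on $\mathcal D^0_{\varsigma,t_0}$. The details you flag as the main technical burden (degree propagation through the iteration and invertibility of the transformation) are handled in the paper exactly as you describe.
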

The proof is contained in Section~\ref{sec3}. 

Note that Theorem~\ref{Th1} describes an averaging transformation that simplifies the system in the leading asymptotic terms as $t\to\infty$. Moreover, after this procedure, some terms in sums \eqref{tildeLON} may disappear because they have the zero mean. Let $n\in [1,2q]$ and $m\in[2,2q]$ be integers such that
\begin{gather}\label{asn}
\begin{split}
&\Lambda_i(\rho,\psi)\equiv 0, \quad  1\leq i<n,  \quad \Lambda_n(\rho,\psi)\not\equiv 0,\\
&\Omega_j(\rho,\psi)\equiv 0, \quad 1<j< m,  \quad \Omega_m(\rho,\psi)\not\equiv 0.
\end{split}
\end{gather}

The proposed method is based on the study of the truncated system
\begin{gather}\label{trsys}
\frac{d\varrho}{dt}=\hat\Lambda_N(\varrho,\phi,t), \quad \frac{d\phi}{dt}=\hat\Omega_N(\varrho,\phi,t)
\end{gather}
obtained from \eqref{rhopsi} by dropping the remainder terms $\tilde \Lambda_N$ and $\tilde \Omega_N$. System \eqref{trsys} can be considered as a model system that describes the average dynamics for the amplitude residual and phase shift. First, we discuss the solutions of system \eqref{trsys}. Next, we show that the trajectories of the full system \eqref{rhopsi} behave like the solutions of the truncated system.

The behaviour of solutions to asymptotically autonomous system \eqref{trsys} depends on the properties of the corresponding limiting system 
\begin{gather}\label{limsys}
t^{\frac{n}{2q}} \frac{d\hat\varrho}{dt}=\Lambda_n(\hat\varrho,\hat\phi), \quad t^{\frac{1}{2q}} \frac{d\hat\phi}{dt}=\eta\hat\varrho.
\end{gather}
In particular, the presence and the stability of fixed points in system \eqref{limsys} play a crucial role.  With this in mind, we consider the following assumption:
\begin{gather}
\label{aszero}
\exists \, \psi_0\in\mathbb R: \quad \Lambda_n(0,\psi_0)=0, \quad \nu_n:=\partial_\psi \Lambda_n(0,\psi_0)\neq 0,
\end{gather}
and define the parameter $\lambda_n:=\partial_\rho \Lambda_n(0,\psi_0)$. In this case, system \eqref{limsys} has an equilibrium $(0,\psi_0)$, and we have
\begin{Lem}\label{Lem01}
Let assumptions \eqref{asn} and \eqref{aszero} hold.
 \begin{itemize}
	\item If $\nu_n \eta>0$ or $ \nu_n \eta<0$, $\lambda_n>0$, $n\leq q$, then the equilibrium $(0,\psi_0)$ of system \eqref{limsys} is unstable.
	\item If $ \nu_n \eta<0$ and $\lambda_n<0$, then the equilibrium $(0,\psi_0)$ of system \eqref{limsys} is asymptotically stable.
\end{itemize}
 \end{Lem}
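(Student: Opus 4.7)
The plan is to linearize the limiting system~\eqref{limsys} at the equilibrium $(0,\psi_0)$ and then apply Chetaev's and Lyapunov's theorems in a form adapted to the non-autonomous setting. Shifting coordinates by $\rho=\hat\varrho$, $\psi=\hat\phi-\psi_0$ and Taylor-expanding $\Lambda_n$ about $(0,\psi_0)$ via~\eqref{aszero}, one rewrites \eqref{limsys} as
\[
t^{n/(2q)}\dot\rho = \lambda_n \rho + \nu_n \psi + F_n(\rho,\psi),\qquad t^{1/(2q)}\dot\psi = \eta \rho,
\]
with $F_n(\rho,\psi)=\mathcal O(\rho^2+\psi^2)$ as $(\rho,\psi)\to(0,0)$. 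The frozen-coefficient matrix of the linear part has determinant $-\nu_n\eta\,t^{-(n+1)/(2q)}$ and trace $\lambda_n t^{-n/(2q)}$, so $\mathrm{sgn}(\nu_n\eta)$ separates the saddle case from the focus/node case, while in the latter $\mathrm{sgn}(\lambda_n)$ sets the stability type.

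For the saddle-type case $\nu_n\eta>0$, I would apply Chetaev's theorem with $V_\ast(\rho,\psi)=\sigma\rho\psi$, where $\sigma=\mathrm{sgn}(\nu_n)=\mathrm{sgn}(\eta)$. On the wedge $\{V_\ast>0\}$, a direct computation gives
\[
\dot V_\ast = |\nu_n|\,t^{-n/(2q)}\psi^2 + |\eta|\,t^{-1/(2q)}\rho^2 + \sigma\lambda_n t^{-n/(2q)}\rho\psi + \mathcal O(|\rho|^3+|\psi|^3)\,t^{-n/(2q)}.
\]
The two pure-square terms are strictly positive and dominate the remaining contributions in a sufficiently small neighborhood of the origin, so $\dot V_\ast>0$ there and Chetaev's theorem yields instability.

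For the focus-type cases $\nu_n\eta<0$, I would treat both stability directions with a single time-dependent quadratic form
\[
V(\rho,\psi,t) = \rho^2 - (\nu_n/\eta)\,t^{(1-n)/(2q)}\psi^2,
\]
which is positive definite in $(\rho,\psi)$ because $\nu_n/\eta<0$. The key observation is that the $\rho\psi$ cross terms coming from $2\rho\dot\rho$ and from $-2(\nu_n/\eta)t^{(1-n)/(2q)}\psi\dot\psi$ cancel exactly at leading order, so
\[
\dot V = 2\lambda_n t^{-n/(2q)}\rho^2 + \frac{(n-1)\nu_n}{2q\,\eta}\,t^{(1-n-2q)/(2q)}\psi^2 + \mathcal O((\rho^2+\psi^2)|\rho|)\,t^{-n/(2q)}.
\]
When $\lambda_n<0$, the $\rho^2$-coefficient is negative and, since $\nu_n/\eta<0$, the $\psi^2$-coefficient is $\leq 0$ (strictly negative for $n>1$, zero for $n=1$). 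For $n>1$ this already gives negative definiteness of $\dot V$ and asymptotic stability, whereas for $n=1$ a LaSalle-type argument closes the gap: on $\{\dot V=0\}$ one has $\rho\equiv 0$ and the $\dot\rho$-equation then forces $\nu_1\psi\equiv 0$. When $\lambda_n>0$, both leading quadratic coefficients of $\dot V$ are $\geq 0$, strictly positive for $n>1$, so Chetaev's theorem gives instability; the restriction $n\leq q$ enters precisely at this step in order to control the cubic remainder by the dominant quadratic growth term $\lambda_n t^{-n/(2q)}\rho^2$ uniformly on the set where the orbit could otherwise remain.

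The main obstacle is the mismatch between the two time scales $t^{-n/(2q)}$ and $t^{-1/(2q)}$ in the linearized system. The specific weight $t^{(1-n)/(2q)}$ in front of $\psi^2$ is singled out precisely by the requirement that the cross terms cancel at leading order, and this is the crucial technical step. The remaining delicate point is the quantitative estimation of the nonlinear remainder $F_n$ against the resulting non-autonomous quadratic form, which is the source of the condition $n\leq q$ in the unstable focus case.
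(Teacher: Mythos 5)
Your overall strategy (quadratic Lyapunov/Chetaev functions with a time-dependent weight chosen so that the cross terms coming from the two time scales $t^{-n/(2q)}$ and $t^{-1/(2q)}$ cancel at leading order) is the same as the paper's, and your $V(\rho,\psi,t)=\rho^2-(\nu_n/\eta)\,t^{(1-n)/(2q)}\psi^2$ is, up to normalization, the leading part of the paper's $L_n$. However, the argument as written has a genuine gap precisely where the sign of $\lambda_n$ is supposed to decide the outcome. After the cancellation, your $\dot V$ contains $2\lambda_n t^{-n/(2q)}\rho^2$ but \emph{no} $\psi^2$ term at the rate $t^{-n/(2q)}$: the only $\psi^2$ contribution comes from $\partial_t V$ and has coefficient $\frac{(n-1)\nu_n}{2q\eta}t^{(1-n)/(2q)-1}$, which is nonpositive for every sign of $\lambda_n$ (since $\nu_n/\eta<0$), strictly negative for $n>1$. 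This contradicts your assertion that for $\lambda_n>0$ "both leading quadratic coefficients of $\dot V$ are $\geq 0$": in that case $\dot V$ is indefinite and Chetaev's theorem does not apply as described. In the case $\lambda_n<0$, $n>1$, the inequality $\dot V\le 0$ controls $\rho$ but, because the $\psi^2$ weight in $V$ decays, it gives only $\psi^2\lesssim t^{(n-1)/(2q)}V(t)=\mathcal O(1)$, i.e.\ boundedness rather than $\psi\to 0$; asymptotic stability is not reached. For $n=1$ the appeal to "a LaSalle-type argument" is also not available off the shelf, since the system is non-autonomous with decaying coefficients.

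The missing ingredient, which is exactly what the paper adds, is a further cross-term correction to the Lyapunov function: $\chi_1 uv$ with $\chi_1$ as in \eqref{chi1} for $n=1$, and $t^{-\frac{n-1}{2q}}\bigl(\lambda_n^2v^2/(2|\eta|)+\lambda_n(\mathrm{sgn}\,\eta)\,uv\bigr)$ for $n\ge 2$. Differentiating the $uv$ term along the flow produces a $u^2$ contribution via $\dot v=t^{-1/(2q)}\eta u$, and the net effect is to turn the derivative into $\lambda_n t^{-n/(2q)}\bigl(|\eta|u^2+|\nu_n|v^2\bigr)+\text{h.o.t.}$, sign-definite with the sign of $\lambda_n$ in \emph{both} variables at the same rate. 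This yields the two-sided Gronwall inequalities \eqref{dL1}, \eqref{dL2}, \eqref{dLn00}, whose integration gives instability or asymptotic stability. It also clarifies the role of $n\le q$: it is not needed to control the cubic remainder, but to make $\int^\infty t^{-(2n-1)/(2q)}\,dt$ divergent, so that the lower Gronwall bound actually forces trajectories out of a neighbourhood of the origin; for $n>q$ the integral converges and no instability follows. A smaller issue: in the saddle case $\nu_n\eta>0$ with $n=1$, the derivative of your Chetaev function $\sigma\rho\psi$ is positive on the whole wedge only if $\lambda_1\ge 0$ or $\lambda_1^2<4|\nu_1\eta|$; for $\lambda_1<-2\sqrt{|\nu_1\eta|}$ the quadratic form is negative somewhere on $\{\sigma\rho\psi>0\}$ and the cone must be realigned with the eigendirections (the paper avoids this by computing the eigenvalues $\mu_\pm(t)$ directly and noting they are real of opposite signs).
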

Note that if $ \nu_n \eta>0$, the equilibrium $(0,\psi_0)$ is of saddle type. In this case, similar dynamics occurs in the full system. However, if $ \nu_n \eta<0$, the fixed point can be either stable or unstable, depending on the sign of the divergence of the vector field calculating at the equilibrium. Let us show that under a similar condition there exists a solution of system \eqref{trsys} tending to the point $(0,\psi_0)$ as $t\to\infty$. Define 
\begin{gather*}
d_{n,m}:=\begin{cases} 
\lambda_n, & n<m,\\
\lambda_n+\omega_m, & n=m,\\
\omega_m, & n>m,
\end{cases}
\end{gather*}
where $\omega_m:=\partial_\varphi \Omega_m(0,\psi_0)$. Then, we have the following:
\begin{Lem}\label{Lem1}
Let assumptions \eqref{asn} and \eqref{aszero} hold with $\nu_n\eta<0$ and $d_{n,m}<0$. Then for all $N\geq \max\{m,n\}$ system \eqref{trsys} has a particular solution $\varrho_\ast(t)$, $\phi_\ast(t)$ with asymptotic expansion in the form
\begin{gather}\label{assol}
\varrho_\ast(t)\sim \sum_{k=1}^\infty t^{-\frac{k+m-2}{2q}} \varrho_k, \quad \phi_\ast(t)\sim \psi_0+\sum_{k=1}^\infty t^{-\frac{k}{2q}}\phi_k, \quad t\to\infty,
\end{gather}
where $\varrho_k$, $\phi_k$ are some constants. Moreover, the solution $\varrho_\ast(t)$, $\phi_\ast(t)$ is asymptotically stable.
\end{Lem}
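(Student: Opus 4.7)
The plan is to carry out the proof in three coordinated steps: formal construction of the series \eqref{assol}, passage from the formal series to a true solution, and verification of asymptotic stability via a Lyapunov function.

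First, I would substitute the ansatz \eqref{assol} into the truncated system \eqref{trsys} and match coefficients of like powers of $t^{-1/(2q)}$. Expanding $\Lambda_k(\varrho,\phi)$ and $\Omega_k(\varrho,\phi)$ in Taylor series around the fixed point $(0,\psi_0)$ of the limiting system \eqref{limsys} and using \eqref{aszero}, the leading balance in the second equation at order $t^{-m/(2q)}$ reduces to $\eta \varrho_1 + \Omega_m(0,\psi_0) = 0$, determining $\varrho_1$ thanks to $\eta\neq 0$. The leading balance in the first equation, at order $t^{-(n+1)/(2q)}$, yields a linear equation for $\phi_1$ with coefficient $\nu_n\neq 0$. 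At every subsequent order one obtains a triangular linear system for the next pair of coefficients with determinant proportional to $\eta\,\nu_n \neq 0$, so all coefficients $\varrho_k,\phi_k$ are uniquely determined by induction. The polynomial structure of $\Lambda_k$ and $\Omega_k$ guaranteed by Theorem~\ref{Th1} ensures that these equations are algebraic and finite at each step.

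Second, to obtain a genuine solution, I would fix a large truncation order $M$ and define $\varrho_\ast^{(M)}(t)=\sum_{k=1}^{M} t^{-(k+m-2)/(2q)}\varrho_k$ and $\phi_\ast^{(M)}(t)=\psi_0+\sum_{k=1}^{M} t^{-k/(2q)}\phi_k$. By construction, substituting these into \eqref{trsys} leaves a residual of order $t^{-(M+1)/(2q)}$ decaying arbitrarily fast. Making the change of variables $\varrho=\varrho_\ast^{(M)}(t)+u$, $\phi=\phi_\ast^{(M)}(t)+v$, the transformed system for $(u,v)$ has an asymptotically stable zero equilibrium (established in the next step) plus a small time-dependent forcing from the residual. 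Standard attracting-set arguments, combined with the stability estimate, then supply a solution $(u(t),v(t))\to 0$ and hence a true solution $(\varrho_\ast,\phi_\ast)$ of \eqref{trsys}. Uniqueness of the formal series guarantees that this solution admits the full asymptotic expansion \eqref{assol}.

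Third, for the asymptotic stability claim, I would linearise \eqref{trsys} about $(\varrho_\ast(t),\phi_\ast(t))$. The leading part of the Jacobian is
\begin{gather*}
A(t)\sim\begin{pmatrix} \lambda_n t^{-n/(2q)} & \nu_n t^{-n/(2q)}\\ \eta t^{-1/(2q)} & \omega_m t^{-m/(2q)}\end{pmatrix},
\end{gather*}
whose trace satisfies $\mathrm{tr}\,A(t)\sim d_{n,m}\,t^{-\min(n,m)/(2q)}<0$ by the hypothesis $d_{n,m}<0$, and whose determinant satisfies $\det A(t)\sim -\nu_n\eta\,t^{-(n+1)/(2q)}>0$ by $\nu_n\eta<0$. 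I would then construct a positive-definite quadratic Lyapunov function $V(u,v,t)=a(t)u^2+2b(t)uv+c(t)v^2$ whose weights are chosen to balance the dominant off-diagonal contributions. A natural prescription is to enforce $a(t)\nu_n t^{-n/(2q)}+c(t)\eta t^{-1/(2q)}=0$, which, since $-\eta/\nu_n>0$, gives a permissible positive-definite choice with $a(t)/c(t)=-\eta\nu_n^{-1}t^{(n-1)/(2q)}$. Computing $dV/dt$ along the trajectories of the $(u,v)$-system, the diagonal terms of $A(t)$ together with the controlled contribution from $\dot a,\dot b,\dot c$ yield an estimate $dV/dt\leq -\gamma(t)V$ with $\gamma(t)>0$ not integrable on $[t_0,\infty)$, which gives asymptotic stability.

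The main obstacle will be the Lyapunov-function construction in the third step: the two dominant rates $n/(2q)$ and $m/(2q)$ in the linearization generally differ, so the weights $a(t),b(t),c(t)$ must be time-dependent and tuned separately in the three cases $n<m$, $n=m$, $n>m$ so that both positive definiteness of $V$ and negative definiteness of $dV/dt$ survive the time-derivative corrections and the small nonlinear remainders. Handling these regimes uniformly, while keeping the decaying off-diagonal cross term $\eta\,t^{-1/(2q)}$ dominated, is the technically delicate part of the argument.
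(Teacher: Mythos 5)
Your proposal follows essentially the same route as the paper's proof: the coefficients $\varrho_k,\phi_k$ are obtained from the same triangular linear system with determinant proportional to $\eta\nu_n\neq 0$, the passage to a true solution uses truncation plus stability of the $(u,v)$-system under the residual forcing, and your time-dependent quadratic form with weight ratio $a(t)/c(t)=-\eta\nu_n^{-1}t^{\frac{n-1}{2q}}$ and case-dependent cross term $b(t)uv$ is precisely the leading part of the paper's Lyapunov function $({\hbox{\rm sgn}}\,\eta)\,t^{\frac{n}{2q}}\mathcal H_M+t^{-\frac{n-1}{2q}}\lambda_n\mathcal K(u,v)$ (with $\lambda_n$ replaced by $\lambda_n+\omega_n$ or $\omega_m$ in the cases $n=m$ and $n>m$). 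The only caveat is that your final rate $\gamma(t)\sim t^{-\frac{\ell+n-1}{2q}}$ is non-integrable, and hence yields genuinely asymptotic stability, only when $\ell+n-1\leq 2q$; but the paper's own proof carries exactly the same restriction (it concludes asymptotic stability for $n\leq q$ and only non-asymptotic stability otherwise), so your argument is at the same level of rigor.
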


It can be shown that the dynamics described by the solution $\varrho_\ast(t)$, $\phi_\ast(t)$ of the truncated system is preserved in system \eqref{rhopsi}. We have
\begin{Th}\label{Th2}
Let system \eqref{PS} satisfy \eqref{fgas}, \eqref{Sform}, \eqref{rc}, and assumptions \eqref{asn} and \eqref{aszero} hold with $\nu_n\eta<0$ and $d_{n,m}<0$. Then, there is
$N_0\in\mathbb Z_+$ such that for all $N\geq N_0$ and $\varepsilon>0$ there exist $\delta>0$ and $t_\ast\geq t_0$ such that any solution $r(t)$, $\varphi(t)$ of system \eqref{rhopsi} with initial data $r(t_\ast)=r_\ast$, $\varphi(t_\ast)=\varphi_\ast$, 
$ |r_\ast-a-t_\ast^{-1/(2q)}\varrho_\ast(t_\ast) |+ |\varphi_\ast-\kappa S(t_\ast)/\varkappa-\phi_\ast(t_\ast)|\leq \delta $, satisfies the inequality: 
\begin{gather}\label{rpst}
\left|r(t)-a-t^{-\frac{1}{2q}}\varrho_\ast(t)\right|+\left|\varphi(t)-\frac{\kappa}{\varkappa}S(t)-\phi_\ast(t)\right|<\varepsilon
\end{gather} for all $t>t_\ast$.
\end{Th}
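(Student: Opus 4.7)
The plan is to transfer the asymptotic stability of the solution $(\varrho_\ast(t),\phi_\ast(t))$ from the truncated system~\eqref{trsys} to the full system~\eqref{rhopsi} via a Lyapunov argument, using the freedom to choose $N$ large so that the residual terms $\tilde\Lambda_N$ and $\tilde\Omega_N$ are dominated by the dissipation produced by the Lyapunov function. First I would apply the near-identity transformations \eqref{ch1}--\eqref{ch2} furnished by Theorem~\ref{Th1}: since $|\tilde\rho_N|,|\tilde\psi_N|\leq\epsilon$ on $\mathcal D^0_{\varsigma,t_0}$ with $\epsilon$ arbitrarily small, the estimate \eqref{rpst} is, up to constants, equivalent to the same estimate written in the $(\rho,\psi)$ variables. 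Setting $u:=\rho-\varrho_\ast(t)$ and $v:=\psi-\phi_\ast(t)$, and using that $(\varrho_\ast,\phi_\ast)$ is an exact solution of \eqref{trsys} by Lemma~\ref{Lem1}, the problem reduces to proving stability of the zero solution of the shifted system
\begin{equation*}
\frac{du}{dt}=F_N(u,v,t)+\widetilde F_N(u,v,S(t),t),\qquad \frac{dv}{dt}=G_N(u,v,t)+\widetilde G_N(u,v,S(t),t),
\end{equation*}
where $F_N(0,0,t)\equiv G_N(0,0,t)\equiv 0$ and $\widetilde F_N,\widetilde G_N=\mathcal O(t^{-(N+1)/(2q)})$ uniformly in a neighbourhood of the origin.

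The next step is to build a positive-definite Lyapunov function for the truncated part. Linearising $F_N,G_N$ about $(0,0)$ and using that $\varrho_\ast(t)\to 0$, $\phi_\ast(t)\to\psi_0$, the leading-order linearisation reads
\begin{equation*}
\dot u=t^{-\frac{n}{2q}}(\lambda_n u+\nu_n v)+\ldots,\qquad \dot v=t^{-\frac{1}{2q}}\eta u+t^{-\frac{m}{2q}}\omega_m v+\ldots,
\end{equation*}
and the hypotheses $\nu_n\eta<0$, $d_{n,m}<0$ imply, as in Lemma~\ref{Lem01}, that the frozen coefficient matrix has eigenvalues with negative real parts. I would look for a quadratic form $V(u,v,t)=A(t)u^2+2B(t)uv+C(t)v^2$ with positive powers of $t$ as coefficients chosen to compensate for the anisotropic scaling of the two equations, so that
\begin{equation*}
\frac{dV}{dt}\bigg|_{F_N,G_N}\leq -\mu\, t^{-\gamma}\,V
\end{equation*}
on a small neighbourhood of the origin, for some $\mu,\gamma>0$; the higher-order nonlinear corrections and the subleading $t^{-k/(2q)}$ terms in $F_N,G_N$ are absorbed into this estimate for $V$ small.

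Adding the residual contributions along the full flow then yields
\begin{equation*}
\frac{dV}{dt}\leq -\mu\, t^{-\gamma}\,V+K\, t^{-\frac{N+1}{2q}}\sqrt{V}.
\end{equation*}
Choosing $N_0$ large enough that $(N_0+1)/(2q)$ exceeds $\gamma$ by a margin sufficient for a standard integrating-factor/Gronwall estimate applied to $\sqrt{V}$ to close, one concludes that $V(u(t),v(t),t)$ remains arbitrarily small for all $t\geq t_\ast$ provided $\delta$ is sufficiently small and $t_\ast$ sufficiently large, which is the content of \eqref{rpst}.

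The main difficulty I anticipate is the construction of $V$. The $u$- and $v$-equations decay on different time scales $t^{-n/(2q)}$ and $t^{-1/(2q)}$, and the phase equation is driven by $u$ rather than $v$ at leading order, so no diagonal quadratic form is dissipative. A carefully weighted cross term $B(t)uv$, together with diagonal coefficients $A(t),C(t)$ likely depending differently on $t$ in the three regimes $n<m$, $n=m$, $n>m$ entering the definition of $d_{n,m}$, is needed to make $dV/dt$ strictly negative along the truncated flow. Once such a $V$ is constructed, the perturbation estimate and the Gronwall-type comparison are essentially routine.
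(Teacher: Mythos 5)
Your proposal follows essentially the same route as the paper: shift to $u=\rho-\varrho_\ast(t)$, $v=\psi-\phi_\ast(t)$, build a time-weighted quadratic Lyapunov function with a cross term tuned to the anisotropic scales and to the three regimes $n<m$, $n=m$, $n>m$ (the paper takes $({\hbox{\rm sgn}}\,\eta)t^{n/(2q)}\mathcal H+t^{-(\cdot)/(2q)}\mathcal K$ built from the near-Hamiltonian structure, which is exactly the form you anticipate), and absorb the $\mathcal O(t^{-(N+1)/(2q)})$ remainders by taking $N$ large. The only cosmetic difference is the final step: instead of a Gronwall estimate on $\sqrt{V}$, the paper uses an annulus argument (dissipation $\leq -\gamma t^{-\ell/(2q)}\Delta^2$ versus perturbation $\leq Ct^{-(\ell+1)/(2q)}\Delta$ for $\delta_\epsilon\leq\Delta\leq\epsilon$), which yields the stated non-asymptotic stability with $N_0=\ell+n-1$; both conclusions are standard and valid here.
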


Note that in the opposite case, when $\nu_n\eta<0$ and $d_{n,m}>0$, the asymptotic regime described in Lemma~\ref{Lem1} turns out to be unstable. Let $\varrho_{\ast,M}(t)$, $\phi_{\ast,M}(t)$ be $M$th partial sums of the series \eqref{assol} and $\ell=\min\{m,n\}$. Then, we have
\begin{Th}\label{Th21}
Let system \eqref{PS} satisfy \eqref{fgas}, \eqref{Sform}, \eqref{rc}, and assumptions \eqref{asn} and \eqref{aszero} hold with  $\nu_n\eta<0$, $d_{n,m}>0$, and $\ell+n-1< 2 q$. 
Then, there is $N_0\in\mathbb Z_+$, $\varepsilon > 0$ such that for all $\delta\in (0,\varepsilon)$, $N\geq N_0$, $M\in \mathbb Z_+$ the solution $r(t)$, $\varphi(t)$ of system \eqref{rhopsi} with initial data $r(t_\ast)=r_\ast$, $\varphi(t_\ast)=\varphi_\ast$,
$ |r_\ast-a-t_\ast^{-1/(2q)}\varrho_\ast(t_\ast) |+ |\varphi_\ast-\kappa S(t_\ast)/\varkappa-\phi_\ast(t_\ast)|\leq \delta $
at some $t_\ast\geq t_0$, satisfies the inequality: 
\begin{gather}\label{rpunst}
\left|r(t_e)-a-t_e^{-\frac{1}{2q}}\varrho_\ast(t_e)\right|+\left|\varphi(t_e)-\frac{\kappa}{\varkappa}S(t_e)-\phi_\ast(t_e)\right|\geq \varepsilon
\end{gather}
at some $t_e>t_\ast$.
\end{Th}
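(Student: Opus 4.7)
First, apply Theorem~\ref{Th1} with some $N\geq N_0$ (to be chosen large) and pass to the averaged system~\eqref{rhopsi}. Let $\varrho_{\ast,M}(t)$, $\phi_{\ast,M}(t)$ denote the $M$th partial sums of the formal series~\eqref{assol}; these are well defined as formal objects regardless of the sign of $d_{n,m}$, since their coefficients come from a recursion that uses only $\nu_n\eta\neq 0$. Introduce the deviation variables $\xi=\rho-\varrho_{\ast,M}(t)$, $\zeta=\psi-\phi_{\ast,M}(t)$. Substituting into~\eqref{rhopsi} yields
\begin{gather*}
\dot\xi = A_{11}(t)\xi+A_{12}(t)\zeta+\mathcal N_1(\xi,\zeta,t)+\mathcal R_1(t,S(t)), \\
\dot\zeta = A_{21}(t)\xi+A_{22}(t)\zeta+\mathcal N_2(\xi,\zeta,t)+\mathcal R_2(t,S(t)),
\end{gather*}
where the leading parts of the coefficients are $A_{11}\sim\lambda_n t^{-n/(2q)}$, $A_{12}\sim\nu_n t^{-n/(2q)}$, $A_{21}\sim\eta t^{-1/(2q)}$, $A_{22}\sim\omega_m t^{-m/(2q)}$, the terms $\mathcal N_i$ are at least quadratic in $(\xi,\zeta)$ with bounded time-dependent coefficients, and the residuals $\mathcal R_i$ aggregate the truncation error of the formal series with the averaging remainders $\tilde\Lambda_N,\tilde\Omega_N$, so that $|\mathcal R_i|\leq C t^{-(M_\ast+1)/(2q)}$ with $M_\ast=\min\{N,M\}$ as large as we wish.

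Second, mirror the Lyapunov-function construction behind Theorem~\ref{Th2} from Section~\ref{sec5} with the roles of stability and instability interchanged. The asymptotic matrix $A(t)$ has trace $d_{n,m} t^{-\ell/(2q)}(1+o(1))$ with $d_{n,m}>0$ and determinant $-\nu_n\eta\,t^{-(n+1)/(2q)}(1+o(1))>0$, because $\nu_n\eta<0$. Hence its eigenvalues have strictly positive real parts for all sufficiently large $t$, and the approximate solution corresponds to an unstable linear mode. Solve an asymptotic Lyapunov equation to produce a positive-definite quadratic form
\begin{gather*}
V(\xi,\zeta,t) = p_{11}(t)\xi^2 + 2p_{12}(t)\xi\zeta + p_{22}(t)\zeta^2,
\end{gather*}
whose weights $p_{ij}(t)$ are tuned so that the mismatch of natural scales between $\xi$ and $\zeta$ (specifically $p_{22}/p_{11}\asymp t^{-(n-1)/(2q)}$, which is positive by $\nu_n\eta<0$) is compensated in such a way that, along the linearized flow, $\dot V\geq c\,t^{-\ell/(2q)}V$ for $t$ large and some $c>0$. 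The hypothesis $\ell+n-1<2q$ is exactly what allows this leading instability rate $t^{-\ell/(2q)}$ to dominate the $O(t^{-1})$ correction from the time derivative of the weights, as well as the subleading contributions from the diagonal and off-diagonal entries of $A$.

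Third, fix $\varepsilon$ small enough so that the nonlinearities $\mathcal N_i$ contribute at most half of the linear growth to $\dot V$ inside the strip $\{|\xi|+|\zeta|<\varepsilon\}$, and fix $N_0$ large enough so that $\mathcal R_i$ contributes at most a quarter. Then, as long as the trajectory remains in the strip, $\dot V\geq (c/4)t^{-\ell/(2q)}V$, and Gronwall's inequality yields
\begin{gather*}
V(t)\geq V(t_\ast)\exp\{c'(t^{1-\ell/(2q)}-t_\ast^{1-\ell/(2q)})\},
\end{gather*}
which tends to infinity since $\ell\leq 2q-1$. Any initial data with $V(t_\ast)>0$ must therefore force the trajectory out of the strip at some $t_e>t_\ast$, and unwinding the transformations~\eqref{ch1}--\eqref{ch2} yields~\eqref{rpunst}. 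The main technical obstacle is the simultaneous choice of the weights $p_{ij}(t)$, the nonlinear estimates, and the residual bounds so that the differential inequality for $V$ holds uniformly in $\delta\in(0,\varepsilon)$ and $t_\ast$; the hypothesis $\ell+n-1<2q$ is what makes the ordering of asymptotic scales work out and is used in an essential way here.
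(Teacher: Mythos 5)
Your overall strategy coincides with the paper's: pass to the deviation variables from the $M$th partial sums, build a time-weighted positive-definite quadratic form with $p_{11}\asymp t^{(n-1)/(2q)}$ and $p_{22}\asymp 1$, derive a differential inequality bounding its derivative from below, and integrate to force escape from an $\varepsilon$-neighbourhood. However, there is a concrete error in the growth rate that causes you to miss where the hypothesis $\ell+n-1<2q$ actually enters. Your derivative estimate along the flow is of the form $\dot V\geq \gamma\, t^{-\ell/(2q)}\Delta^2$ with $\Delta^2=\xi^2+\zeta^2$; but with your own weights one only has $L_-\Delta^2\leq V\leq t^{(n-1)/(2q)}L_+\Delta^2$, so converting $\Delta^2$ into $V$ costs a factor $t^{-(n-1)/(2q)}$ and the correct Gronwall rate is $\dot V\geq \tilde\gamma\, t^{-(\ell+n-1)/(2q)}V$, not $t^{-\ell/(2q)}V$. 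The exponential factor is then $\exp\bigl\{c'\bigl(t^{1-(\ell+n-1)/(2q)}-t_\ast^{1-(\ell+n-1)/(2q)}\bigr)\bigr\}$, and it is exactly the hypothesis $\ell+n-1<2q$ that makes this diverge. Your displayed inequality with exponent $1-\ell/(2q)$, justified by ``$\ell\leq 2q-1$,'' would make the hypothesis superfluous, which is a sign the scales have been mismatched.

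The second issue is your claim that, by taking $N_0$ and $M$ large, the residuals $\mathcal R_i$ contribute ``at most a quarter'' of the linear growth throughout the strip $\{|\xi|+|\zeta|<\varepsilon\}$. The residual contribution to $\dot V$ is $\mathcal O(t^{-K})\mathcal O(\Delta)$ (linear in $\Delta$), while the good term is $\gamma t^{-\ell/(2q)}\Delta^2$ (quadratic), so no choice of $N,M$ makes the former uniformly subordinate as $\Delta\to 0$ at fixed $t$. The inequality can only be secured on an annulus $\delta\leq\Delta\leq\varepsilon$ with $t_\ast$ chosen large depending on $\delta$, which is precisely how the paper arranges it (and why the theorem is phrased with initial data at distance $\delta$ and $t_\ast\geq t_0$ chosen afterwards, the escape being asserted for such data rather than for all data in the ball, where $V(t_\ast)>0$ alone would not suffice to start the growth). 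Both defects are repairable within your framework, but as written the quantitative core of the argument does not close.
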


Let us remark that if $d_{n,m}=0$, the existence and stability of the phase locking regime are not guaranteed by Theorem~\ref{Th2}. For this case, consider the following assumption:
\begin{gather}\label{asst}
\begin{split}
 &\exists\,  h\in (\ell,2q]: \quad \partial_\rho \Lambda_k(\rho,\psi)+\partial_\psi \Omega_k(\rho,\psi)\equiv 0, \quad k\leq h-1, \\
 & d_h:= \partial_\rho \Lambda_h(0,\psi_0)+\partial_\psi \Omega_h(0,\psi_0)\neq 0.
\end{split}
\end{gather}
Then, we have
\begin{Lem}\label{Lem2}
Let assumptions \eqref{asn}, \eqref{aszero} and \eqref{asst} hold with $\nu_n\eta<0$ and $d_h<0$. Then for all $N\geq \max\{m,n,h\}$ system \eqref{trsys} has a stable particular solution $\varrho_\ast(t)$, $\phi_\ast(t)$ with asymptotic expansion \eqref{assol}. Moreover, the solution $\varrho_\ast(t)$, $\phi_\ast(t)$ is asymptotically stable if $h+n-1<2q$.
\end{Lem}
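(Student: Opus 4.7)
The plan is to follow the three-step scheme of Lemma~\ref{Lem1} and to modify only the stability analysis, which must now detect damping that appears several orders below the leading rotation.

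\textbf{Formal series and true solution.} Substituting the ansatz \eqref{assol} into \eqref{trsys} and equating coefficients of like powers of $t^{-1/(2q)}$ produces a triangular recursion for the coefficients $\varrho_k$, $\phi_k$. At each order the unknowns satisfy a $2\times 2$ linear system whose determinant equals $-\eta\nu_n\neq 0$ by \eqref{aszero}; the hypothesis $N\ge\max\{m,n,h\}$ guarantees that every term needed to close the recursion is already present in \eqref{trsys}. To promote the formal series to a genuine solution I form a partial sum $\varrho_{\ast,M}$, $\phi_{\ast,M}$, introduce deviations $\tilde\varrho=\varrho-\varrho_{\ast,M}$, $\tilde\phi=\phi-\phi_{\ast,M}$, and solve the residual equations on $[t_\ast,\infty)$ by a contraction argument in a suitable weighted sup-norm: for $M$ large the forcing is arbitrarily small, while the linearization has determinant of order $t^{-(n+1)/(2q)}$ with positive sign $-\eta\nu_n>0$, so the center-type dynamics can be controlled uniformly in time, exactly as in Lemma~\ref{Lem1}.

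\textbf{Stability.} This is the main obstacle. Write $u=\rho-\varrho_\ast$, $v=\psi-\phi_\ast$; the leading-order linearization is a slow rotation whose trace vanishes by \eqref{asst} up to order $t^{-h/(2q)}$, so no exponential contraction is available and a direct Lyapunov-function argument as in Lemma~\ref{Lem1} fails. I would instead seek a near-conserved quantity of the form
\begin{gather*}
V(u,v,t)=\tfrac{1}{2}\bigl(|\eta|\,u^2+|\nu_n|\,t^{-n/(2q)}v^2\bigr)+\mathcal W(u,v,t),
\end{gather*}
where $\mathcal W$ is polynomial in $(u,v)$ with time-dependent coefficients chosen so that every contribution to $dV/dt$ of order higher than $t^{-h/(2q)}V$ cancels identically. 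The divergence identities $\partial_\rho\Lambda_k+\partial_\psi\Omega_k\equiv 0$ for $k<h$ make such cancellations possible order by order, and the first uncancelled term is a multiple of $d_h\,t^{-h/(2q)}V$, which is negative by hypothesis. This yields the differential inequality $dV/dt\le -c\,t^{-h/(2q)}V+\mathcal R$ with $c>0$ and a remainder $\mathcal R$ that is either of higher order in $(u,v)$ or decays faster in $t$.

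\textbf{Conclusion.} Positive-definiteness of $V$ together with the above estimate yields Lyapunov stability at once. For asymptotic stability one needs the integrated damping $\int^\infty t^{-h/(2q)}\,dt$ to dominate the residual after taking into account the $t^{-n/(2q)}$ weight on $v^2$ in $V$ and the corresponding order loss when estimating the cross-term $\mathcal R$ in terms of $V$; a direct comparison of exponents shows that the argument closes precisely when $h+n-1<2q$, which is the additional hypothesis of the lemma.
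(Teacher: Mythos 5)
Your overall scheme --- formal series, partial sums, deviation variables, and a Lyapunov-type quantity whose decay is driven by the first non-vanishing divergence coefficient $d_h$ --- is the paper's scheme, but two steps contain genuine gaps. The first is the existence step. You pass from the partial sums to a true solution by a contraction argument, justified by the claim that the determinant $-\eta\nu_n>0$ of the linearization lets the ``center-type dynamics be controlled uniformly in time, exactly as in Lemma~\ref{Lem1}''. This is precisely where Lemma~\ref{Lem2} differs from Lemma~\ref{Lem1}: here $d_{n,m}=0$, so the linearization about $(\varrho_{\ast,M},\phi_{\ast,M})$ is a non-autonomous system whose trace vanishes to all orders below $t^{-h/(2q)}$, and for non-autonomous linear systems a positive determinant (purely imaginary frozen-time eigenvalues) does \emph{not} imply boundedness of the fundamental matrix. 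Boundedness here is governed exactly by the sign of $d_h$, so the damping at order $t^{-h/(2q)}$ must enter the existence argument itself; a sup-norm fixed point with ``small forcing'' does not close. The paper therefore runs the same damping-detecting Lyapunov function through the existence step (stability under non-vanishing perturbations), with the deviations rescaled by $t^{-(M-h)/(2q)}$ as in \eqref{subsM2} --- note the loss of $h$ orders relative to Lemma~\ref{Lem1}, which your unrescaled deviations do not account for.

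The second gap is in the Lyapunov function $V=\tfrac12\bigl(|\eta|u^2+|\nu_n|t^{-n/(2q)}v^2\bigr)+\mathcal W$. (i) The relative weight between the $u^2$ and $v^2$ terms is not matched to the Hamiltonian, whose quadratic part is $t^{-1/(2q)}\eta u^2/2-t^{-n/(2q)}(\lambda_n uv+\nu_n v^2/2)$: with your weights the two rotational cross terms in $dV/dt$ appear at the distinct orders $t^{-n/(2q)}uv$ and $t^{-(n+1)/(2q)}uv$ and cannot cancel, leaving a sign-indefinite term that dominates the $t^{-h/(2q)}\Delta^2$ damping (recall $h>\ell$). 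The weight ratio must be $t^{(n-1)/(2q)}:1$, as in the paper's choice $(\mathrm{sgn}\,\eta)\,t^{n/(2q)}\mathcal H_M+t^{-(h-1)/(2q)}d_h(\mathrm{sgn}\,\eta)uv$ (plus a $v^2$ correction when $h\le n$), and no higher-order $\mathcal W$ can repair a leading-order mismatch. (ii) Since the coefficient of $v^2$ in your $V$ decays, $V$ is not bounded below by a time-independent positive definite form; $V$ small does not control $|v|$, so $dV/dt\le0$ yields no stability statement for the phase deviation. The paper's normalization satisfies the two-sided bound \eqref{Lnineq1} with a uniform lower bound $L_-\Delta^2$ and an upper bound growing like $t^{(n-1)/(2q)}$, and it is exactly this growth that converts the damping rate $t^{-h/(2q)}\Delta^2$ into $t^{-(h+n-1)/(2q)}\mathcal L_M$ and produces the threshold $h+n-1<2q$ for asymptotic stability; with your weights the exponent bookkeeping would come out differently. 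Your underlying idea --- that the identities $\partial_\rho\Lambda_k+\partial_\psi\Omega_k\equiv0$ for $k<h$ permit order-by-order cancellation until the $d_h$ term appears --- is the right one, but the correction achieving it is a single explicit cross term, not an unspecified polynomial $\mathcal W$.
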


As in the previous case, the phase locking regime in system \eqref{PS} associated with the solution $\varrho_\ast(t)$, $\phi_\ast(t)$ of the model system \eqref{trsys} turns out to be stable if $d_h<0$ and unstable if $d_h>0$. We have the following:
\begin{Th}\label{Th23}
Let system \eqref{PS} satisfy \eqref{fgas}, \eqref{Sform}, \eqref{rc}, and assumptions \eqref{asn}, \eqref{aszero} and \eqref{asst} hold with $\nu_n\eta<0$ and $d_{h}<0$. Then there is
$N_0\in\mathbb Z_+$ such that for all $N\geq N_0$ and $\varepsilon>0$ there exist $\delta>0$ and $t_\ast\geq t_0$ such that any solution $r(t)$, $\varphi(t)$ of system \eqref{rhopsi} with initial data $r(t_\ast)=r_\ast$, $\varphi(t_\ast)=\varphi_\ast$, $|r_\ast-a-t_\ast^{-1/(2q)}\varrho_\ast(t_\ast) |+ |\varphi_\ast-\kappa S(t_\ast)/\varkappa-\phi_\ast(t_\ast)|\leq \delta $, satisfies the inequality \eqref{rpst} for all $t>t_\ast$.
\end{Th}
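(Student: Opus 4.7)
The plan is to adapt the Lyapunov function strategy behind Theorem~\ref{Th2} to the degenerate situation in which $d_{n,m}=0$ and the first nontrivial divergence appears only at order $h$. As a first step I would apply Theorem~\ref{Th1} with an $N$ to be fixed below and work in the averaged coordinates $(\rho,\psi)$. The particular solution $\varrho_\ast(t), \phi_\ast(t)$ supplied by Lemma~\ref{Lem2} is only formal, so I would approximate it by its $M$th partial sums $\varrho_{\ast,M}(t), \phi_{\ast,M}(t)$ and introduce deviations $u=\rho-\varrho_{\ast,M}(t)$, $v=\psi-\phi_{\ast,M}(t)$. Substituting and Taylor expanding $\hat\Lambda_N, \hat\Omega_N$ around the truncated solution produces a non-autonomous system for $(u,v)$ whose constant part is of order $t^{-(M+1)/(2q)}$, whose linear part carries the quasi-homogeneous elliptic structure analyzed in Lemma~\ref{Lem2}, and whose oscillatory remainder is controlled by \eqref{tildeLO}.

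Next I would construct a time-dependent Lyapunov function
\begin{gather*}
V(u,v,t) = V_0(u,v) + \sum_{k=1}^{h-1} t^{-\frac{k}{2q}}\, V_k(u,v),
\end{gather*}
with $V_0$ a positive-definite quadratic form adapted to the leading conservative linear block (its existence relying on $\nu_n\eta<0$), and with corrections $V_k$ chosen recursively so as to kill the order-$k$ contribution of $\Lambda_k, \Omega_k$ to $dV/dt$. The key input is the first part of assumption~\eqref{asst}: the identity $\partial_\rho\Lambda_k+\partial_\psi\Omega_k\equiv 0$ for $k\le h-1$ means that the obstruction at each step is divergence-free in $(\rho,\psi)$, which renders the cohomological equation for $V_k$ solvable in the class of polynomials in $(u,v)$ with time-dependent coefficients. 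At order $h$ the divergence $d_h<0$ does not vanish and produces a principal term
\begin{gather*}
\frac{dV}{dt} \le d_h\, t^{-\frac{h}{2q}}\bigl(c_1 u^2+c_2 v^2\bigr) + (\text{remainders})
\end{gather*}
with positive $c_1,c_2$, in a fixed $(u,v)$-neighborhood of the origin.

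The final step is to choose $N\ge N_0$ and $M$ so large that both the oscillatory remainders $\tilde\Lambda_N,\tilde\Omega_N$ and the residual from truncating the formal series~\eqref{assol} contribute at an order strictly higher than $t^{-h/(2q)}|(u,v)|^2$ on the working neighborhood. Then $dV/dt\le 0$ for all $t\ge t_\ast$ with $t_\ast$ large, and the classical Lyapunov stability theorem for non-autonomous systems yields the stability bound for $(u,v)$, which, translated back through the near-identity changes of variables~\eqref{ch1} and~\eqref{ch2}, is exactly~\eqref{rpst}.

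The principal obstacle is the recursive construction of the corrections $V_k$: at each order one has to verify that the obstruction produced by the previous step lies in the range of the cohomological operator determined by the leading elliptic linearization (a point guaranteed by the divergence-free part of~\eqref{asst}) while simultaneously ensuring that the $V_k$ have sufficiently controlled polynomial growth in $(u,v)$ for $V$ to remain positive-definite on the neighborhood. An additional subtlety is that $\Lambda_k,\Omega_k$ are polynomials of growing degree in $\rho$, so the domination of the nonlinear remainder by the principal negative term must be checked uniformly on an $(u,v)$-neighborhood whose size is a fixed multiple of $\varepsilon$, rather than one that shrinks with $t$.
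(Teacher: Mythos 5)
Your overall strategy (pass to averaged coordinates, shift by the resonant solution, build a Lyapunov function whose derivative is controlled at order $t^{-h/(2q)}$ by the sign of $d_h$) is the right one, but two points need repair. First, the recursive construction of the corrections $V_k$ via cohomological equations, which you yourself flag as the principal obstacle, is left unresolved and is in fact unnecessary. The paper takes the deviation system \eqref{uvsys1} in near-Hamiltonian form, $du/dt=-\partial_v\mathcal H+\mathcal P_N$, $dv/dt=\partial_u\mathcal H+\Upsilon+\mathcal Q_N$, where $\Upsilon$ is by construction the integral of the divergence $\partial_\rho\hat\Lambda_N+\partial_\psi\hat\Omega_N$. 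The first part of assumption \eqref{asst} then says directly that $\Upsilon(u,v,t)=t^{-h/(2q)}d_h v\,(1+\mathcal O(\Delta)+\mathcal O(t^{-1/(2q)}))$ --- the orders $k\le h-1$ contribute nothing to the non-Hamiltonian part at all, so there is nothing to ``kill'' order by order. One then takes the closed-form candidate \eqref{LFTh2}, namely $({\hbox{\rm sgn}}\,\eta)\,t^{n/(2q)}\mathcal H(u,v,t)$ plus a single explicit cross term of size $t^{-(h-1)/(2q)}d_h$; the Hamiltonian part of the flow cancels in $d\mathcal L/dt$ identically, and the surviving terms are exactly $\partial_t\mathcal L$ and the $\Upsilon$-contribution, giving $\mathcal D_1\le -\gamma t^{-h/(2q)}\Delta^2$ with $\gamma\sim|d_h|$. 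Also note that Lemma~\ref{Lem2} furnishes an actual particular solution $\varrho_\ast,\phi_\ast$, not merely a formal series, so you may substitute it directly and avoid the partial sums $\varrho_{\ast,M},\phi_{\ast,M}$ and the attendant residuals altogether (the initial condition in the statement is phrased in terms of $\varrho_\ast(t_\ast)$, so working with partial sums would force you to convert back at the end).

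Second, and more seriously, your final step does not go through as stated. The remainders $\tilde\Lambda_N,\tilde\Omega_N$ are $\mathcal O(t^{-(N+1)/(2q)})$ \emph{uniformly} in $(\rho,\psi)$ by \eqref{tildeLO}; they do not vanish at $(u,v)=(0,0)$ and hence contribute to $dV/dt$ a term of size $\mathcal O(\Delta)\,\mathcal O(t^{-(N-n+2)/(2q)})$, linear in $\Delta$. No choice of $N$ makes this ``of order strictly higher than $t^{-h/(2q)}\Delta^2$'' uniformly down to $\Delta=0$: for $\Delta\lesssim t^{-1/(2q)}$ the linear perturbation term dominates the negative quadratic term, so $dV/dt\le 0$ fails on a full neighbourhood of the origin and the classical Lyapunov stability theorem for an equilibrium cannot be invoked. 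What one actually has is stability under non-vanishing (persistent) perturbations: choosing $N\ge N_0=n+h-1$ gives $d\mathcal L/dt\le t^{-h/(2q)}(-\gamma\Delta^2+Ct^{-1/(2q)}\Delta)$, which is $\le 0$ only on the annulus $\delta_\epsilon\le\Delta\le\epsilon$ with $\delta_\epsilon\sim t_\epsilon^{-1/(2q)}$, $t\ge t_\epsilon$; combined with the two-sided bound \eqref{Lnineq1} this traps solutions starting in the $\delta$-ball inside the $\epsilon$-ball, which is exactly \eqref{rpst}. This annulus argument is the essential mechanism of the proof and must replace your appeal to the classical theorem.
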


\begin{Th}\label{Th24}
Let system \eqref{PS} satisfy \eqref{fgas}, \eqref{Sform}, \eqref{rc}, and assumptions \eqref{asn}, \eqref{aszero} and \eqref{asst} hold with  $\nu_n\eta<0$, $d_{h}>0$, and $h+n-1< 2 q$. 
Then, there is $N_0\in\mathbb Z_+$, $\varepsilon > 0$ such that for all $\delta\in (0,\varepsilon)$, $N\geq N_0$, $M\in \mathbb Z_+$ the solution $r(t)$, $\varphi(t)$ of system \eqref{rhopsi} with initial data $r(t_\ast)=r_\ast$, $\varphi(t_\ast)=\varphi_\ast$,
$ |r_\ast-a-t_\ast^{-1/(2q)}\varrho_\ast(t_\ast) |+ |\varphi_\ast-\kappa S(t_\ast)/\varkappa-\phi_\ast(t_\ast)|\leq \delta $
at some $t_\ast\geq t_0$, satisfies the inequality \eqref{rpunst} at some $t_e>t_\ast$.
\end{Th}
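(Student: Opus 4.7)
The plan is to adapt the Chetaev-function argument of Theorem~\ref{Th21} to the higher-order divergence regime dictated by assumption~\eqref{asst}. First, apply Theorem~\ref{Th1} with $N\geq N_0$ to rewrite the system as~\eqref{rhopsi}, form the $M$th partial sum $\varrho_{\ast,M}(t)$, $\phi_{\ast,M}(t)$ of~\eqref{assol} (which is a formal approximate solution of the truncated system~\eqref{trsys} by Lemma~\ref{Lem2}), and pass to deviation variables $u=\rho-\varrho_{\ast,M}(t)$, $v=\psi-\phi_{\ast,M}(t)$. The resulting system has the schematic form
\begin{gather*}
\dot u = A_{11}(t)\,u+A_{12}(t)\,v+F(u,v,S(t),t), \quad \dot v=A_{21}(t)\,u+A_{22}(t)\,v+G(u,v,S(t),t),
\end{gather*}
where the coefficients $A_{ij}(t)$ admit asymptotic expansions in powers of $t^{-1/(2q)}$, and the nonlinearities $F$, $G$ are quadratic in $(u,v)$ plus oscillatory tails of order $t^{-(N+1)/(2q)}$ and partial-sum residuals of order $t^{-M/(2q)}$.

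The key observation is that the matrix $A(t)$ has leading-order determinant $\det A(t)=-\nu_n\eta\,t^{-(n+1)/(2q)}+O(t^{-(n+2)/(2q)})$, which is positive under $\nu_n\eta<0$, while assumption~\eqref{asst} forces all traces at orders $t^{-k/(2q)}$ with $k<h$ to vanish, so $\operatorname{tr}A(t)=d_h\,t^{-h/(2q)}+O(t^{-(h+1)/(2q)})$. Hence $A(t)$ has a complex-conjugate pair of eigenvalues whose common real part equals $(d_h/2)\,t^{-h/(2q)}>0$ at leading order -- an unstable focus with an algebraically vanishing growth rate. I would build an anisotropic quadratic Chetaev function $V(u,v,t)=u^2+c\,t^{-(n-1)/(2q)}v^2$, with $c>0$ chosen so that the leading off-diagonal contributions in $\dot V$ cancel (the same weighting that underlies the stability analysis of Theorem~\ref{Th23}, but now exploited with the opposite sign), and establish the differential inequality
\begin{gather*}
\dot V \geq \frac{d_h}{2}\,t^{-\frac{h}{2q}}\,V + R(u,v,S(t),t)
\end{gather*}
in a region $\{V<\varepsilon^2\}$ for $t$ sufficiently large. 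Integrating the homogeneous part yields an exponential growth factor $\exp((d_h/(2(1-h/(2q))))\,t^{1-h/(2q)})$, which is unbounded since $h<2q$, so any trajectory with $V(t_\ast)>0$ must reach $V=\varepsilon^2$ at a finite $t_e>t_\ast$, which gives~\eqref{rpunst}.

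The main obstacle will be the careful estimation of the remainder $R$: it bundles quadratic tail contributions of order $t^{-(h+1)/(2q)}V$ from the next terms in $A(t)$, nonlinear contributions of order $V^{3/2}$ coming from $F$ and $G$, oscillatory tails controlled by $N$, and the residual error from truncating the series~\eqref{assol} at order $M$. To make the main term $(d_h/2)t^{-h/(2q)}V$ dominate, one must choose $N$ and $M$ large enough to kill the oscillatory and residual parts, and $\varepsilon$ small enough to suppress the nonlinear ones. The hypothesis $h+n-1<2q$ enters precisely at the balance between the main term $t^{-h/(2q)}V$ and the cross-contribution $t^{-(n-1)/(2q)}\cdot t^{-(h+1)/(2q)}V$ generated by differentiating the anisotropic weight along trajectories, ensuring that the effective growth rate stays of order $t^{-h/(2q)}$ and yields a finite escape time independent of the choice of $\delta\in(0,\varepsilon)$.
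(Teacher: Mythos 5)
Your overall strategy---pass to the deviations $u,v$ from the partial sums $\varrho_{\ast,M},\phi_{\ast,M}$, build an anti-Lyapunov (Chetaev-type) function, derive $\dot V\gtrsim t^{-h/(2q)}V$ and integrate to force escape---is exactly the paper's (which reduces Theorem~\ref{Th24} to the scheme of Theorem~\ref{Th21} run with the Lyapunov function \eqref{LFTh2} built from $H_M$). The genuine gap is in your choice $V=u^2+c\,t^{-(n-1)/(2q)}v^2$: this diagonal quadratic does not satisfy the claimed inequality $\dot V\geq \tfrac{d_h}{2}\,t^{-h/(2q)}V+R$. Two things go wrong. First, choosing $c=-\nu_n/\eta>0$ to cancel the off-diagonal pair $\nu_n t^{-n/(2q)}$, $\eta t^{-1/(2q)}$ still leaves the traceless diagonal part of the linearisation: assumption \eqref{asst} gives $\partial_\psi\Omega_n(0,\psi_0)=-\lambda_n$, so $\dot V$ acquires the indefinite term $2\lambda_n t^{-n/(2q)}\bigl(u^2-c\,t^{-(n-1)/(2q)}v^2\bigr)$, which for $\lambda_n\neq 0$ (possible whenever $n\geq 2$, since $\Lambda_n$ is a polynomial of degree $n-1$ in $\rho$) is of order $t^{-n/(2q)}\Delta^2\gg t^{-h/(2q)}\Delta^2$ and swamps the term you want. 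Second, even when $\lambda_n=0$, the divergence $d_h t^{-h/(2q)}$ enters the equations only through the $v$-component (the $\Upsilon$ term), so the genuinely sign-definite part of $\dot V$ is proportional to $t^{-h/(2q)}v^2$; it vanishes on the line $v=0$, where $V=u^2>0$, so no pointwise inequality $\dot V\geq\gamma t^{-h/(2q)}V$ can hold there.

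The paper avoids both problems by taking $({\hbox{\rm sgn}}\,\eta)\,t^{n/(2q)}\mathcal H$ itself as the core of the functional (so the Hamiltonian part of the flow contributes nothing at leading order, whatever $\lambda_n$ is) and by adding the small cross term $t^{-(h-1)/(2q)}d_h({\hbox{\rm sgn}}\,\eta)\,uv$ (with a $\lambda_n v^2$ correction when $h\leq n$); differentiating that cross term against $\dot v\ni t^{-1/(2q)}\eta u$ manufactures the missing $t^{-h/(2q)}d_h|\eta|u^2$ contribution and makes $\mathcal D_1$ definite of order $t^{-h/(2q)}\Delta^2$. Your proposal as written is missing this cross term, which is the essential ingredient. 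Two smaller inaccuracies: the eigenvalues of $A(t)$ need not form a complex pair (for $n>m$ one can have $2h\leq n+1$, making them real), although instability holds regardless; and with the correctly normalised function one gets $\dot{\mathcal L}\geq\tilde\gamma\, t^{-(h+n-1)/(2q)}\mathcal L$ because $\mathcal L\leq t^{(n-1)/(2q)}L_+\Delta^2$, so the escape factor is $\exp\{c\,t^{1-(h+n-1)/(2q)}\}$ rather than $\exp\{c\,t^{1-h/(2q)}\}$---this is precisely where the hypothesis $h+n-1<2q$ is used, not merely $h<2q$.
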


Thus, under the assumptions of Theorems~\ref{Th2} and~\ref{Th23}, it follows that there exists a stable phase locking regime in system \eqref{PS} with $r(t)\approx a$ and $\varphi(t)\approx \kappa S(t)/\varkappa+\psi_0$ as $t\to\infty$. 

Consider finally the case when, instead of \eqref{aszero}, the following assumption holds:
\begin{gather}\label{asnzero} 
\Lambda_n(\rho,\psi)\neq 0 \quad \forall\, (\rho,\psi)\in\mathbb R^2.
\end{gather}
Then we have 
\begin{Th}\label{Th3}
Let system \eqref{PS} satisfy \eqref{fgas}, \eqref{Sform}, \eqref{rc}, and assumptions \eqref{asn} and \eqref{asnzero} hold. Then the solutions of system \eqref{rhopsi} exit from any bounded domain in a finite time. 
\end{Th}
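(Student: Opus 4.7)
The plan is to use assumption \eqref{asnzero} to establish a signed, non-integrable drift of $\rho$ along any trajectory that stays in a bounded set, which will contradict boundedness and force exit in finite time.

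First, since $\Lambda_n(\rho,\psi)$ is continuous on $\mathbb{R}^2$, $2\pi$-periodic in $\psi$ by construction, and nowhere zero, it must have constant sign. I take $\Lambda_n > 0$ without loss of generality. Given a bounded domain $B\subset\mathbb{R}^2$, choose $R_0$ so that $B\subset\{(\rho,\psi):|\rho|\le R_0\}$; periodicity in $\psi$ allows the infimum of $\Lambda_n$ over $\{|\rho|\le R_0,\psi\in\mathbb{R}\}$ to be taken over a compact set, giving a strictly positive lower bound $c_0 > 0$.

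Second, I would dominate all error contributions on this strip. Each polynomial $\Lambda_k(\rho,\psi)$ with $n < k \le N$ is bounded on $\{|\rho|\le R_0\}$ (polynomial in $\rho$ of degree $k-1$, periodic in $\psi$), contributing at most $O(t^{-(n+1)/(2q)})$, and by \eqref{tildeLO}--\eqref{LOas} the remainder obeys $\tilde\Lambda_N = O(t^{-(N+1)/(2q)})$ uniformly on the same set. Fixing any $N \ge n$ and a sufficiently large threshold $T_0$, this yields the pointwise bound
$$\Lambda_N(\rho,\psi,S(t),t) \ge \tfrac{1}{2}c_0\, t^{-n/(2q)}, \qquad |\rho|\le R_0,\ t\ge T_0.$$

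Finally, suppose for contradiction that a trajectory $(\rho(t),\psi(t))$ remained in $B$ for all $t \ge t_\ast$. Setting $T_1 := \max\{t_\ast, T_0\}$ and integrating the preceding inequality along the trajectory gives
$$\rho(t) \ge \rho(T_1) + \tfrac{1}{2}c_0 \int_{T_1}^t s^{-n/(2q)}\,ds, \qquad t\ge T_1.$$
Because $n\in[1,2q]$, the exponent satisfies $n/(2q)\le 1$, so the integral diverges as $t\to\infty$ (algebraically like $t^{1-n/(2q)}$ if $n<2q$, logarithmically if $n=2q$). Hence $\rho(t)\to +\infty$, contradicting $(\rho(t),\psi(t))\in B$; the case $\Lambda_n<0$ is symmetric and gives $\rho(t)\to -\infty$. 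No serious obstacle arises: the only verification needed is that the error terms are genuinely subdominant on $\rho$-bounded strips, which is immediate from the polynomial growth structure in \eqref{LOas} and the asymptotic scale guaranteed by Theorem~\ref{Th1}.
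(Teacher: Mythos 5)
Your proposal is correct and follows essentially the same route as the paper: both exploit the fact that the non-vanishing (hence, by continuity and periodicity, sign-definite and uniformly bounded away from zero on any $\rho$-bounded strip) term $\Lambda_n$ dominates the higher-order and remainder terms, giving $|d\rho/dt|\geq C\,t^{-n/(2q)}$ there, and then integrate this non-integrable bound (using $n\leq 2q$) to force $|\rho|$ out of any bounded set. The only difference is cosmetic: the paper adds a second step showing that $\psi$ also drifts once $|\rho|$ is large (which supports the remark following the theorem about the phase mismatch), but this is not needed for the stated conclusion, and your explicit appeal to the constancy of the sign of $\Lambda_n$ is in fact the cleaner way to justify the integration step.
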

In this case, $\varphi(t)$ for solutions of system \eqref{PS} can significantly differ from the phase $\kappa S(t)/\varkappa$, and the solutions with $r(t)\approx a$ does not occur.  

\section{Change of variables}
\label{sec3}

\subsection{Amplitude residual and phase shift}
Substituting \eqref{ch1} into \eqref{PS} yields the following system:
\begin{gather}\label{RPsys}
\frac{dR}{dt}=F(R,\Psi,S(t),t), \quad \frac{d\Psi}{dt}=G(R,\Psi,S(t),t),
\end{gather}
where
\begin{gather}\label{FG}
\begin{split}
&F(R,\Psi,S,t)\equiv t^{\frac{1}{2q}} f\left(a+t^{-\frac{1}{2q}} R,\frac{\kappa}{\varkappa}S +\Psi,S,t\right)+t^{-1}\frac{R}{2q}, \\
& G(R,\Psi,S,t)\equiv \omega\left(a+t^{-\frac{1}{2q}} R\right)-\frac{\kappa}{\varkappa}S'(t)+g\left(a+t^{-\frac{1}{2q}} R,\frac{\kappa}{\varkappa}S +\Psi,S,t\right).
\end{split}
\end{gather}
It follows from \eqref{fgas} and \eqref{Sform} that the functions $F(R,\Psi,S,t)$ and $G(R,\Psi,S,t)$ have the following asymptotic expansion:
\begin{gather}\label{FGas}\begin{split}
F(R,\Psi,S,t)&\sim \sum_{k=1}^\infty t^{-\frac{k}{2q}}F_k(R,\Psi,S), \\ 
G(R,\Psi,S,t)&\sim \sum_{k=1}^\infty t^{-\frac{k}{2q}}G_k(R,\Psi,S), \quad 
t\to\infty,
\end{split}
\end{gather}
where the coefficients
\begin{align*}
& F_k(R,\Psi,S)\equiv \sum_{\substack{ i+2j=k+1 \\ i\geq 0, j\geq 1}} \partial_r^i f_j\left(a,\frac{\kappa}{\varkappa}S +\Psi,S\right) \frac{R^i}{i!} + \delta_{k,2q}\frac{ R}{2q},\\
& G_k(R,\Psi,S)\equiv \partial_r^k\omega(a)\frac{R^k}{k!}-\frac{\kappa}{\varkappa} s_{k/2}\left(1-\frac{k}{2q}+\delta_{k,2q}\right)+\sum_{\substack{ i+2j=k \\ i\geq 0, j\geq 1}} \partial_r^i g_j\left(a,\frac{\kappa}{\varkappa}S +\Psi,S\right) \frac{R^i}{i!}
\end{align*}
are $2\pi$-periodic in $\Psi$ and $2\pi \varkappa$-periodic in $S$. Here $\delta_{k,2q}$ is the Kronecker delta. We set $s_j=0$ for $j>q$ and $s_{k/2}=0$ for odd $k$. 
Note that 
$F_1(R,\Psi,S)\equiv f_1(a,\kappa S/\varkappa+\Psi,S)$ and 
$G_1(R,\Psi,S)\equiv \eta R$.
Since $F_k(R,\Psi,S)=\mathcal O(R^{k-1})$ and $G_k(R,\Psi,S)=\mathcal O(R^{k})$ as $R\to\infty$ uniformly for all $(\Psi,S)\in\mathbb R^2$, we see that the asymptotic approximations \eqref{FGas} for the right-hand side of system \eqref{FG} are applicable for all $(R,\Psi,t)\in\mathcal D^0_{\varsigma,\tau}$ with $\varsigma \in (0,1)$ and some $\tau\geq 1$. 

\subsection{Near identity transformation}
We see that system \eqref{RPsys} is asymptotically autonomous with the limiting system 
\begin{gather*}
\frac{d\hat R}{dt}=0, \quad \frac{d\hat\Psi}{dt}=0, \quad \frac{d\hat S}{dt}=s_0.
\end{gather*}
Hence, the phase $S(t)$ can be considered as an analogue of a fast variable as $t\to\infty$ in comparison with the solutions $R(t)$, $\Psi(t)$ of system \eqref{RPsys}. This can be used to simplify the system by averaging the equations with respect to the variable $S(t)$. Note that such method is effective in similar problems with a small parameter (see, for instance,~\cite{BM61,AKN06}). The transformation is sought in the following form:
\begin{gather}\label{rpch}\begin{split}
	U_N(R,\Psi,S,t)&=R+\sum_{k=1}^N t^{-\frac{k}{2q}} u_k(R,\Psi,S), \\
	V_N(R,\Psi,S,t)&=\Psi+\sum_{k=1}^N t^{-\frac{k}{2q}} v_k(R,\Psi,S)
\end{split}
\end{gather}
with some integer $N\geq 1$. The coefficients $u_k(R,\Psi,S)$, $v_k(R,\Psi,S)$ are assumed to be periodic with respect to $\Psi$ and $S$, and are chosen in such a way that the system in the new variables 
\begin{gather*}
\rho(t)\equiv U_N(R(t),\Psi(t),S(t),t), \quad  
\psi(t)\equiv V_N(R(t),\Psi(t),S(t),t)
\end{gather*}
takes the form \eqref{rhopsi}, where the right-hand sides do not depend explicitly on $S(t)$ at least in the first $N$ terms of the asymptotics as $t\to\infty$. Differentiating \eqref{rpch} with respect to $t$ and taking into account \eqref{Sform}, \eqref{RPsys} and \eqref{FGas}, we get
\begin{gather}\label{drpas}
\begin{split}
\frac{d}{dt}\begin{pmatrix} U_N  \\ V_N \end{pmatrix} \equiv &\left(\frac{dR}{dt}\partial_R+\frac{d\Psi}{dt}\partial_\Psi+\frac{dS}{dt}\partial_S+\partial_t\right)\begin{pmatrix} U_N  \\ V_N \end{pmatrix}\\
\sim &
\sum_{k=1}^\infty t^{-\frac{k}{2q}} \left\{\begin{pmatrix} F_k \\ G_k \end{pmatrix}+s_0\partial_S \begin{pmatrix} u_k \\ v_k \end{pmatrix}\right\}\\& + 
\sum_{k=1}^\infty t^{-\frac{k}{2q}}\sum_{j=1}^{k-1}\left\{ F_j\partial_R + G_j\partial_\Psi+s_{j/2}\left(1-\frac{j}{2q}+\delta_{j,2q}\right)\partial_S+\delta_{j,2q} \frac{2q-k}{2q}\right\}\begin{pmatrix} u_{k-j} \\ v_{k-j} \end{pmatrix}
\end{split}
\end{gather}
as $t\to\infty$, where it is assumed that $u_k(R,\Psi,S)\equiv v_k(R,\Psi,S)\equiv 0$ for $k\leq 0$ and $k>N$. Comparing the coefficients of powers of $t^{-1/2q}$ in \eqref{rhopsi} and \eqref{drpas} yields
\begin{gather}\label{ukvk}
s_0\partial_S \begin{pmatrix} u_k \\ v_k \end{pmatrix}=\begin{pmatrix} \Lambda_k(R,\Psi)-F_k(R,\Psi,S) +\tilde F_k(R,\Psi,S)\\ \Omega_k(R,\Psi)-G_k(R,\Psi,S)+\tilde G_k(R,\Psi,S) \end{pmatrix}, \quad k=1,\dots, N,
\end{gather} 
where the functions $\tilde F_k(R,\Psi,S)$, $\tilde G_k(R,\Psi,S)$ are expressed in terms of $\{u_{j}, v_{j},\Lambda_{j}, \Omega_{j}\}_{j=1}^{k-1}$ by the following formulas:
\begin{align*}
\begin{pmatrix}
\tilde F_1 \\ \tilde G_1
\end{pmatrix} \equiv 
& \begin{pmatrix}
0\\ 0
\end{pmatrix},\\
\begin{pmatrix}
\tilde F_2 \\ \tilde G_2
\end{pmatrix} \equiv 
& 
(u_1\partial_R+v_1\partial_\Psi)
\begin{pmatrix}
\Lambda_1\\ \Omega_1
\end{pmatrix}
-(F_1\partial_R + G_1\partial_\Psi )\begin{pmatrix} u_{1} \\ v_{1} \end{pmatrix}, \\
\begin{pmatrix}
\tilde F_3 \\ \tilde G_3
\end{pmatrix} \equiv 
& 
\sum_{i+j=3}(u_i\partial_R+v_i\partial_\Psi)
\begin{pmatrix}
\Lambda_j\\ \Omega_j
\end{pmatrix} + \frac{1}{2}\left(u_1^2\partial_R^2+2u_1v_1 \partial_R\partial_\Psi+v_1^2 \partial_\Psi^2\right)\begin{pmatrix}
\Lambda_1\\ \Omega_1
\end{pmatrix} \\
& -\sum_{j=1}^2 \left\{ F_j\partial_R + G_j\partial_\Psi+s_{j/2}\left(1-\frac{j}{2q}+\delta_{j,2q}\right)\partial_S+\delta_{j,2q} \frac{2q-3}{2q}\right\}\begin{pmatrix} u_{3-j} \\ v_{3-j} \end{pmatrix},\\
\begin{pmatrix}
\tilde F_k \\ \tilde G_k
\end{pmatrix} \equiv 
& 
\sum_{  m_1+\cdots im_i+n_1+\cdots l n_l+j=k } C_{i,l,m_1,\dots,m_i,n_1,\dots,n_l} u_1^{m_1} \cdots u_i^{m_i} v_1^{n_1} \cdots v_l^{n_l} \partial_R^{m_1+\cdots+m_i}\partial_\Psi^{n_1+\cdots+n_l}
\begin{pmatrix}
\Lambda_j\\ \Omega_j
\end{pmatrix}  \\
& -\sum_{j=1}^{k-1} \left\{ F_j\partial_R + G_j\partial_\Psi+s_{j/2}\left(1-\frac{j}{2q}+\delta_{j,2q}\right)\partial_S+\delta_{j,2q} \frac{2q-k}{2q}\right\}\begin{pmatrix} u_{k-j} \\ v_{k-j} \end{pmatrix}
\end{align*}
with some constant parameters $C_{i,l,m_1,\dots,m_i,n_1,\dots,n_l}$. To avoid the appearance of secular terms in \eqref{rpch} and guarantee the existence of periodic solutions to system \eqref{ukvk}, we take
\begin{align*}
\Lambda_k(R,\Psi)&\equiv \langle F_k(R,\Psi,S)-\tilde F_k(R,\Psi,S)\rangle_{\varkappa S},\\
\Omega_k(R,\Psi)&\equiv \langle G_k(R,\Psi,S)-\tilde G_k(R,\Psi,S)\rangle_{\varkappa S}.
\end{align*}
In particular, $\Lambda_1(R,\Psi)\equiv \langle f_1(a,\kappa S/\varkappa+\Psi,S)\rangle_{\varkappa S}$ and $\Omega_1(R,\Psi)\equiv \eta R$.
Hence, system \eqref{ukvk} is solvable in the class of functions that are $2\pi\varkappa$-periodic in $S$ with zero mean. Moreover, it is not hard to check that $u_k(R,\Psi,S)$, $v_k(R,\Psi,S)$, $\Lambda_k(R,\Psi)$, $\Omega_k(R,\Psi)$ are $2\pi$-periodic in $\Psi$ and 
\begin{align*}
&\tilde F_k(R,\Psi,S)= \mathcal O(R^{k-1}),& \quad & \Lambda_k(R,\Psi)= \mathcal O(R^{k-1}), &\quad & u_k(R,\Psi,S)= \mathcal O(R^{k-1}), \\
&\tilde G_k(R,\Psi,S)= \mathcal O(R^{k}), & \quad & \Omega_k(R,\Psi)= \mathcal O(R^{k}), &\quad & v_k(R,\Psi,S)= \mathcal O(R^{k})
\end{align*}
as $R\to\infty$ uniformly for all $(\Psi,S)\in\mathbb R^2$.  This together with \eqref{rpch} implies that for all $\epsilon\in (0,\mathcal R)$ there exists $t_0\geq 1$ such that
\begin{align*}
&|U_N(R,\Psi,S,t)-R|\leq \epsilon, &\quad &|\partial_R U_N(R,\Psi,S,t)-1|\leq \epsilon, &\quad &|\partial_\Psi U_N(R,\Psi,S,t)|\leq \epsilon,\\
&|V_N(R,\Psi,S,t)-\Psi|\leq \epsilon, &\quad & |\partial_R V_N(R,\Psi,S,t)|\leq \epsilon, &\quad & |\partial_\Psi V_N(R,\Psi,S,t)-1|\leq \epsilon 
\end{align*}
for all $(R,\Psi,t)\in\mathcal D^0_{\varsigma,t_0}$, $S\in\mathbb R$ and $\varsigma\in (0,1)$. Thus, \eqref{ch2} is invertible.  Denote by $R= u(\rho, \psi, t)$, $\Psi=v(\rho,\psi,t)$ the corresponding inverse
transformation defined for all $(\rho,\psi,t)\in\mathcal D^\epsilon_{\varsigma,t_0}$.
Then, 
\begin{align*}
\begin{pmatrix}
\tilde \Lambda_N(\rho,\psi,S,t)\\
\tilde \Omega_N(\rho,\psi,S,t)
\end{pmatrix}
\equiv &
\left(\partial_t+F \partial_R+G\partial_\Psi\right)
\begin{pmatrix}
U_N\\
V_N
\end{pmatrix}\Big|_{R=u(\rho, \psi, t), \Psi=v(\rho,\psi,t)}  - \sum_{k=1}^N t^{-\frac{k}{2q}} \begin{pmatrix}
\Lambda_k(\rho,\psi)\\
\Omega_k(\rho,\psi).
\end{pmatrix}
\end{align*}
Combining this with \eqref{drpas}, we get \eqref{LOas}.

Thus, we obtain the proof of Theorem~\ref{Th1} with $\tilde \rho_N(R,\Psi,t)\equiv  U_N(R,\Psi,S(t),t)-R$, $\tilde \psi_N(R,\Psi,t)\equiv  V_N(R,\Psi,S(t),t)-\Psi$.

\section{Analysis of the model system}\label{sec4}

\begin{proof}[Proof of Lemma~\ref{Lem01}]
Substituting $\hat \varrho(t)=u(t)$, $\hat \phi(t)=\psi_0+v(t) $ into \eqref{limsys} yields the following system with an equilibrium at $(0, 0)$:
\begin{gather}\label{limsys0}
\frac{du}{dt}=t^{-\frac{n}{2q}}\Lambda_n(u,\psi_0+v), \quad \frac{dv}{dt}=t^{-\frac{1}{2q}}\eta u.
\end{gather}
Consider the linearised system 
\begin{gather*}
\frac{d {\bf z}}{dt}= {\bf M}(t)
{\bf z}, \quad 
{\bf M}(t)\equiv \begin{pmatrix} t^{-\frac{n}{2q}}\lambda_n & t^{-\frac{n}{2q}} \nu_n \\
t^{-\frac{1}{2q}} \eta & 0
\end{pmatrix},
\quad
{\bf z}=\begin{pmatrix}u\\v\end{pmatrix}.
\end{gather*}
The roots of the characteristic equation $|{\bf M}(t)-\mu {\bf I}|=0$ are given by
\begin{gather*}
\mu_\pm(t)=\frac{t^{-\frac{n}{2q}}}{2}  \left(\lambda_n \pm \sqrt{4\nu_n\eta t^{\frac{n-1}{2q}}+\lambda_n^2}\right).
\end{gather*}
We see that if $\nu_n \eta>0$, the eigenvalues $\mu_+(t)$ and $\mu_-(t)$ are real of different signs. This implies that the equilibrium is of saddle type and the fixed point $(0,\psi_0)$ of system \eqref{limsys} is unstable. 

Let us show that in the opposite case, when $\nu_n \eta<0$, the stability of the equilibrium depends on the sign of $\lambda_n\neq 0$. Consider first the case $n=1$. We use 
\begin{gather}\label{LF1}
L_1(u,v)\equiv \frac{1}{2}\left(|\eta| u^2+|\nu_1|v^2\right)+ \chi_1  u v
\end{gather}
as a Lyapunov function candidate for system \eqref{limsys0}, where $\chi_1\in\mathbb R$ is a parameter such that
\begin{gather}\label{chi1}
	{\hbox{\rm sgn} }\, \chi_1 = {\hbox{\rm sgn} }\, (\nu_1  \lambda_1), \quad 
	|\chi_1|=\frac 12\min\left\{|\eta|,|\nu_1|,\frac{2|\lambda_1 \eta \nu_1|}{\lambda_1^2+2|\eta\nu_1|}\right\}.
\end{gather}
It can easily be checked that there exists $\Delta_0>0$ such that 
\begin{gather}\label{L1ineq}
	L_- \Delta^2\leq L_1(u,v)\leq L_+ \Delta^2
\end{gather}
for all $(u,v)\in\mathbb R^2$ such that $\Delta=\sqrt{u^2+v^2}\leq \Delta_0$, where $L_-=\min\{|\eta|-|\chi_1|,|\nu_1|-|\chi_1|\}/4>0$ and $L_+=\max\{|\eta|+|\chi_1|,|\nu_1|+|\chi_1|\}$. The derivative of $L_1(u,v)$ with respect to $t$ along the trajectories of the system satisfies
\begin{gather*}
\frac{dL_1}{dt}\Big|_\eqref{limsys0} = t^{-\frac{1}{2q}}\left( (\lambda_1-({\hbox{\rm sgn} }\, \lambda_1)|\chi_1|) |\eta|u^2+({\hbox{\rm sgn} }\, \lambda_1)|\chi_1 \nu_1|v^2+\chi_1 \lambda_1  uv +\mathcal O(\Delta^3)\right), \quad \Delta\to 0.
\end{gather*}
Using Young’s inequality, we obtain
\begin{align*}
\frac{dL_1}{dt}\Big|_\eqref{limsys0} &\geq  t^{-\frac{1}{2q}} \left\{A_1 u^2+B_1 v^2+\mathcal O(\Delta^3)\right\}\quad  \text{if} \quad\lambda_1>0,\\
\frac{dL_1}{dt}\Big|_\eqref{limsys0} & \leq  -t^{-\frac{1}{2q}} \left\{ A_1 u^2+B_1 v^2+\mathcal O(\Delta^3)\right\}\quad   \text{if} \quad \lambda_1<0
\end{align*}
with positive parameters 
\begin{gather}\label{A1B1} 
A_1=\frac{|\chi_1| (\lambda_1^2+2|\nu_1 \eta|) }{2 |\nu_1|}, \quad B_1=\frac{|\chi_1 \nu_1|}{2}.
\end{gather}
Hence, there exists $0<\Delta_1\leq \Delta_0$ such that 
\begin{align} 
\label{dL1}&\frac{dL_1}{dt}\Big|_\eqref{limsys0}  \geq  \gamma_1 t^{-\frac{1}{2q}} L_1\geq 0 \quad \text{if} \quad \lambda_1>0, \\
\label{dL2}&\frac{dL_1}{dt}\Big|_\eqref{limsys0}   \leq  -\gamma_1 t^{-\frac{1}{2q}} L_1\leq 0 \quad \text{if} \quad \lambda_1<0
\end{align}
for all $(u,v)\in\mathbb R^2$ such that $\Delta\leq \Delta_1$ with $\gamma_1=\min\{A_1,B_1\}/(2 L_+)>0$. 
If $\lambda_1>0$, then integrating \eqref{dL1} with respect to $t$ and taking into account \eqref{L1ineq}, we obtain the instability of the equilibrium $(0,0)$ in system \eqref{limsys0} and the fixed point $(0,\psi_0)$ in system \eqref{limsys}. Indeed, there exists $\epsilon \in (0, \Delta_1)$ such that for all $\delta\in (0,\epsilon)$ the solution $(u(t),v(t))$ of \eqref{limsys0} with initial data $\sqrt{u^2(t_0)+v^2(t_0)}=\delta$ leaves the domain $\{(u,v)\in\mathbb R^2: \Delta\leq \epsilon\}$ as $t\geq t_1$, where
\begin{gather*}
t_1^{1-\frac{1}{2q}}=t_0^{1-\frac{1}{2q}}+\left(\frac{2q-1}{2q \gamma_1}\right)\log \left(\frac{L_+\epsilon^2}{L_-\delta^2}\right).
\end{gather*}
If $\lambda_1<0$, then it follows from \eqref{dL2} that for all $\epsilon\in (0,\Delta_1)$ there exists $\delta\in (0,\epsilon)$ such that the solution $(u(t),v(t))$ of \eqref{limsys0} with initial data $\sqrt{u^2(t_0)+v^2(t_0)}\leq \delta$ cannot exit from the domain $\{(u,v)\in\mathbb R^2: \Delta\leq \epsilon\}$. Hence, the equilibrium $(0,0)$ of system \eqref{limsys0} and the fixed point $(0,\psi_0)$ of system \eqref{limsys} are stable. Moreover, by integrating \eqref{dL2}, we obtain the inequality 
\begin{gather*}
L_1(u(t),v(t))\leq L_1(u(t_0),v(t_0)) \exp\left\{-\frac{2 q\gamma_1 }{2q-1}\left(t^{1-\frac{1}{2q}}-t_0^{1-\frac{1}{2q}}\right)\right\}, \quad t\geq t_0.
\end{gather*}
Combining this with \eqref{L1ineq}, we get asymptotic stability of the equilibrium.

Let $n\geq 2$. Consider
\begin{gather*}
L_n(u,v,t)\equiv t^{\frac{n-1}{2q}}\frac{|\eta| }{2} u^2+({\hbox{\rm sgn}}\, \nu_n) \int\limits_0^v \Lambda_n(u,\psi_0+w)\,dw  + t^{-\frac{n-1}{2q}}\left(\frac{\lambda_n^2 v^2}{2|\eta|}+\lambda_n ({\hbox{\rm sgn}}\, \eta) u v\right)
\end{gather*}
as a Lyapunov function candidate. 
Note that there exist $\Delta_0>0$ and $t_1\geq t_0$ such that 
\begin{gather*}
	\frac{1}{4}\left(|\nu_n|u^2+|\eta|v^2\right)\leq L_n(u,v,t)\leq t^{\frac{n-1}{2q}} \left(|\nu_n|u^2+|\eta|v^2\right)
\end{gather*}
for all $(u,v,t)\in\mathbb R^3$ such that $\Delta\leq \Delta_0$ and $t\geq t_1$. The derivative of $L_n(u,v,t)$ with respect to $t$ along the trajectories of system \eqref{limsys} satisfies 
\begin{gather*}
\frac{dL_n}{dt}\Big|_\eqref{limsys0} = \lambda_n t^{-\frac{n}{2q}}\left( 
  |  \eta|u^2
+ |\nu_n|v^2 
+\mathcal O(\Delta^3)+\mathcal O(\Delta^2)\mathcal O(t^{-\frac{1}{2q}})\right) 
\end{gather*}
as $\Delta\to 0$ and $t\to\infty$. Therefore, there exists $0<\Delta_1\leq \Delta_0$ and $t_2\geq t_1$ such that 
\begin{gather}\label{dLn00}
\begin{split} 
			&\frac{dL_n}{dt}\Big|_\eqref{limsys0}  \geq \gamma_n t^{-\frac{n}{2q}} \left(|\nu_n|u^2+|\eta|v^2\right) \geq  \gamma_n t^{-\frac{2n-1}{2q}} L_n\geq 0 \quad \text{if} \quad \lambda_n>0, \\
			&\frac{dL_n}{dt}\Big|_\eqref{limsys0} \leq -\gamma_n t^{-\frac{n}{2q}} \left(|\nu_n|u^2+|\eta|v^2\right)  \leq -\gamma_n t^{-\frac{2n-1}{2q}} L_n \leq 0 \quad \text{if} \quad \lambda_n<0
		\end{split}
\end{gather}
for all $(u,v,t)\in\mathbb R^3$ such that $\Delta\leq \Delta_1$ and $t\geq t_2$ with $\gamma_n=|\lambda_n|/2>0$. Integrating \eqref{dLn00}, we obtain the following inequalities:
\begin{align*}
&|\nu_n|u^2(t)+|\eta|v^2(t)\geq C t^{-\frac{n-1}{2q}} \exp \left\{ \frac{2q \gamma_n}{2q-2n+1}\left(t^{1-\frac{2n-1}{2q}}-t_2^{1-\frac{2n-1}{2q}}\right)\right\}\quad \text{if} \quad \lambda_n>0,\\
&|\nu_n|u^2(t)+|\eta|v^2(t)\leq 4C \exp \left\{ -\frac{2q \gamma_n}{2q-2n+1}\left(t^{1-\frac{2n-1}{2q}}-t_2^{1-\frac{2n-1}{2q}}\right)\right\}\quad \text{if} \quad \lambda_n<0
\end{align*}
with a positive parameter $C=L_n(u(t_2),v(t_2),t_2)>0$. Thus, if $\lambda_n>0$ and $n\leq q$, the equilibrium $(0,\psi_0)$ of system \eqref{limsys} is unstable. If $n\leq q$ and $\lambda_n<0$, the equilibrium is asymptotically stable. Finally, if $n>q$ and $\lambda_n<0$, the equilibrium is (non-asymptotically) stable. 
\end{proof}

\begin{proof}[Proof of Lemma~\ref{Lem1}]
Substituting the series \eqref{assol} into \eqref{trsys} and equating the terms of like powers of $t$ yield the chain of linear equations for the coefficients $\varrho_k$, $\phi_k$
\begin{gather}\label{rpk}
\begin{pmatrix}  
	\eta & 0 \\
\displaystyle	 \lambda_n & \nu_n 
\end{pmatrix} 
\begin{pmatrix} 
 \varrho_k \\ \phi_k 
\end{pmatrix} 
= 
\begin{pmatrix} 
 \mathfrak  F_k \\ \mathfrak  G_k 
\end{pmatrix},
\end{gather}
where $\mathfrak  F_k$, $\mathfrak G_k$ are expressed through $\varrho_1,\phi_1, \dots, \varrho_{k-1}, \phi_{k-1}$. For instance,
\begin{align*}
&\mathfrak  F_1=-\Omega_m(0,\psi_0), \\ 
& \mathfrak  G_1=-\Lambda_{n+1}(0,\psi_0),\\
&\mathfrak  F_2=-\Omega_{m+1}(0,\psi_0)-\left(\varrho_1\partial_\rho +\phi_1 \partial_\psi\right)\Omega_{m}(0,\psi_0), \\ 
&\mathfrak  G_2=-\Lambda_{n+2}(0,\psi_0)-\sum_{i+j=2}\left(\varrho_i\partial_\rho +\phi_i \partial_\psi\right)\Lambda_{n+j}(0,\psi_0)-\frac{1}{2}\left(\varrho_1^2\partial^2_\rho+2\varrho_1\phi_1\partial_\rho\partial_\psi+\phi_1^2\partial^2_\psi\right)\Lambda_{n}(0,\psi_0).
\end{align*}
Since $\nu_n\eta\neq 0$, we see that system \eqref{rpk} is solvable. 

To prove the existence of a solution of system \eqref{trsys} with such asymptotic behaviour, consider the following functions:
\begin{gather}\label{sumM}
\varrho_{\ast,M}(t)\equiv\sum_{k=1}^{n+M+1} t^{-\frac{k+m-2}{2q}} \varrho_k, \quad 
\phi_{\ast,M}(t)\equiv\psi_0+\sum_{k=1}^{n+M+1} t^{-\frac{k}{2q}}\phi_k
\end{gather}
with some $M\in\mathbb Z_+$. By construction, 
\begin{gather}
	\label{Zeq}
		\begin{split}
&Z_{\varrho}(t)\equiv  \varrho_{\ast,M}'(t)-\hat\Lambda_N(\varrho_{\ast,M}(t),\phi_{\ast,M}(t),t)=\mathcal O\left(t^{-\frac{2n+M+2}{2q}}\right),\\
&Z_{\phi}(t)\equiv \phi_{\ast,M}'(t)-\hat\Omega_N(\varrho_{\ast,M}(t),\phi_{\ast,M}(t),t)=\mathcal O\left(t^{-\frac{n+m+M+1}{2q}}\right), \quad t\to\infty.
\end{split}
\end{gather}
Substituting
\begin{gather} \label{subsM}
\varrho(t)=\varrho_{\ast,M}(t)+t^{-\frac{M}{2q}} u(t), \quad 
\phi(t)=\phi_{\ast,M}(t)+t^{-\frac{M}{2q}} v(t)
\end{gather}
into \eqref{trsys}, we obtain a perturbed near-Hamiltonian system
\begin{gather}\label{uvsys}
\frac{du}{dt}=-\partial_v \mathcal  H_M(u,v,t) + \xi_M(t), \quad 
\frac{dv}{dt}=\partial_u \mathcal  H_M(u,v,t)+\Upsilon_M(u,v,t) + \zeta_M(t), 
\end{gather}
with  
\begin{align*}
&\mathcal H_M(u,v,t) \equiv \int\limits_0^u \mathcal G_M(w,0,t)\,dw - \int\limits_0^v \mathcal F_M(u,w,t)\,dw, \\ 
&\Upsilon_M(u,v,t)  \equiv  \int\limits_0^v \left(\partial_u\mathcal F(u,w,t)+\partial_v\mathcal G(u,w,t)\right)\,dw
\end{align*}
and perturbations
\begin{gather*}
 \xi_M(t)\equiv - t^{\frac{M}{2q}}Z_\varrho(t), \quad \zeta_M(t)\equiv - t^{\frac{M}{2q}}Z_\phi(t),
\end{gather*}
where 
\begin{align*}
\mathcal  F_M(u,v,t)&\equiv t^{\frac{M}{2q}}\left(\hat\Lambda_N(\varrho_{\ast,M}(t)+t^{-\frac{M}{2q}} u,\phi_{\ast,M}(t)+t^{-\frac{M}{2q}} v,t)-\hat\Lambda_N(\varrho_{\ast,M}(t),\phi_{\ast,M}(t),t)\right) + \frac{M}{2q} t^{-1} u, \\
\mathcal  G_M(u,v,t)&\equiv t^{\frac{M}{2q}}\left(\hat\Omega_N(\varrho_{\ast,M}(t)+t^{-\frac{M}{2q}} u,\phi_{\ast,M}(t)+t^{-\frac{M}{2q}} v,t)-\hat\Omega_N(\varrho_{\ast,M}(t),\phi_{\ast,M}(t),t)\right) + \frac{M}{2q} t^{-1} v.
\end{align*}
It follows from  \eqref{asn}, \eqref{aszero} and \eqref{Zeq} that
\begin{gather}\label{HU1}
\begin{split}
& \mathcal  H_M(u,v,t) =\left\{t^{-\frac{1}{2q}}\frac{\eta u^2}{2}-t^{-\frac{n}{2q}}\left(\lambda_n u v+ \frac{\nu_n v^2}{2}\right)\right\}\left(1+\mathcal O(t^{-\frac{1}{2q}})\right),\\
& \Upsilon_M(u,v,t)=v\left(\lambda_n t^{-\frac{n}{2q}}+\omega_m t^{-\frac{m}{2q}}\right) \left(1+\mathcal O(t^{-\frac{1}{2q}})\right),\\
& \xi_M(t)=\mathcal O(t^{-\frac{2n+2}{2q}}),\quad 
\zeta_M(t)=\mathcal O(t^{-\frac{n+m+1}{2q}})
\end{split}
\end{gather}
as $\Delta=\sqrt{u^2+v^2}\to 0$ and $t\to\infty$. Our goal is to show that there is a solution of system \eqref{uvsys} such that $u(t)=\mathcal O(1)$ and $v(t)=\mathcal O(1)$ as $t\to\infty$. This will ensure the existence of a solution to system \eqref{trsys} with asymptotic expansion \eqref{assol}. The proposed method is based on the stability analysis and on the construction of suitable Lyapunov functions. Note that a similar approach to justifying the asymptotics was used in~\cite{LK15}.

Note that if $\xi_M(t)\equiv \zeta_M(t)\equiv 0$, then system \eqref{uvsys} has the equilibrium $(0,0)$. Let us prove the stability of the near-Hamiltonian system with respect to the time-decaying perturbations $\xi_M(t)$ and $\zeta_M(t)$.

Consider first the case $1\leq n<m$. If $n=1$, we use $\mathcal L_M(u,v,t)\equiv L_1(u,v)$ defined by \eqref{LF1} and \eqref{chi1} as a Lyapunov function candidate for system \eqref{uvsys}. If $n>1$, we use
\begin{gather*}%\label{LFn}
\begin{split}
\mathcal L_M(u,v,t)\equiv &  ({\hbox{\rm sgn}}\, \eta)t^{\frac{n}{2q}} \mathcal H_M(u,v,t)  + t^{-\frac{n-1}{2q}}\lambda_n \mathcal K(u,v),
\end{split}
\end{gather*}
with $\mathcal K(u,v)\equiv {\lambda_n  v^2}/{2|\eta|}+  ({\hbox{\rm sgn}}\, \eta) u v$.
We see that
\begin{gather}\label{Lnas}
\mathcal L_M(u,v,t) = t^{\frac{n-1}{2q}}\frac{|\eta|u^2}{2}+ \frac{|\nu_n|v^2}{2}-({\hbox{\rm sgn}}\, \eta) \lambda_n uv +\mathcal O(\Delta^2)\mathcal O(t^{-\frac{1}{2q}}), \quad \Delta\to 0, \quad t\to\infty.
\end{gather}
Hence, there exist $\Delta_1>0$ and $t_1\geq  t_0 $ such that  
\begin{gather}\label{Lnineq1}
L_- \Delta^2 \leq\mathcal L_M(u,v,t)\leq t^{\frac{n-1}{2q}}L_+\Delta^2
\end{gather}
 for all $(u,v,t)\in\mathbb R^3$ such that $\Delta\leq \Delta_1$ and $t\geq t_1$ with some $L_\pm={\hbox{\rm const}}>0$. The derivative of $\mathcal L_M(u,v,t)$ with respect to $t$ along the trajectories of the system is given by 
\begin{gather}\label{dLn}
\frac{d \mathcal L_M}{dt}\Big|_\eqref{uvsys}\equiv \mathcal  D_{M,1}(u,v,t)+\mathcal  D_{M,2}(u,v,t),
\end{gather}
where $\mathcal  D_{M,1}\equiv \left(\partial_t -\partial_u \mathcal  H_M \partial_u +(\partial_v\mathcal H_M +\Upsilon_M) \partial_v \right)\mathcal L_M$ and $\mathcal  D_{M,2}\equiv \left(\xi_M \partial_u +\zeta_M \partial_v \right)\mathcal L_M$. It can easily be checked that 
\begin{align*}
& \mathcal  D_{M,1}(u,v,t)\leq -t^{-\frac{1}{2q}} \left(A_1 u^2 + B_1 v^2 +\mathcal O(\Delta^2)\mathcal O(t^{-\frac{1}{2q}})\right)    \text{if} \quad  n=1,\\
& \mathcal  D_{M,1}(u,v,t)= -  t^{-\frac{n}{2q}} |\lambda_n| \left(|\eta|u^2 + |\nu_n| v^2 +\mathcal O(\Delta^2)\mathcal O(t^{-\frac{1}{2q}})\right)    \text{if} \quad n>1,
\end{align*}
and $\mathcal  D_{M,2}(u,v,t) = \mathcal O(\Delta)\mathcal O(t^{-\frac{n+3}{2q}})$ as $\Delta\to 0$ and $t\to\infty$, where the positive parameters $A_1$ and $B_1$ are defined by \eqref{A1B1}. It follows that there exist $\Delta_2\leq \Delta_1$ and $t_2\geq t_1$ such that 
\begin{gather*}
\mathcal  D_{M,1}(u,v,t)\leq - t^{-\frac{n}{2q}} \gamma_n \Delta^2, \quad 
\mathcal  D_{M,2}(u,v,t)\leq t^{-\frac{n+1}{2q}} C \Delta
\end{gather*}
for all $(u,v,t)\in\mathbb R^3$ such that $\Delta\leq \Delta_2$ and $t\geq t_2$, where $C={\hbox{\rm const}}>0$, $\gamma_1=\min\{A_1,B_1\}/2$ and $\gamma_n=|\lambda_n|\min\{|\eta|,|\nu_n|\}/2$. Therefore, for all $\epsilon\in (0,\Delta_2)$ there exist
\begin{gather*}
\delta_\epsilon=\frac{2C }{\gamma_n} t_\epsilon^{-\frac{1}{2q}}, \quad t_\epsilon=\max\left\{t_2,\left(\frac{4 C }{\gamma_n \epsilon}\right)^{2q}\right\}
\end{gather*}
such that 
\begin{gather*}
\frac{d\mathcal L_M}{dt}\Big|_\eqref{trsys}\leq 	t^{-\frac{n}{2q}}\left(-\gamma_n+C\delta_\epsilon^{-1}t_\epsilon^{-\frac{1}{2q}}\right)\Delta^2 \leq 0
\end{gather*}
for all $(u,v,t)\in\mathbb R^3$ such that $\delta_\epsilon\leq \Delta\leq \epsilon$ and $t\geq t_\epsilon$. Combining this with \eqref{Lnineq1}, we see that any solution of system \eqref{uvsys} with initial data $\sqrt{u^2(t_\epsilon)+v^2(t_\epsilon)}\leq \delta$, where $\delta=\max\{\delta_\epsilon,\epsilon\sqrt{L_-/L_+}\}$, cannot exit from the domain $\{(u,v)\in\mathbb R^2: \Delta\leq \epsilon\}$ as $t\geq t_\epsilon$. It follows from \eqref{subsM} that for all $M\in\mathbb Z_+$ the trajectories of system \eqref{trsys} starting close to $(0,\psi_0)$ satisfy the estimates $\varrho(t)=\varrho_{\ast,M}(t)+\mathcal O(t^{-\frac{M}{2q}})$, $\phi(t)=\phi_{\ast,M}(t)+\mathcal O(t^{-\frac{M}{2q}})$ as $t\to\infty$. Thus, there exists the solution $\varrho_\ast(t)$, $\phi_\ast(t)$ of system \eqref{trsys} with asymptotics \eqref{assol}. 

Now let $n\geq 2$. Using 
\begin{gather*}
\mathcal L_M(u,v,t)\equiv 
\begin{cases}
({\hbox{\rm sgn}}\, \eta)t^{\frac{n}{2q}}\mathcal H_M(u,v,t)+ t^{-\frac{n-1}{2q}} (\lambda_n+\omega_n)\mathcal K(u,v), & n=m,\\
({\hbox{\rm sgn}}\, \eta) t^{\frac{n}{2q}}\mathcal H_M(u,v,t) + t^{-\frac{m-1}{2q}} \omega_m \mathcal K(u,v), & n>m,
\end{cases}
\end{gather*}
as a Lyapunov function candidate for system \eqref{uvsys}, we obtain \eqref{Lnas} and \eqref{dLn}, where
\begin{align*}
&\mathcal  D_{M,1}(u,v,t)=
\begin{cases}
-t^{-\frac{n}{2q}}|\lambda_n+\omega_n|\left(|\eta|u^2 + |\nu_n| v^2 +\mathcal O(\Delta^2)\mathcal O(t^{-\frac{1}{2q}})\right), & n=m,\\
-t^{-\frac{m}{2q}}|\omega_m|\left(|\eta|u^2 + |\nu_n| v^2 +\mathcal O(\Delta^2)\mathcal O(t^{-\frac{1}{2q}})\right), & n>m,
\end{cases}\\
& \mathcal  D_{M,2}(u,v,t) = \begin{cases}
\mathcal O(\Delta) \mathcal O(t^{-\frac{n+1}{2q}}), & n=m,\\
\mathcal O(\Delta)\mathcal O(t^{-\frac{m+1}{2q}}), & n>m,
\end{cases}
\end{align*}
as $\Delta\to 0$ and $t\to\infty$. Then, repeating the arguments as given above proves the existence of the solution with asymptotics \eqref{assol}.
 
To prove the stability of the constructed solution consider the substitution \eqref{subsM} with $\varrho_\ast(t)$, $\phi_\ast(t)$ instead of $\varrho_{\ast,M}(t)$, $\phi_{\ast,M}(t)$ and with some $M\in\mathbb Z_+$. In this case, we obtain system \eqref{uvsys} with 
\begin{align*}
& \mathcal  F_M(u,v,t)\equiv t^{\frac{M}{2q}}\left(\hat\Lambda_N(\varrho_{\ast}(t)+t^{-\frac{M}{2q}} u,\phi_{\ast}(t)+t^{-\frac{M}{2q}} v,t)-\hat\Lambda_N(\varrho_{\ast}(t),\phi_{\ast}(t),t)\right) + \frac{M}{2q} t^{-1} u, \\
& \mathcal  G_M(u,v,t)\equiv t^{\frac{M}{2q}}\left(\hat\Omega_N(\varrho_{\ast}(t)+t^{-\frac{M}{2q}} u,\phi_{\ast}(t)+t^{-\frac{M}{2q}} v,t)-\hat\Omega_N(\varrho_{\ast}(t),\phi_{\ast}(t),t)\right) + \frac{M}{2q} t^{-1} v,
\end{align*} 
and $ \xi_M(t)\equiv  \zeta_M(t)\equiv 0$. Then, repeating the arguments as given 
above and using the constructed Lyapunov functions, we get $d\mathcal L_M/dt\leq - t^{-\frac{2n-1}{2q}}D_n \mathcal L_M$ for all $(u,v,t)\in\mathbb R^3$ such that $\Delta\leq \Delta_3$, $t\geq t_3$ with some $\Delta_3\leq \Delta_1$, $t_3\geq t_1$ and $D_n=\gamma_n/L_+>0$. Integrating this inequality and taking into account \eqref{Lnineq1}, we obtain asymptotic stability of the solution $\varrho_\ast(t)$, $\phi_\ast(t)$ if $n\leq q$ and (non-asymptotic) stability if $n>q$.
\end{proof}

\begin{proof}[Proof of Lemma~\ref{Lem2}]
The asymptotic series are constructed in the same way as in the proof of Lemma~\ref{Lem1}.
Consider the functions $\varrho_{\ast,M}(t)$, $\phi_{\ast,M}(t)$ defined by \eqref{sumM}. Substituting 
\begin{gather}\label{subsM2}
\varrho(t)=\varrho_{\ast,M}(t)+t^{-\frac{M-h}{2q}} u(t), \quad 
\phi(t)=\phi_{\ast,M}(t)+t^{-\frac{M-h}{2q}}v(t)
\end{gather}
with $M>h$ into equations \eqref{trsys}, we get perturbed near-Hamiltonian system \eqref{uvsys}, where $\mathcal H_M(u,v,t)$ and $\Upsilon_M(u,v,t)$ are defined by \eqref{HU1} with
\begin{gather*}
	\mathcal  F_M  \equiv  t^{\frac{M-h}{2q}}\left(\hat\Lambda_N(\varrho_{\ast,M}(t)+t^{-\frac{M-h}{2q}} u,\phi_{\ast,M}(t)+t^{-\frac{M-h}{2q}} v,t)-\hat\Lambda_N(\varrho_{\ast,M}(t),\phi_{\ast,M}(t),t)\right) + \frac{M-h}{2q} t^{-1} u, \\
	\mathcal  G_M \equiv  t^{\frac{M-h}{2q}}\left(\hat\Omega_N(\varrho_{\ast,M}(t)+t^{-\frac{M-h}{2q}} u,\phi_{\ast,M}(t)+t^{-\frac{M-h}{2q}} v,t)-\hat\Omega_N(\varrho_{\ast,M}(t),\phi_{\ast,M}(t),t)\right) + \frac{M-h}{2q} t^{-1} v,
\end{gather*}
and the perturbations have the following form:
\begin{gather*}
	\xi_M(t) \equiv - t^{\frac{M-h}{2q}}Z_\varrho(t), \quad
	\zeta_M(t) \equiv - t^{\frac{M-h}{2q}}Z_\phi(t)
\end{gather*}
with functions $Z_\varrho(t)$ and $Z_\phi(t)$ defined by \eqref{Zeq}.
It follows easily that 
\begin{align*}
&\mathcal H_M(u,v,t)= \left\{t^{-\frac{1}{2q}}\frac{\eta u^2}{2}-t^{-\frac{n}{2q}}\left(\lambda_n u v+ \frac{\nu_n v^2}{2}\right)\right\}(1+\mathcal O(t^{-\frac{1}{2q}})),\\
&\Upsilon_M(u,v,t)=t^{-\frac{h}{2q}} d_h v(1+\mathcal O(t^{-\frac{1}{2q}})), \\ 
&\xi_M(t)=\mathcal O(t^{-\frac{h+2n+2}{2q}}), \quad 
\zeta_M(t)=\mathcal O(t^{-\frac{h+n+m+1}{2q}})
\end{align*}
as $\Delta=\sqrt{u^2+v^2}\to 0$ and $t\to\infty$.  

Note that $\partial_u \mathcal H_M(0,0,t)\equiv \partial_v \mathcal H_M(0,0,t) \equiv \Upsilon_M(0,0,t)\equiv 0$.  Let us prove the stability of the system with respect to the non-vanishing perturbations $\xi_M(t)$ and $\zeta_M(t)$ (see~\cite[Ch.~9]{Halil}). 

Consider a Lyapunov function candidate in the following form:
\begin{gather*}
\mathcal L_M(u,v,t)\equiv 
\begin{cases}
({\hbox{\rm sgn}}\, \eta)t^{\frac{1}{2q}} \mathcal H_M(u,v,t)+t^{-\frac{h-1}{2q}}\frac{d_h({\hbox{\rm sgn}}\, \eta)}{2}u v , & h>n=1,\\
({\hbox{\rm sgn}}\, \eta)t^{\frac{n}{2q}} \mathcal H_M(u,v,t)+t^{-\frac{h-1}{2q}} d_h ({\hbox{\rm sgn}}\, \eta)u v, & h>n>1,\\
({\hbox{\rm sgn}}\, \eta)t^{\frac{n}{2q}} \mathcal H_M(u,v,t) +t^{-\frac{h-1}{2q}} d_h\left\{({\hbox{\rm sgn}}\, \eta)u v+\frac{\lambda_n v^2}{2|\eta|}\right \}  , & h\leq n.
\end{cases}
\end{gather*}
We see that there exist $\Delta_1>0$ and $t_1\geq t_0$ such that the estimate \eqref{Lnineq1} holds for all $(u,v,t)\in\mathbb R^3$ such that $\Delta\leq \Delta_1$ and $t\geq t_1$ with some $L_\pm={\hbox{\rm const}}>0$. 
The derivative of $\mathcal L_M(u,v,t)$ with respect to $t$ along the trajectories of the system is given by \eqref{dLn}, where $\mathcal  D_{M,1}\equiv \left(\partial_t - \partial_v \mathcal H_M \partial_u +(\partial_u \mathcal H_M+\Upsilon_M)  \partial_v \right)\mathcal L_M$ and $\mathcal  D_{M,2}\equiv \left(\xi_M\partial_u +\zeta_M\partial_v \right) \mathcal L_M$. 
We see that 
\begin{gather*}
	\mathcal  D_{M,1}=
		\begin{cases}
			t^{-\frac{h}{2q}}\frac{d_n}{2}(|\eta|u^2+|\nu_1|v^2)(1 +\mathcal O(t^{-\frac{1}{q}})), & n=1, \\
			t^{-\frac{h}{2q}}d_n(|\eta|u^2+|\nu_1|v^2)(1 +\mathcal O(t^{-\frac{1}{q}})), & n\neq 1
		\end{cases}
\end{gather*}
and $\mathcal  D_{M,2}(u,v,t) = \mathcal O(t^{-\frac{h+n+3}{2q}})\mathcal O(\Delta)$ as $\Delta\to 0$ and $t\to\infty$. It follows that there exist  $\Delta_2\leq \Delta_1$ and $t_2\geq t_1$ such that 
$\mathcal  D_{M,1}(u,v,t)\leq - t^{-\frac{h}{2q}} \gamma_h \Delta^2$ and
$\mathcal  D_{M,2}(u,v,t)\leq t^{-\frac{h+1}{2q}} C \Delta$
for all $(u,v,t)\in\mathbb R^3$ such that $\Delta\leq \Delta_2$ and  $t\geq t_2$, where $C={\hbox{\rm const}}>0$,  
and $\gamma_h=|d_h|/4>0$. Hence, for all $\epsilon\in (0,\Delta_2)$ there exist
\begin{gather*}
\delta_\epsilon=\frac{2C }{\gamma_h} t_\epsilon^{-\frac{1}{2q}}, \quad t_\epsilon=\max\left\{t_2,\left(\frac{4 C }{\gamma_h\epsilon}\right)^{2q}\right\}
\end{gather*}
such that 
\begin{gather*}
\frac{d\mathcal L_M}{dt}\Big|_\eqref{uvsys}\leq t^{-\frac{h}{2q}}\left(-\gamma_h+C\delta_\epsilon^{-1}t_\epsilon^{-\frac{1}{2q}}\right)\Delta^2 \leq 0
\end{gather*}
for all $\delta_\epsilon\leq \Delta\leq \epsilon$ and $t\geq t_\epsilon$. Taking into account \eqref{Lnineq1}, we see that solutions of system \eqref{uvsys} with initial data $\sqrt{u^2(t_\epsilon)+v^2(t_\epsilon)}\leq \delta$ and $\delta=\max\{\delta_\epsilon,\epsilon\sqrt{L_-/L_+}\}$  cannot exit from the domain $\{(u,v)\in\mathbb R^2: \Delta\leq \epsilon\}$ as $t\geq t_\epsilon$.  Thus, for all $M>h$ the solutions of system \eqref{trsys} starting close to $(0,\psi_0)$ satisfy the estimates $\varrho(t)=\varrho_{\ast,M}(t)+\mathcal O(t^{-\frac{M-h}{2q}})$, $\phi(t)=\phi_{\ast,M}(t)+\mathcal O(t^{-\frac{M-h}{2q}})$ as $t\to\infty$. This ensures the existence of a particular solution $\varrho_\ast(t)$, $\phi_\ast(t)$ of system \eqref{trsys} with asymptotic expansion \eqref{assol}. 

To prove the stability of the constructed solution consider the substitution \eqref{subsM2} with $\varrho_\ast(t)$, $\phi_\ast(t)$ instead of $\varrho_{\ast,M}(t)$, $\phi_{\ast,M}(t)$ and some integer $M>h$. In this case, we obtain system \eqref{uvsys} with 
\begin{align*}
& \mathcal  F_M(u,v,t)\equiv t^{\frac{M-h}{2q}}\left(\hat\Lambda_N(\varrho_{\ast}(t)+t^{-\frac{M-h}{2q}} u,\phi_{\ast}(t)+t^{-\frac{M-h}{2q}} v,t)-\hat\Lambda_N(\varrho_{\ast}(t),\phi_{\ast}(t),t)\right) + \frac{M-h}{2q} t^{-1} u, \\
& \mathcal  G_M(u,v,t)\equiv t^{\frac{M-h}{2q}}\left(\hat\Omega_N(\varrho_{\ast}(t)+t^{-\frac{M-h}{2q}} u,\phi_{\ast}(t)+t^{-\frac{M-h}{2q}} v,t)-\hat\Omega_N(\varrho_{\ast}(t),\phi_{\ast}(t),t)\right) + \frac{M-h}{2q} t^{-1} v,
\end{align*} 
and $ \xi_M(t)\equiv  \zeta_M(t)\equiv 0$. Then, repeating the arguments as given 
above and using the constructed Lyapunov functions $\mathcal L_M(u,v,t)$, we obtain the inequality 
$d\mathcal L_M/dt\leq - t^{-\frac{h+n-1}{2q}}D_h \mathcal L_M$ for all $(u,v,t)\in\mathbb R^3$ such that $\Delta\leq \Delta_3$, $t\geq t_3$ with some $\Delta_3\leq \Delta_1$, $t_3\geq t_1$ and $D_h=\gamma_h/L_+>0$. Integrating the inequality with respect to $t$ and taking into account \eqref{Lnineq1}, we obtain the asymptotic stability of the solution $\varrho_\ast(t)$, $\phi_\ast(t)$ if $h+n\leq 2q+1$, and the (non-asymptotic) stability if $h+n>2q+1$.
\end{proof}

\section{Analysis of the full system}\label{sec5}
\begin{proof}[Proof of Theorem~\ref{Th2}]
Substituting $\varrho(t)=\varrho_{\ast}(t)+u(t)$, $\phi(t)=\phi_{\ast}(t)+  v(t)$ into \eqref{rhopsi}, we obtain a perturbed near-Hamiltonian system
\begin{gather}\label{uvsys1}
\frac{du}{dt}=-\partial_v \mathcal  H(u,v,t) + \mathcal   P_N(u,v,t), \quad 
\frac{dv}{dt}=\partial_u \mathcal  H(u,v,t) + \Upsilon(u,v,t)+ \mathcal   Q_N(u,v,t), 
\end{gather}
with the Hamiltonian
\begin{gather*}
\mathcal H(u,v,t)   \equiv \int\limits_0^u \mathcal G(w,0,t)\,dw - \int\limits_0^v \mathcal F(u,w,t)\,dw,
\end{gather*}
and perturbations
\begin{align*} 
\Upsilon(u,v,t)  &\equiv  \int\limits_0^v \left(\partial_u\mathcal F(u,w,t)+\partial_v\mathcal G(u,w,t)\right)\,dw,\\
\mathcal  F(u,v,t)& \equiv \hat\Lambda_N(\varrho_{\ast}(t)+u,\phi_{\ast}(t)+v,t)-\hat\Lambda_N(\varrho_{\ast}(t),\phi_{\ast}(t),t),\\
\mathcal  G(u,v,t)& \equiv \hat\Omega_N(\varrho_{\ast}(t)+u,\phi_{\ast}(t)+v,t)-\hat\Omega_N(\varrho_{\ast}(t),\phi_{\ast}(t),t),\\
\mathcal  P_N(u,v,t)&\equiv  \tilde \Lambda_N(\varrho_{\ast}(t)+u,\phi_{\ast}(t)+v,S(t),t), \\
\mathcal  Q_N(u,v,t)&\equiv \tilde \Omega_N(\varrho_{\ast}(t)+u,\phi_{\ast}(t)+v,S(t),t).
\end{align*}
It follows from \eqref{tildeLO}, \eqref{aszero} and \eqref{assol} that
\begin{gather}\label{HUPQest}
\begin{split}
 \mathcal  H(u,v,t)&=\left\{t^{-\frac{1}{2q}}\frac{\eta u^2}{2}-t^{-\frac{n}{2q}}\left(\lambda_n u v+ \frac{\nu_n v^2}{2}\right)\right\}\left(1+\mathcal O(\Delta)+\mathcal O(t^{-\frac{1}{2q}})\right),\\
 \Upsilon(u,v,t)&=v\left(\lambda_n t^{-\frac{n}{2q}}+\omega_m t^{-\frac{m}{2q}}\right) \left(1+\mathcal O(\Delta)+\mathcal O(t^{-\frac{1}{2q}})\right),\\
 \mathcal P_N(u,v,t)&=\mathcal O(t^{-\frac{N+1}{2q}}), \\
 \mathcal Q_N(u,v,t)&=\mathcal O(t^{-\frac{N+1}{2q}})
\end{split}
\end{gather}
as $\Delta=\sqrt{u^2+v^2}\to 0$ and $t\to\infty$. Note that $\partial_u\mathcal  H(0,0,t)\equiv\partial_v\mathcal  H(0,0,t)\equiv \Upsilon(0,0,t)\equiv 0$, while the functions $\mathcal P_N(u,v,t)$ and $\mathcal Q_N(u,v,t)$ do not preserve the equilibrium $(0,0)$ and can be considered as external perturbations. Let us prove the stability of the equilibrium in the perturbed system~\cite[Ch.~9]{Halil}.

Consider a Lyapunov function candidate in the form
\begin{gather}\label{LFTh}
\mathcal L(u,v,t)\equiv 
\begin{cases}
({\hbox{\rm sgn}}\, \eta)t^{\frac{1}{2q}} \mathcal H(u,v,t)+(\chi_1+\lambda_1  {\hbox{\rm sgn}}\, \eta ) u v , & n=1,\\
({\hbox{\rm sgn}}\, \eta)t^{\frac{n}{2q}} \mathcal H(u,v,t)  + t^{-\frac{n-1}{2q}}\lambda_n \mathcal K(u,v), & 1<n<m,\\
({\hbox{\rm sgn}}\, \eta)t^{\frac{n}{2q}} \mathcal H(u,v,t)  + t^{-\frac{n-1}{2q}}(\lambda_n+\omega_n)\mathcal K(u,v), & 1<n=m,\\
({\hbox{\rm sgn}}\, \eta)t^{\frac{n}{2q}} \mathcal H(u,v,t)  + t^{-\frac{m-1}{2q}}\omega_m \mathcal K(u,v), & n>m\geq 2,
\end{cases}
\end{gather}
with $\mathcal K(u,v)\equiv {\lambda_n v^2}/{|2\eta|}+ ({\hbox{\rm sgn}}\, \eta) u v$ and the parameter $\chi_1$ defined by \eqref{chi1}. Note that if $n>1$,
\begin{gather*}
\mathcal L(u,v,t) = t^{\frac{n-1}{2q}}\frac{|\eta|u^2}{2}+ \frac{|\nu_n|v^2}{2}-({\hbox{\rm sgn}}\, \eta) \lambda_n uv +\mathcal O(\Delta^3)+\mathcal O(\Delta^2)\mathcal O(t^{-\frac{1}{2q}})
\end{gather*}
as $\Delta\to 0$ and $t\to\infty$. It follows that there exist $\Delta_1>0$ and $t_1\geq t_0$ such that  $\mathcal L(u,v,t)$ satisfies the inequalities \eqref{Lnineq1} for all $(u,v,t)\in\mathbb R^3$ such that $\Delta\leq \Delta_1$ and $t\geq t_1$ with some $L_\pm={\hbox{\rm const}}>0$. 
The derivative of $\mathcal L(u,v,t)$ with respect to $t$ along the trajectories of system \eqref{uvsys1} is given by 
\begin{gather} \label{dLuv1}
\frac{d\mathcal L}{dt}\Big|_\eqref{uvsys1}\equiv \mathcal  D_{1}(u,v,t)+\mathcal  D_{2,N}(u,v,t),
\end{gather}
where $\mathcal  D_{1}\equiv \left(\partial_t -\partial_u \mathcal  H \partial_u +(\partial_v\mathcal  H+\Upsilon) \partial_v \right)\mathcal L$ and $\mathcal  D_{2,N}\equiv \left(\mathcal   P_N\partial_u +\mathcal   Q_N\partial_v \right)\mathcal L$. We see that 
\begin{gather*}
\mathcal  D_1=
\begin{cases}
-t^{-\frac{1}{2q}}\left( (|\lambda_1|-|\chi_1|) |\eta|u^2+|\chi_1 \nu_1|v^2+\chi_1 |\lambda_1|  uv +\mathcal O(\Delta^3)\right)+\mathcal O(\Delta^2)\mathcal O(t^{-\frac{1}{q}}), & n=1, \\
- t^{-\frac{n}{2q}} |\lambda_n|\left(|\eta|u^2 + |\nu_n| v^2 +\mathcal O(\Delta^3)\right)+\mathcal O(\Delta^2)\mathcal O(t^{-\frac{n+1}{2q}}), & 1<n<m, \\
- t^{-\frac{n}{2q}} |\lambda_n+\omega_n|\left(|\eta|u^2 + |\nu_n| v^2 +\mathcal O(\Delta^3)\right)+\mathcal O(\Delta^2)\mathcal O(t^{-\frac{n+1}{2q}}), & n=m, \\
- t^{-\frac{m}{2q}} |\omega_m|\left(|\eta|u^2 + |\nu_n| v^2 +\mathcal O(\Delta^3)\right)+\mathcal O(\Delta^2)\mathcal O(t^{-\frac{m+1}{2q}}), & n>m,
\end{cases}
\end{gather*}
and $\mathcal  D_{2,N}(u,v,t) = \mathcal O(\Delta)\mathcal O(t^{-\frac{N-n+2}{2q}})$ as $\Delta\to 0$ and $t\to\infty$. It follows that there exist $N_0=\min\{2n-1,n+m-1\}$, $\Delta_2\leq \Delta_1$ and $t_2\geq t_1$ such that 
\begin{gather*}
\mathcal  D_{1}(u,v,t)\leq - t^{-\frac{\ell}{2q}} \gamma \Delta^2, \quad 
\mathcal  D_{2,N}(u,v,t)\leq t^{-\frac{\ell+1}{2q}} C \Delta
\end{gather*}
for all $N\geq N_0$ and $(u,v,t)\in\mathbb R^3$ such that $\Delta\leq \Delta_2$ and $t\geq t_2$, where $C={\hbox{\rm const}}>0$ and $\ell=\min\{n,m\}$. If $n=1$, then $\gamma=\min\{A_1,B_1\}/2$, and if $n>1$, then $\gamma=|d_{n,m}|\min\{|\eta|,|\nu_n|\}/2$. Positive parameters $A_1$ and $B_1$ are defined by \eqref{A1B1}. Hence, for all $\epsilon\in (0,\Delta_2)$ there exist
\begin{gather*}
\delta_\epsilon=\frac{2C }{\gamma} t_\epsilon^{-\frac{1}{2q}}, \quad t_\epsilon=\max\left\{t_2,\left(\frac{4 C }{\gamma\epsilon}\right)^{2q}\right\}
\end{gather*}
such that 
\begin{gather*}
\frac{d\mathcal L}{dt}\Big|_\eqref{uvsys1}\leq t^{-\frac{\ell}{2q}}\left(-\gamma+C\delta_\epsilon^{-1}t_\epsilon^{-\frac{1}{2q}}\right)\Delta^2 \leq 0
\end{gather*}
for all $(u,v,t)\in\mathbb R^3$ such that $\delta_\epsilon\leq \Delta\leq \epsilon$ and $t\geq t_\epsilon$. Taking into account \eqref{Lnineq1}, we see that any solution of system \eqref{uvsys1} with initial data $\sqrt{u^2(t_\epsilon)+v^2(t_\epsilon)}\leq \delta$ and $\delta=\max\{\delta_\epsilon,\epsilon\sqrt{L_-/L_+}\}$  cannot exit from the domain $\{(u,v)\in\mathbb R^2: \Delta\leq \epsilon\}$ as $t\geq t_\epsilon$. 

Thus, returning to the original variables and taking into account Theorem~\ref{Th1} complete the proof.
\end{proof}

\begin{proof}[Proof of Theorem~\ref{Th21}]
Substituting $\varrho(t)=\varrho_{\ast,M}(t)+u(t)$, $\phi(t)=\phi_{\ast,M}(t)+v(t)$ into \eqref{trsys}, we obtain
\begin{gather}\label{uvsys21}
\frac{du}{dt}=-\partial_v H_M(u,v,t) + P_{M,N}(u,v,t), \quad 
\frac{dv}{dt}=\partial_u H_M(u,v,t) + Y_M(u,v,t) + Q_{M,N}(u,v,t), 
\end{gather}
with  
\begin{align*}
H_M(u,v,t) &\equiv \int\limits_0^u \mathcal B_M(w,0,t)\,dw - \int\limits_0^v \mathcal A_M(u,w,t)\,dw,\\
 Y_M(u,v,t)  &\equiv  \int\limits_0^v \left(\partial_u\mathcal A_M(u,w,t)+\partial_v\mathcal B_M(u,w,t)\right)\,dw,\\
\mathcal  A_M(u,v,t) & \equiv \hat\Lambda_N(\varrho_{\ast,M}(t)+u,\phi_{\ast,M}(t)+v,t)-\hat\Lambda_N(\varrho_{\ast,M}(t),\phi_{\ast,M}(t),t),\\
\mathcal  B_M(u,v,t) & \equiv \hat\Omega_N(\varrho_{\ast,M}(t)+ u,\phi_{\ast,M}(t)+v,t)-\hat\Omega_N(\varrho_{\ast,M}(t),\phi_{\ast,M}(t),t),\\
 P_{M,N}(u,v,t) &\equiv  \tilde \Lambda_N(\varrho_{\ast,M}(t)+u,\phi_{\ast,M}(t)+v,S(t),t) - Z_\varrho(t),\\ 
 Q_{M,N}(u,v,t) &\equiv  \tilde \Omega_N(\varrho_{\ast,M}(t)+u,\phi_{\ast,M}(t)+v,S(t),t) - Z_\phi(t),
\end{align*}
where the functions $Z_\varrho(t)$ and $Z_\phi(t)$ are defined by \eqref{Zeq}.
It can easily be checked that 
\begin{align*}
 H_M(u,v,t)&=\left\{t^{-\frac{1}{2q}}\frac{\eta u^2}{2}-t^{-\frac{n}{2q}}\left(\lambda_n u v+ \frac{\nu_n v^2}{2}\right)\right\}\left(1+\mathcal O(\Delta)+\mathcal O(t^{-\frac{1}{2q}})\right),\\
 Y_M(u,v,t)&=v\left(\lambda_n t^{-\frac{n}{2q}}+\omega_m t^{-\frac{m}{2q}}\right) \left(1+\mathcal O(\Delta)+\mathcal O(t^{-\frac{1}{2q}})\right),\\
 P_{M,N}(u,v,t)&=\mathcal O(t^{-\frac{N+1}{2q}})+\mathcal O(t^{-\frac{2n+M+2}{2q}}), \\
 Q_{M,N}(u,v,t)&=\mathcal O(t^{-\frac{N+1}{2q}})+\mathcal O(t^{-\frac{n+m+M+1}{2q}}), \quad \Delta=\sqrt{u^2+v^2}\to 0, \quad t\to\infty.
\end{align*}

Note that if $P_{M,N}(u,v,t)\equiv Q_{M,N}(u,v,t)\equiv 0$, then system \eqref{uvsys21} has the equilibrium $(0,0)$. The functions $P_{M,N}(u,v,t)$ and $Q_{M,N}(u,v,t)$ do not vanish at the equilibrium and play the role of external perturbations in the system. Let us prove the stability of the perturbed system \eqref{uvsys21} by the Lyapunov function method.

Consider the Lyapunov function $\mathcal L(u,v,t)$ in the form \eqref{LFTh}, with $H_M(u,v,t)$ instead of $\mathcal H(u,v,t)$. 
Note that $\mathcal L(u,v,t)$ satisfies \eqref{Lnineq1} for all $(u,v,t)\in\mathbb R^3$ such that $\Delta\leq \Delta_1$ and $t\geq t_1$ with some $L_\pm={\hbox{\rm const}}>0$, $\Delta_1>0$ and $t_1\geq t_0$. The derivative of $\mathcal L(u,v,t)$ with respect to $t$ along the trajectories of system is given by
\begin{gather*} 
\frac{d\mathcal L}{dt}\Big|_\eqref{uvsys21}\equiv \mathcal  D_{1,M}(u,v,t)+\mathcal  D_{2,M,N}(u,v,t),
\end{gather*}
where $\mathcal  D_{1,M}\equiv \left(\partial_t  -\partial_v H_M  \partial_u +(\partial_u H_M+Y_M)\partial_v \right)\mathcal L$ and $\mathcal  D_{2,M,N}\equiv \left(P_{M,N}\partial_u +Q_{M,N} \partial_v \right)\mathcal L$. Note that the following estimates hold:
\begin{align*}
& \mathcal D_{1,M}(u,v,t)\geq t^{-\frac{1}{2q}} (A_1 u^2 + B_1 v^2 +\mathcal O(\Delta^3)+\mathcal O(t^{-\frac{1}{2q}})\mathcal O(\Delta^2)) \quad \text{if} \quad n=1,\\
&\mathcal  D_{1,M}(u,v,t)= |d_{n,m}|  t^{-\frac{\ell}{2q}} \left(|\eta|u^2 + |\nu_n| v^2 +\mathcal O(\Delta^3)+\mathcal O(\Delta^2)\mathcal O(t^{-\frac{1}{2q}})\right)\quad \text{if} \quad n>1, \\
& \mathcal  D_{2,M,N}(u,v,t) =\mathcal O(\Delta)  \mathcal O(t^{-\frac{n+M+1}{2q}})+\mathcal O(\Delta) \mathcal O(t^{-\frac{N-n+2}{2q}})
\end{align*}
as $\Delta\to 0$ and $t\to\infty$, where $\ell=\min\{m,n\}$ and positive parameters $A_1$, $B_1$ are defined by \eqref{A1B1}. 
Hence, there exist $N_0=\min\{2n-1,n+m-1\}$, $C>0$, $t_2\geq t_1$ and $\epsilon\in (0,\Delta_1)$ such that
\begin{gather*}
\frac{d\mathcal L}{dt}\Big|_\eqref{uvsys21}\geq t^{-\frac{\ell}{2q}}\left(\gamma \Delta^2\ -C t^{-\frac{1}{2q}}\Delta\right) 
\end{gather*}
for all $N\geq N_0$ and $(u,v,t)\in\mathbb R^3$ such that $\Delta\leq \epsilon$ and $t\geq t_2$, where $\gamma=\min\{A_1,B_1\}/2$ if $n=1$, and $\gamma=|d_{n,m}|\min\{|\eta|,|\nu_n|\}/2$ if $n>1$. Hence, for all $\delta\in (0,\epsilon)$ there is $t_\ast=\max\{t_2,(2C/|\delta d_{n,m}|)^{2q}\}$ such that 
\begin{gather*}
\frac{d\mathcal L}{dt}\Big|_\eqref{uvsys21}\geq  t^{-\frac{\ell}{2q}} \frac{\gamma}{2}\Delta^2\geq  t^{-\frac{\ell+n-1}{2q}}\tilde \gamma\mathcal L
\end{gather*} 
for all $(u,v,t)\in\mathbb R^3$ such that $\delta\leq \Delta\leq \epsilon$ and $t\geq t_\ast$ with $\tilde \gamma=\gamma/(2L_+)$. Recall that $\ell+n-1<2q$. Then, integrating the last inequality and taking $u(t_\ast)$, $v(t_\ast)$ such that $\sqrt{u^2(t_\ast)+v^2(t_\ast)}=\delta$, we obtain
\begin{gather*}
 u^2(t)+ v^2(t)\geq \frac{\delta^2 L_-}{L_+} t^{-\frac{n-1}{2q}} \exp \left\{ \frac{2q \tilde \gamma}{2q-\ell-n+1}\left(t^{1-\frac{\ell+n-1}{2q}}-t_\ast^{1-\frac{\ell+n-1}{2q}}\right)\right\}, \quad t\geq t_\ast.
\end{gather*}
Hence, there exists $t_e>t_\ast$ such that $ u^2(t_e)+ v^2(t_e)\geq \epsilon^2$. Returning to the  variables  $r(t)$, $\varphi(t)$, we obtain the result of the Theorem.
\end{proof}

\begin{proof}[Proof of Theorem~\ref{Th23}]
Substituting $\varrho(t)=\varrho_{\ast}(t)+u(t)$, $\phi(t)=\phi_{\ast}(t)+  v(t)$ into \eqref{rhopsi}, we obtain system \eqref{uvsys1}. It follows from \eqref{tildeLO}, \eqref{assol} and \eqref{asst} that the functions $\mathcal H(u,v,t)$, $\mathcal P_N(u,v,t)$ and $\mathcal Q_N(u,v,t)$ satisfy \eqref{HUPQest}, while the function $\Upsilon(u,v,t)$ satisfies the following estimate:
\begin{gather*}
\Upsilon(u,v,t)=t^{-\frac{h}{2q}}d_h v \left(1+\mathcal O(\Delta)+\mathcal  O(t^{-\frac{1}{2q}})\right), \quad \Delta=\sqrt{u^2+v^2}\to 0, \quad t\to\infty.
\end{gather*}

Consider a Lyapunov function candidate in the form
\begin{gather}\label{LFTh2}
\mathcal L(u,v,t)\equiv 
\begin{cases}
({\hbox{\rm sgn}}\, \eta)t^{\frac{1}{2q}} \mathcal H(u,v,t)+t^{-\frac{h-1}{2q}}\frac{d_h({\hbox{\rm sgn}}\, \eta)}{2}u v , & h>n=1,\\
({\hbox{\rm sgn}}\, \eta)t^{\frac{n}{2q}} \mathcal H(u,v,t)+t^{-\frac{h-1}{2q}} d_h ({\hbox{\rm sgn}}\, \eta)u v, & h>n>1,\\
({\hbox{\rm sgn}}\, \eta)t^{\frac{n}{2q}} \mathcal H(u,v,t) +t^{-\frac{h-1}{2q}} d_h\left\{({\hbox{\rm sgn}}\, \eta)u v+\frac{\lambda_n v^2}{2|\eta|}\right \}  , & h\leq n.
\end{cases}
\end{gather}
It can easily be checked that there exist $\Delta_1>0$ and $t_1\geq t_0$ such that $\mathcal L(u,v,t)$ satisfies the inequalities \eqref{Lnineq1}  for all $\Delta\leq \Delta_1$ and $t\geq t_1$ with some $L_\pm={\hbox{\rm const}}>0$. The total derivative of $\mathcal L(u,v,t)$ with respect to $t$ along the trajectories of system \eqref{uvsys1} is given by \eqref{dLuv1}, where
\begin{gather*}
	\mathcal  D_{1}(u,v,t)=
		\begin{cases}
			t^{-\frac{h}{2q}}\frac{d_h}{2}(|\eta|u^2+|\nu_1|v^2)(1 +\mathcal O(\Delta)+\mathcal O(t^{-\frac{1}{q}})), & n=1, \\
			t^{-\frac{h}{2q}}d_h(|\eta|u^2+|\nu_1|v^2)(1+\mathcal O(\Delta) +\mathcal O(t^{-\frac{1}{q}})), & n\neq 1
		\end{cases}
\end{gather*}
and $\mathcal  D_{2,N}(u,v,t) = \mathcal O(t^{-\frac{N-n+2}{2q}})\mathcal O(\Delta)$ as $\Delta\to 0$ and $t\to\infty$. It follows that there exist $N_0=n+h-1$, $\Delta_2\leq \Delta_1$ and $t_2\geq t_1$ such that 
$
\mathcal  D_{1}(u,v,t)\leq - t^{-\frac{h}{2q}} \gamma \Delta^2$, $\mathcal  D_{2}(u,v,t)\leq t^{-\frac{h+1}{2q}} C \Delta$
for all $N\geq N_0$ and $(u,v,t)\in\mathbb R^3$ such that $\Delta\leq \Delta_2$ and $t\geq t_2$, where $C={\hbox{\rm const}}>0$,  
and $\gamma_h=|d_h|/4$. By repeating the steps of the proof of Theorem~\ref{Th2}, we see that for all $\epsilon\in (0,\Delta_2)$ there exist $0<\delta_\epsilon<\epsilon$ and $t_\epsilon\geq t_2$ such that any solution of system \eqref{uvsys1} with initial data $\sqrt{u^2(t_\epsilon)+v^2(t_\epsilon)}\leq \delta$ and $\delta=\max\{\delta_\epsilon,\epsilon\sqrt{L_-/L_+}\}$  cannot exit from the domain $\{(u,v)\in\mathbb R^2: \Delta\leq \epsilon\}$ as $t\geq t_\epsilon$.  Returning to the original variables, we obtain the result of the Theorem.
\end{proof}

\begin{proof}[Proof of Theorem~\ref{Th24}]
Substituting $\varrho(t)=\varrho_{\ast,M}(t)+u(t)$, $\phi(t)=\phi_{\ast,M}(t)+v(t)$ into \eqref{rhopsi}, we obtain system \eqref{uvsys21}. 
In this case
\begin{gather*}
 Y_M(u,v,t)=v\left(\lambda_n t^{-\frac{n}{2q}}+\omega_m t^{-\frac{m}{2q}}\right) \left(1+\mathcal O(\Delta)+\mathcal O(t^{-\frac{1}{2q}})\right), \quad \Delta=\sqrt{u^2+v^2}\to 0, \quad t\to\infty.
\end{gather*}
Then, by repeating the proof of Theorem~\ref{Th21} with the Lyapunov function in the form \eqref{LFTh2}, with $H_M(u,v,t)$ instead of $\mathcal H(u,v,t)$, we obtain the result of the Theorem.
\end{proof}

\begin{proof}[Proof of Theorem~\ref{Th3}]  It follows from the first equation in \eqref{rhopsi} and assumption \eqref{asnzero} that for all $D>0$ there exist $t_1\geq t_0$ and $C_1>0$ such that 
$ |{d\rho}/{dt}|\geq t^{-\frac{n}{2q}}C_1$ for all $|\rho|\leq 4D$, $\psi\in\mathbb R$ and $t\geq t_1$. Integrating this inequality yields $|\rho(t)-\rho(t_1)|\geq C(t)> 0$ as $t> t_1$, where 
\begin{gather*}
C(t)\equiv \begin{cases}
	\frac{2q C_1}{2q-n} \left(t^{1-\frac{n}{2q}}-t_1^{1-\frac{n}{2q}}\right), & n<2q,\\
	C_1\left(\log t-\log t_1\right), & n=2q.
	\end{cases}
\end{gather*}
Hence, for all initial data $|\rho(t_1)|\leq D/2$ and $\psi(t_1)\in\mathbb R$ there exists $t_2\geq t_1$ such that $|\rho(t)|\geq D$ as $t\geq t_2$. Combining this with the second equation in \eqref{rhopsi}, we see that there exist $t_3\geq t_2$ and $C_2>0$ such that 
$|{d\psi}/{dt} |\geq t^{-\frac{1}{2q}}C_2$
for all $D\leq |\rho|\leq 2D$, $\psi\in\mathbb R$ and $t\geq t_3$. Then, by integration, we have
 \begin{gather*}
	|\psi(t)-\psi(t_3)|\geq \frac{2q C_2}{2q-1} \left(t^{1-\frac{1}{2q}}-t_3^{1-\frac{1}{2q}}\right), \quad t\geq t_3.
\end{gather*}
Therefore, for all initial data $D\leq |\rho(t_3)|\leq 3D/2$ and $|\psi(t_3)|\leq D/2$ there exists $t_4>t_3$ such that $|\rho(t)|\geq 2D$ and $|\psi(t)|\geq D$ as $t\geq t_4$.
\end{proof}

\section{Examples}\label{sEx}
In this section, we show how the proposed theory can be applied to examples of oscillatory systems with time-decaying perturbations. In particular, the conditions were obtained for the parameters of perturbations that guarantee the existence of a stable phase-locking regime with a resonant amplitude. The results are illustrated with numerical simulations. The last example analyzes the perturbed Duffing oscillator discussed in Section~\ref{sec1}.

\subsection{Example 1}
Consider the system
\begin{gather}\label{Ex1}
\frac{dr}{dt}=t^{-\frac{1}{2}} f_1(r,\varphi,S(t)), \quad \frac{d\varphi}{dt}=\omega(r)+t^{-\frac{1}{2}}g_1(r,\varphi,S(t))
\end{gather}
where  
\begin{gather*}
f_1(r,\varphi,S)  \equiv \beta(S) r \sin^2\varphi - \mu(S)\sin\varphi,\quad
g_1(r,\varphi,S)  \equiv \beta(S) \sin\varphi \cos\varphi - \frac{\mu(S)\cos\varphi}{r},\\
\omega(r)\equiv 1-\vartheta r^2, \quad 
\beta(S)\equiv \beta_0+\beta_1\sin S, \quad 
\mu(S)\equiv \mu_0+\mu_1 \sin S, \quad 
S(t) \equiv s_0 t + s_1 t^{\frac{1}{2}},
\end{gather*}
with constant parameters $s_k$, $\vartheta>0$, $\beta_k$ and $\mu_k$. We see that system \eqref{Ex1} has the form \eqref{PS} with $q=2$, $\mathcal R=\vartheta^{-1/2}$, $f(r,\varphi,S(t),t)\equiv t^{-1/2}f_1(r,\varphi,S(t))$ and $g(r,\varphi,S(t),t)\equiv t^{-1/2}g_1(r,\varphi,S(t))$. Note also that in the Cartesian coordinates $x=r\cos \varphi$, $y=-r\sin\varphi$ this system takes the form
\begin{align*}
&\frac{dx}{dt}=(1-\vartheta (x^2+y^2)) y, \\ 
&\frac{dy}{dt}=-(1-\vartheta (x^2+y^2))x + t^{-\frac{1}{2}}Z(x,y,S(t)),
\end{align*}
where $ Z(x,y,S)\equiv \mu(S)+\beta(S) y$.

1. Let $s_0=1/2$. Then, there exist $\kappa=\varkappa=1$, $a=(2\vartheta)^{-1/2}$ such that the resonance condition \eqref{rc} holds with $\eta=-\sqrt{2\vartheta}<0$. It can easily be checked that the change of variables described in Theorem~\ref{Th1} with $N=2$ transforms the system to
\begin{gather}\begin{split}\label{Ex1LO}
&\frac{d\rho}{dt}=t^{-\frac{1}{4}}\Lambda_1(\rho,\psi)+t^{-\frac{1}{2}}\Lambda_2(\rho,\psi)+\tilde \Lambda_2(\rho,\psi,S(t),t), \\
&\frac{d\psi}{dt}=t^{-\frac{1}{4}}\Omega_1(\rho,\psi)+t^{-\frac{1}{2}}\Omega_2(\rho,\psi)+\tilde\Omega_2(\rho,\psi,S(t),t),
\end{split}
\end{gather}
where
\begin{align*}
&\Lambda_1(\rho,\psi)\equiv \frac 12 \left(\frac{\beta_0}{\sqrt{2\vartheta}}-\mu_1 \cos\psi\right), &\quad 
&\Lambda_2(\rho,\psi)\equiv \frac{\beta_0 \rho}{2},\\
&\Omega_1(\rho,\psi)\equiv -\sqrt{2\vartheta} \rho, &\quad 
&\Omega_2(\rho,\psi)\equiv 
\frac12 \left(-2 \vartheta \rho^2 - s_1 +  \mu_1 \sqrt{2\vartheta}  \sin\psi\right),
\end{align*}
and $\tilde \Lambda_2(\rho,\psi,S,t)=\mathcal O(t^{-1})$, $\tilde \Omega_2(\rho,\psi,S,t)=\mathcal O(t^{-1})$ as $t\to\infty$ uniformly for all $|\rho|<\infty$, $(\psi,S)\in\mathbb R^2$. It is readily seen that assumption \eqref{asn} holds with $n=1$ and $m=2$.  

If $\mu_1\neq 0$ and $|\beta_0/\mu_1|<\sqrt{2\vartheta}$, then assumption \eqref{aszero} holds with
\begin{gather*}
\psi_0=\pm \theta_0 + {2\pi}k, \quad k\in\mathbb Z, \quad \nu_1=\pm\frac{\mu_1}{2}\sin \theta_0, \quad \theta_0=\arccos \left(\frac{\beta_0}{\sqrt{2\vartheta} \mu_1 }\right).
\end{gather*}
From Lemma~\ref{Lem01} it follows that if $\pm\mu_1<0$, then the equilibria $(0,\pm\theta_0({\hbox{\rm mod}} {2\pi}))$ in the corresponding limiting system are unstable. Hence, the associated regime is not realized in the full system.
Note that $d_{n,m}=\partial_\rho \Lambda_1(0,\psi_0)=0$. However, assumption \eqref{asst} holds with $h=2$ and $d_h=\beta_0$. It follows from Lemma~\ref{Lem2} and Theorem~\ref{Th23} that if $\pm\mu_1>0$ and $-|\mu_1|\sqrt{2\vartheta}<\beta_0<0$, then a stable phase locking regime with $r(t)\approx a$ and $\varphi(t)\approx S(t)\pm\theta_0({\hbox{\rm mod}} {2\pi})$ occurs in system \eqref{Ex1}. From Theorem~\ref{Th24} it follows that if $\pm\mu_1>0$ and $0<\beta_0<|\mu_1|\sqrt{2\vartheta}$, this regime is unstable. 

If $\mu_1\neq 0$, $|\beta_0/\mu_1|>\sqrt{2\vartheta}$ or $\mu_1=0$, $\beta_0\neq 0$, then assumption \eqref{asnzero} holds. It follows from Theorem~\ref{Th3} that, in this case, the asymptotic regime with $r(t)\approx a$ does not occur (see Fig.~\ref{FigEx11}).
\begin{figure}
\centering
{
   \includegraphics[width=0.4\linewidth]{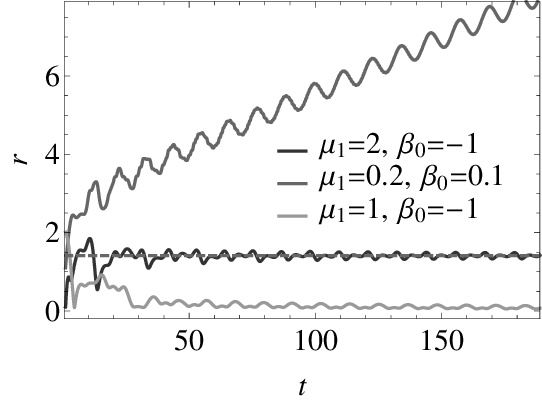}
}
\hspace{1ex}
{
   	\includegraphics[width=0.4\linewidth]{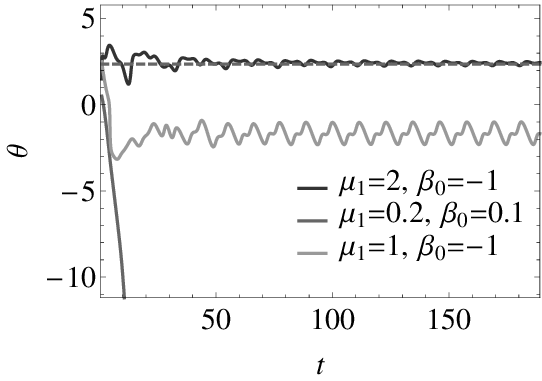}
}
\caption{\small The evolution of $r(t)$ and $\theta(t)\equiv \varphi(t)-S(t)$ for solutions to system \eqref{Ex1} with $s_0=1/2$, $s_1=1$, $\vartheta=1/4$, $\beta_1=1/2$, $\mu_0=-1/2$ with different values of the parameters $\beta_0$ and $\mu_1$. The dashed curves correspond to $r(t)\equiv a$ and $\theta(t)\equiv \theta_0$, where $a=\sqrt 2$ and $\theta_0=3\pi/4$.} \label{FigEx11}
\end{figure}

2. Let $s_0=1$. Then, there are $\kappa=1$, $\varkappa=2$, $a=(2\vartheta)^{-1/2}$ such that condition \eqref{rc} holds with $\eta=-\sqrt{2\vartheta}<0$. In this case, the transformation constructed in Theorem~\ref{Th1} with $N=2$ reduces system \eqref{Ex1} to \eqref{Ex1LO} with
\begin{align*}
&\Lambda_1(\rho,\psi)\equiv \frac{1}{\sqrt{8\vartheta}} \left(\beta_0+\frac{\beta_1}{2}\sin 2\psi\right), &\quad 
&\Lambda_2(\rho,\psi)\equiv  \frac{\rho}{2} \left(\beta_0+\frac{\beta_1}{2}\sin 2\psi\right),\\
&\Omega_1(\rho,\psi)\equiv -\sqrt{2\vartheta} \rho, &\quad 
&\Omega_2(\rho,\psi)\equiv 
\frac14 \left(-4 \vartheta \rho^2 - s_1 +  \beta_1 \cos 2\psi\right),
\end{align*}
and $\tilde \Lambda_2(\rho,\psi,S,t)=\mathcal O(t^{-1})$, $\tilde \Omega_2(\rho,\psi,S,t)=\mathcal O(t^{-1})$ as $t\to\infty$ uniformly for all $|\rho|<\infty$, $(\psi,S)\in\mathbb R^2$. We see that assumption \eqref{asn} holds with $n=1$ and $m=2$.  

If $\beta_1\neq 0$, $ |\beta_0/\beta_1|<1/2$, then the system satisfies \eqref{aszero} with 
\begin{gather*}
\psi_0=(-1)^k  \theta_0 + \frac{\pi k}{2}, \quad 
\nu_1= (-1)^k\frac{\beta_1}{\sqrt{8\vartheta}}\cos2\theta_0, \quad 
k\in\mathbb Z, \quad 
\theta_0=\frac{1}{2}\arcsin \left(-\frac{2\beta_0}{\beta_1}\right).
\end{gather*}
It follows from Lemma~\ref{Lem01} that if $(-1)^k\beta_1<0$, then the equilibria $(0, (-1)^k\theta_0+\pi k/2)$, $k\in\mathbb Z$ in the limiting system and the corresponding regime in the full system are unstable. Since $d_{n,m}=\partial_\rho \Lambda_1(0,\psi_0)=0$ and $\partial_\rho \Lambda_2(0,\psi_0)+\partial_\psi\Omega_2(0,\psi_0)=\beta_0$, we see that assumption \eqref{asst} holds with $h=2$ and $d_h=\beta_0$. If $(-1)^k\beta_1>0$ and $-|\beta_1|/2<\beta_0<0$, then it follows from Lemma~\ref{Lem2} and Theorem~\ref{Th23} that a stable phase locking occurs in the system such that $r(t)\approx a$ and $\varphi(t)\approx S(t)/2+\psi_0$. From Theorem~\ref{Th24} it follows that if $(-1)^k\beta_1>0$ and $0<\beta_0<|\beta_1|/2$, this regime is unstable. 

If $\beta_1\neq 0$, $|\beta_0/\beta_1|>1/2$ or $\beta_1=0$, $\beta_0\neq 0$, then it follows from Theorem~\ref{Th3} that the asymptotic regime with $r(t)\approx a$ does not occur (see Fig.~\ref{FigEx12}).

\begin{figure}
\centering
{
   \includegraphics[width=0.4\linewidth]{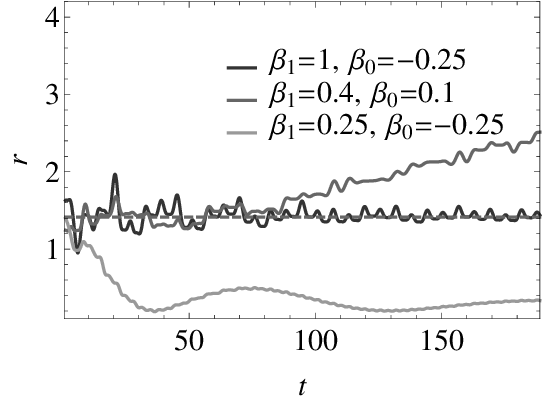}
}
\hspace{1ex}
{
   	\includegraphics[width=0.4\linewidth]{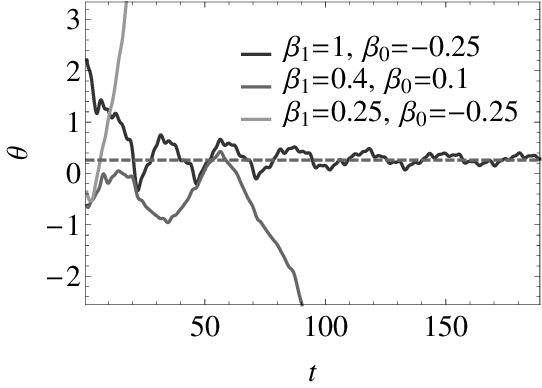}
}
\caption{\small The evolution of $r(t)$ and $\theta(t)\equiv \varphi(t)-S(t)/2$ for solutions to system \eqref{Ex1} with $s_0=1$, $s_1=\vartheta=\mu_1=1/4$, $\mu_0=0$ with different values of the parameters $\beta_0$ and $\beta_1$. The dashed curves correspond to $r(t)\equiv a$ and $\theta(t)\equiv \theta_0$, where $a=\sqrt 2$ and $\theta_0=\pi/12$.} \label{FigEx12}
\end{figure}

3. Finally, let $s_0=1/4$. Then, there exist $\kappa=2$, $\varkappa=1$ and $a=(2\vartheta)^{-1/2}$ such that the resonance condition holds with $\eta=-\sqrt{2\vartheta}<0$. Note that the transformation described in Theorem~\ref{Th1} with $N=1$ reduces system \eqref{Ex1} to 
\begin{align*}
&\frac{d\rho}{dt}=t^{-\frac{1}{4}}\Lambda_1(\rho,\psi)+\tilde \Lambda_1(\rho,\psi,S(t),t), \\
&\frac{d\psi}{dt}=t^{-\frac{1}{4}}\Omega_1(\rho,\psi)+\tilde\Omega_1(\rho,\psi,S(t),t),
\end{align*}
with $\Lambda_1(\rho,\psi)\equiv {a\beta_0}/{2}$, $\Omega_1(\rho,\psi)\equiv -\sqrt{2\vartheta} \rho$, and $\tilde \Lambda_1(\rho,\psi,S,t)=\mathcal O(t^{-1})$, $\tilde \Omega_1(\rho,\psi,S,t)=\mathcal O(t^{-1})$ as $t\to\infty$ uniformly for all $|\rho|<\infty$, $(\psi,S)\in\mathbb R^2$.
It follows from Theorem~\ref{Th3} that if $\beta_0\neq 0$, the asymptotic regime with $r(t)\approx a$ does not occur. In this case, the behaviour of system \eqref{Ex1} is qualitatively independent of the oscillatory part of the perturbations. 

\subsection{Example 2} Consider the following system:
\begin{gather}\label{Ex2}
\begin{split}
&\frac{dr}{dt}=t^{-\frac{1}{2}} f_1(r,\varphi,S(t))+t^{-1} f_2(r,\varphi,S(t)), \\ 
&\frac{d\varphi}{dt}=\omega(r)+t^{-\frac{1}{2}}g_1(r,\varphi,S(t))+t^{-1}g_2(r,\varphi,S(t))
\end{split}
\end{gather}
where  
\begin{align*}
&	f_1(r,\varphi,S)  \equiv -\alpha(S) r^3 \sin\varphi \cos^3\varphi,\quad
&&	f_2(r,\varphi,S)  \equiv \beta(S) r \sin^2\varphi,\\  
&	g_1(r,\varphi,S)  \equiv -\alpha(S) r^2 \cos^4\varphi, \quad 
&&	g_2(r,\varphi,S)  \equiv \frac{\beta(S)}{2}  \sin 2\varphi,\\
&	\alpha(S)\equiv \alpha_0+\alpha_1\sin S,\quad
&&	\beta(S)\equiv \beta_0+\beta_1\sin S,\\
&	\omega(r)\equiv 1-\vartheta r^2, \quad 
&&	S(t) \equiv s_0 t + s_1 t^{\frac{1}{2}}+ s_2 \log t
\end{align*}
with constant parameters $s_k$, $\vartheta>0$, $\alpha_k$, $\beta_k$, $\alpha_1\neq 0$ It can be easily seen that system \eqref{Ex2} has the form \eqref{PS} with $q=2$, $\mathcal R=\vartheta^{-1/2}$, $f(r,\varphi,S(t),t)\equiv t^{-1/2}f_1(r,\varphi,S(t))+t^{-1}f_2(r,\varphi,S(t))$ and $g(r,\varphi,S(t),t)\equiv t^{-1/2}g_1(r,\varphi,S(t))+t^{-1}g_2(r,\varphi,S(t))$. In the Cartesian coordinates $x=r\cos \varphi$, $y=-r\sin\varphi$ this system takes the form
\begin{align*}
&\frac{dx}{dt}=(1-\vartheta (x^2+y^2)) y, \\
& \frac{dy}{dt}=-(1-\vartheta (x^2+y^2))x + t^{-\frac{1}{2}}\alpha(S(t))x^3+ t^{-1}\beta(S(t))y.
\end{align*}
Let $s_0=1/2$. Then, there exist $\kappa=1$, $\varkappa=2$, $a=(2\vartheta)^{-1/2}$ such that the resonance condition \eqref{rc} holds with $\eta=-\sqrt{2\vartheta}<0$. It can easily be checked that the change of variables described in Theorem~\ref{Th1} with $N=4$ transforms the system to
\begin{gather}\begin{split}\label{Ex2LO}
&\frac{d\rho}{dt}=\sum_{i=1}^4 t^{-\frac{i}{4}}\Lambda_i(\rho,\psi)+\tilde \Lambda_4(\rho,\psi,S(t),t), \\
&\frac{d\psi}{dt}=\sum_{i=1}^4 t^{-\frac{i}{4}}\Omega_i(\rho,\psi)+\tilde\Omega_4(\rho,\psi,S(t),t),
\end{split}
\end{gather}
where
\begin{align*}
 \Lambda_1(\rho,\psi) \equiv& -\frac{a^3 \alpha_1}{8} \cos 2 \psi, \\
 \Lambda_2(\rho,\psi) \equiv &-\frac{3a^2 \alpha_1 \rho}{8}  \cos 2 \psi,\\
 \Lambda_3(\rho,\psi) \equiv& 
 \frac{a }{32}\left(16 \beta_0 - 
   \alpha_1 (12 a^4 \alpha_0 + 12 \rho^2 + 5 a^6 \alpha_0 \vartheta) \cos
     2 \psi + 8 \beta_1 \sin 2 \psi + 3 a^4 \alpha_1^2 \sin 4 \psi\right), \\
  \Lambda_4(\rho,\psi)\equiv &
-\frac{\rho}{64} (\alpha_1 (111 a^4 \alpha_0 + 8 \rho^2 + 
      336 a^6 \alpha_0 \vartheta + 432 a^8 \alpha_0 \vartheta^2) \cos 
     2 \psi \\
		& - 
   2 (8 + 16 \beta_0 + 8 \beta_1 \sin 2 \psi + 
      3 a^4 \alpha_1^2 (5 + 16 a^2 \vartheta) \sin 4 \psi)), \\
	\Omega_1(\rho,\psi) \equiv & -\sqrt{2\vartheta} \rho, \\
 \Omega_2(\rho,\psi) \equiv &
\frac{1}{8} 
\left(-3 a^2 \alpha_0 - 2 s_1 - 8 \rho^2 \vartheta + 4 a^2 \alpha_1 \cos \psi \sin \psi\right),\\
  \Omega_3(\rho,\psi) \equiv &
\frac{a\rho}{4} (-3 \alpha_0 + 2 \alpha_1 \sin 2 \psi), \\
  \Omega_4(\rho,\psi) \equiv &
\frac{1}{3456}(-54 (a^4 (57 \alpha_0^2 + 8 \alpha_1^2) + 24 \alpha_0 \rho^2 + 
     32 s_2) - 3 a^6 (3537 \alpha_0^2 + 437 \alpha_1^2) \vartheta \\
		& - 
  16 a^8 (918 \alpha_0^2 + 139 \alpha_1^2) \vartheta^2 + 
  864 \beta_1 \cos 2 \psi \\
	& + 
  54 \alpha_1 (3 a^4 \alpha_1 (3 + 8 a^2 \vartheta) \cos 
       4 \psi + (16 \rho^2 + 
        a^4 \alpha_0 (67 + a^2 \vartheta (173 + 216 a^2 \vartheta))) \sin
       2 \psi))
\end{align*}
and $\tilde \Lambda_4(\rho,\psi,S,t)=\mathcal O(t^{-5/4})$, $\tilde \Omega_4(\rho,\psi,S,t)=\mathcal O(t^{-5/4})$ as $t\to\infty$ uniformly for all $|\rho|<\infty$, $(\psi,S)\in\mathbb R^2$. It is readily seen that assumption \eqref{asn} holds with $n=1$ and $m=2$. 

Note that system \eqref{Ex2LO} satisfies \eqref{aszero} with 
\begin{gather*}
\psi_0=\frac{\pi}{4} + \frac{\pi k}{2}, \quad 
\nu_1= (-1)^k\frac{a^3 \alpha_1}{4}, \quad 
k\in\mathbb Z.
\end{gather*}
Since $\eta<0$, it follows from Lemma~\ref{Lem01} that if $(-1)^k \alpha_1<0$, then the equilibrium $(0, \pi/4+\pi k/2)$ is unstable in the limiting system for all $k\in\mathbb Z$. Hence, the corresponding resonant regimes do not occur in the full system. Moreover, we see that $d_{n,m}=\partial_\rho \Lambda_1(0,\psi_0)=0$, $\partial_\rho \Lambda_i(0,\psi_0)+\partial_\psi\Omega_i(0,\psi_0)=0$ for $1\leq i\leq 3$, and the assumption \eqref{asst} holds with $h=4$ and 
\begin{gather*}
d_h=\frac{1+2\beta_0+(-1)^{k+1}\beta_1}{4}
\end{gather*}
Thus, if $(-1)^k\alpha_1>0$ and $(-1)^k\beta_1>1+2\beta_0$, it follows from Lemma~\ref{Lem2} and Theorem~\ref{Th23} that a stable phase locking occurs in the system such that $r(t)\approx a$ and $\varphi(t)\approx S(t)/2+\psi_0$ (see Fig.~\ref{FigEx21}).

\begin{figure}
\centering
{
   \includegraphics[width=0.4\linewidth]{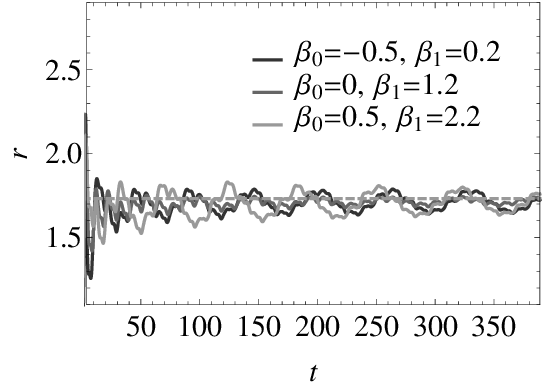}
}
\hspace{1ex}
{
   	\includegraphics[width=0.4\linewidth]{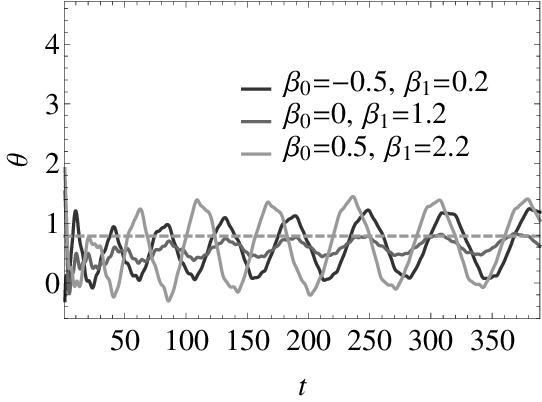}
}
\caption{\small The evolution of $r(t)$ and $\theta(t)\equiv \varphi(t)-S(t)/2$ for solutions to system \eqref{Ex2} with $s_0=1/2$, $s_1=1$, $s_2=0$, $\vartheta=1/4$, $\alpha_0=0.1$, $\alpha_1=0.15$ with different values of the parameters $\beta_0$ and $\beta_1$. The dashed curves correspond to $r(t)\equiv a$ and $\theta(t)\equiv \pi/4$, where $a=\sqrt 2$.} \label{FigEx21}
\end{figure}

\subsection{Example 3} Finally, consider again equation \eqref{Ex0}. It was shown in Section~\ref{sec1} that this system correspond to \eqref{PS} with $q=2$, $s_0=3/2$, and functions $\omega(r)$, $f(r,\varphi,S,t)$, $g(r,\varphi,S,t)$ defined by \eqref{omegaeq} and \eqref{fgex0}. Note that $0<\omega(r)<1$ for all $0<|r|<(2\vartheta)^{-1/2}$ and $\omega(r)=1-3 \vartheta r^2/8-35\vartheta^2 r^4/256+\mathcal O(\vartheta^4)$ as $\vartheta\to 0$. Hence, there exist $\kappa$, $\varkappa\in\mathbb Z_+$ and $0<a<(2\vartheta)^{-1/2}$ such that the condition \eqref{rc} holds with $\eta<0$.

Let $\kappa=1$ and $\varkappa=2$. Then, the transformations \eqref{ch1}, \eqref{ch2} with $N=2$ reduce the system to \eqref{Ex1LO} with
\begin{align*}
&\Lambda_1(\rho,\psi)= \frac{a}{4} (2 \beta_0 +\delta_1 \sin (2\psi-\sigma))+\mathcal O(\vartheta), & \quad 
&\Lambda_2(\rho,\psi)\equiv \frac{\rho}{4} \left(2 \beta_0+\delta_1 \sin (2\psi-\sigma) +\mathcal O(\vartheta)\right),\\
&\Omega_1(\rho,\psi)\equiv  \eta \rho, &\quad 
&\Omega_2(\rho,\psi)\equiv 
\frac 14 (-  2\alpha_0 +  \delta_1 \cos (2 \psi-\sigma))+\mathcal O(\vartheta),
\end{align*}
as $\vartheta\to 0$ and $\tilde \Lambda_2(\rho,\psi,S,t)=\mathcal O(t^{-1})$, $\tilde \Omega_2(\rho,\psi,S,t)=\mathcal O(t^{-1})$ as $t\to\infty$ uniformly for all $|\rho|<\infty$, $(\psi,S)\in\mathbb R^2$, where $\delta_1=\sqrt{\alpha_1^2+\beta_1^2}$ and $\sigma=\arcsin (\alpha_1/\delta_1)$. We see that assumption \eqref{asn} holds with $n=1$ and $m=2$.  

If $\delta_1\neq 0$ and $|\beta_0|<\delta_1/2$, then the system satisfies \eqref{aszero} with 
\begin{gather*}
\psi_0=(-1)^j  \theta_0 + \frac{\sigma+\pi j}{2}+\mathcal O(\vartheta), \quad 
\nu_1= (-1)^j\frac{a \delta_1}{2}\cos2\theta_0+\mathcal O(\vartheta), \quad 
j\in\mathbb Z, \quad 
\theta_0=\frac{1}{2}\arcsin \left(-\frac{2\beta_0}{\delta_1}\right).
\end{gather*}
It follows from Lemma~\ref{Lem01} that the equilibria $(0, (\sigma+\pi)/2-\theta_0+ \pi j)$, $j\in\mathbb Z$ in the limiting system and the corresponding regime in the full system are unstable. Since $d_{n,m}=\partial_\rho \Lambda_1(0,\psi_0)=0$ and $\partial_\rho \Lambda_2(0,\psi_0)+\partial_\psi\Omega_2(0,\psi_0)=\beta_0$, we see that assumption \eqref{asst} holds with $h=2$ and $d_h=\beta_0+\mathcal O(\vartheta)$ as $\vartheta\to 0$. If $-\delta_1/2<\beta_0<0$, then it follows from Lemma~\ref{Lem2} and Theorem~\ref{Th23} that a stable phase locking occurs in the system such that $r(t)\approx a$ and $\varphi(t)\approx S(t)/2+\theta_0+\sigma/2+\pi j$, $j\in\mathbb Z$. From Theorem~\ref{Th24} it follows that if $0<\beta_0<\delta_1/2$, this regime is unstable. 

It follows from Theorem~\ref{Th3} that if $\delta_1=0$, $\beta_0\neq 0$ or $\delta_1\neq 0$, $|\beta_0|>\delta_1/2$, then the asymptotic regime with $r(t)\approx a$ does not occur. 

Note that the root of the equation $\omega(a)=\kappa s_0/\varkappa$ can be found numerically. In particular, if $\vartheta=1/4$, we have $a\approx 1.27$ (see Fig.~\ref{figomega} and Fig.~\ref{FigEx0}, c). 
\begin{figure}
\centering
{
   \includegraphics[width=0.4\linewidth]{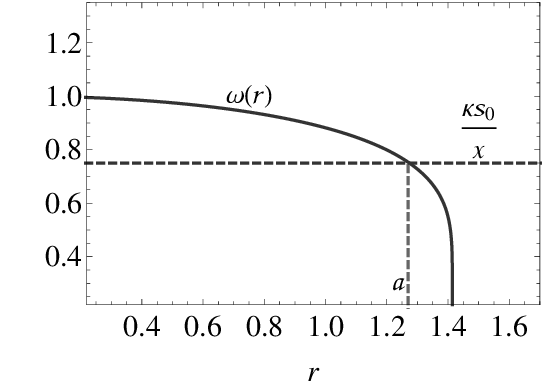}
}
\caption{\small Finding $a$, when $\vartheta=1/4$, $\kappa=1$ and $\varkappa=2$.} \label{figomega}
\end{figure}

\section{Conclusion}
Thus, the resonant effect of damped oscillatory perturbations on non-isochronous systems has been investigated. In particular, we have deduced the model non-autonomous system \eqref{trsys}, which describes the approximate average dynamics. It turned out that this system is similar to the pendulum-type equations with additional terms decaying in time. Indeed, the truncated limiting system \eqref{limsys} can be written as
\begin{gather*}
\frac{d^2\hat\phi}{d\tau^2}-\tau^{-\frac{n-1}{2q}}\eta_n \Lambda_n\left(\eta^{-1}\frac{d\hat\phi}{d\tau},\hat\phi\right)=0, \quad \eta_n=\eta \left(\frac{2q}{2q-1}\right)^{\frac{n-1}{2q-1}}, \quad \tau=\left(\frac{2q}{2q-1}\right)t^{1-\frac{1}{2q}},
\end{gather*}
where $\Lambda_n(\rho,\psi)$ is $2\pi$-periodic with respect to $\psi$. In this case, the additional terms in the model system depends on the perturbations of the oscillatory system. Note that similar but autonomous equations arise in the theory of nonlinear resonance when considering perturbations with a small parameter.~\cite{BVC79,SUZ88}. The study of the structure of the model system has led to conditions that guarantee the existence of the phase-locking regime with a resonant amplitude. Violation of these conditions can lead to significant phase mismatch and the absence of a corresponding resonant mode. The proposed method is based on long-term asymptotic analysis of the model system and the proof of the stability of the corresponding solutions in the full system using Lyapunov function technique. We have shown that time-decaying perturbations can be used to control the dynamics of nonlinear systems. For example, the perturbation parameters can be chosen to ensure the appearance of near-periodic solutions with a given resonant amplitude. 

Note also that perturbations of isochronous systems have not been discussed here. In this case, the proposed theory cannot be applied directly due to different form of the model systems. Multi-frequency systems, where the problem of small denominators may arise, have also not been considered in the paper. These problems deserve special attention and will be discussed elsewhere.

%\section*{Acknowledgements}
%The research is supported by the Russian Science Foundation (Grant No. 23-11-00009).

}

\begin{thebibliography}{99}
\bibitem{GH83} J. Guckenheimer, P. Holmes, \textit{Nonlinear Oscillations, Dynamical Systems and Bifurcations of Vector Fields},  Springer, New York,  1983.

\bibitem{AF06} A. Fidlin, \textit{Nonlinear Oscillations in Mechanical Engineering}, Springer, Berlin, Heidelberg, New York, 2006.

\bibitem {CCT95} C. Castillo-Ch\'{a}vez, H.R. Thieme, \textit{Asymptotically autonomous epidemic models}. In: O. Arino, D. Axelrod, M. Kimmel, M. Langlais (Eds.), Mathematical Population Dynamics: Analysis of Heterogenity, Theory of Epidemics, vol. 1, Wuertz, 1995, p. 33--50.

\bibitem{DS22} D. Scarcella, \textit{Weakly asymptotically quasiperiodic solutions for time-dependent Hamiltonians with a view to celestial mechanics}, 2022, arXiv: 2211.06768.

\bibitem{BG08} A. D. Bruno, I. V. Goryuchkina, \textit{Boutroux asymptotic forms of solutions to Painlev\'{e} equations and power geometry}, Doklady Math., 78 (2008), 681--685.

\bibitem{KF13} V. V. Kozlov, S. D. Furta, \textit{Asymptotic Solutions of Strongly Nonlinear Systems of Differential Equations}, Springer, New York, 2013.
 
\bibitem{SFR19} S. F. Rohmah et al, \textit{Time-dependent damping effect for the dynamics of DNA transcription}, J. Phys.: Conf. Ser., 1204 (2019), 012012.

\bibitem{JM23} S. Ji, M. Mei, \textit{Optimal decay rates of the compressible Euler equations with time-dependent damping in $\mathbb R^n$: (I) under-damping case}, J Nonlinear Sci., 33 (2023), 7.

\bibitem{LM56} L. Markus, \textit{ Asymptotically autonomous differential systems}. In: S. Lefschetz (ed.), Contributions to the theory of nonlinear oscillations III, Ann. Math. Stud., vol. 36, pp. 17--29, Princeton University Press, Princeton, 1956.

\bibitem{LRS02} J. A. Langa, J. C. Robinson, A. Su\'{a}rez, \textit{Stability, instability and bifurcation phenomena in nonautonomous differential equations}, Nonlinearity, 15 (2002), 887--903.

\bibitem{KS05} P. E. Kloeden, S. Siegmund, \textit{Bifurcations and continuous transitions of attractors in autonomous and nonautonomous systems}, Internat. J. Bifur. Chaos., 15 (2005), 743--762.

\bibitem{MR08} M. Rasmussen, \textit{Bifurcations of asymptotically autonomous differential equations}, 
Set-Valued Anal., 16 (2008), 821--849.

\bibitem{OS22Non} O. A. Sultanov, \textit{Stability and bifurcation phenomena in asymptotically Hamiltonian systems}, Nonlinearity, 35 (2022), 2513--2534.

\bibitem{LDP74} L. D. Pustyl'nikov, \textit{Stable and oscillating motions in nonautonomous dynamical systems. A generalization of C. L. Siegel's theorem to the nonautonomous case}, Math. USSR-Sbornik, 23 (1974), 382--404.

\bibitem{HT94} H. Thieme, \textit{Asymptotically autonomous differential equations in the plane}, 
Rocky Mountain J. Math., 24 (1994), 351--380.

\bibitem{OS21IJBC} O. A. Sultanov, \textit{Damped perturbations of systems with center-saddle bifurcation}, Internat. J. Bifur. Chaos., 31 (2021), 2150137.

\bibitem{HL75} W. A. Harris and D. A. Lutz, \textit{Asymptotic integration of adiabatic oscillators}, 
J. Math. Anal. Appl., 51 (1975), 76--93.

\bibitem{MP85} M. Pinto, \textit{Asymptotic integration of second-order linear differential equations}, J. Math. Anal. Appl., 111 (1985), 388--406.

\bibitem{PN07} P. N. Nesterov , \textit{Averaging method in the asymptotic integration problem for systems with oscillatory-decreasing coefficients}, Differ. Equ. 43 (2007), 745--756.

\bibitem{BN10} V. Burd, P. Nesterov, \textit{Parametric resonance in adiabatic oscillators}, Results. Math., 58 (2010), 1--15.

\bibitem{OS23JMS} O. A. Sultanov, \textit{Asymptotic analysis of systems with damped oscillatory perturbations}, J. Math. Sci. 269 (2023), 111--128.

\bibitem{OS21DCDS} O. A. Sultanov, \textit{Bifurcations in asymptotically autonomous Hamiltonian systems under oscillatory perturbations}, Discrete \& Continuous Dynamical Systems, 41 (2021), 5943--5978.

\bibitem{OS21JMS} O. A. Sultanov, \textit{Decaying oscillatory perturbations of Hamiltonian systems in the plane}, Journal of Mathematical Sciences, 257 (2021), 705--719.

\bibitem{BVC79} B. V. Chirikov, \textit{A universal instability of many-dimensional oscillator systems}, Physics Reports, 52 (1979), 263--379.

\bibitem{SUZ88} R. Z. Sagdeev, D.A. Usikov, G.M. Zaslavsky, \textit{Nonlinear Physics: From the Pendulum to Turbulence and Chaos}, Harwood Academic Publishers, New York, 1988.

\bibitem{Sos97} S. M. Soskin, D. G. Luchinsky, R. Mannella, A. B. Neiman, and P. V. E. McClintock, \textit{Zero-dispersion nonlinear resonance}, Internat. J. Bifur. Chaos., 7 (1997), 923--936.

\bibitem{JCRS11} S. Jeyakumari, V. Chinnathambi, S. Rajasekar, and M. A. F. Sanjuan, \textit{Vibrational resonance in an asymmetric Duffing oscillator}, Internat. J. Bifur. Chaos., 21 (2011), 275--286.

\bibitem{BM61} N. N. Bogolubov, Yu. A. Mitropolsky, \textit{Asymptotic Methods in Theory of Non-linear Oscillations}, Gordon and Breach, New York, 1961.

\bibitem{AKN06} V. I. Arnold, V. V. Kozlov, A. I. Neishtadt, \textit{Mathematical Aspects of Classical and Celestial Mechanics}, Springer, Berlin, 2006.

\bibitem{LK15} L. A. Kalyakin, \textit{Lyapunov functions in theorems of justification of asymptotics}, Mat. Notes, 98 (2015), 752--764.

\bibitem{Halil} H. K. Khalil, \textit{Nonlinear Systems}, Prentice Hall, Upper Saddle River, New Jersey, 2002.

\end{thebibliography}
\end{document}